\newcounter{alphthm}
\newtheorem{thm}{Theorem}
\newtheorem{defn}{Definition}
\newtheorem{prop}{Proposition}
\newtheorem{cor}{Corollary}
\newtheorem{lem}{Lemma}
\newcommand{\be}{\begin{equation}}
\newcommand{\ee}{\end{equation}}
\newcommand{\ben}{\begin{enumerate}}
\newcommand{\een}{\end{enumerate}}
\newcommand{\pa}{{\partial}}
\newcommand{\g}{{\bf g}}
\newcommand{\pxi}{{\pa \over \pa x^i}}
\newcommand{\D}{\delta^t}
\def\beq{\begin{equation}}
\def\eeq{\end{equation}}
\title{On Twisted Products Finsler Manifolds}
\author{E. Peyghan, A. Tayebi and L. Nourmohammadi Far}
\begin{document}

%======================================
% http://163.14.246.20/mp/pdf/S13N26.pdf
%=============================================

\maketitle
\begin{abstract}
On the product of two Finsler manifolds $M_1\times M_2$, we consider the twisted metric $F$ which is construct by using
Finsler metrics $F_1$ and $F_2$ on the manifolds $M_1$ and $M_2$,
respectively. We introduce horizontal and vertical distributions on twisted  product Finsler manifold and  study   C-reducible and  semi-C-reducible  properties of this manifold.   Then we obtain the Riemannian curvature and some of non-Riemannian  curvatures of the twisted product Finsler manifold such as  Berwald curvature, mean Berwald curvature and we find the relations between   these objects and their corresponding objects on $M_1$ and $M_2$. Finally, we study locally dually flat twisted product Finsler manifold.\\\\
{\bf {Keywords}}:  Twisted product Finsler manifold, non-Riemannian curvature, locally dually flat.\footnote{ 2010 Mathematics subject Classification: 53C60, 53C25.}
\end{abstract}

\section{Introduction.}
Twisted and warped product structures are widely used in geometry to construct
newexamples of semi-Riemannian manifolds with interesting curvature properties.
(see \cite{BEE}\cite{GM}\cite{Koz}\cite{O}\cite{PTN}). Twisted product metric tensors, as a generalization
of warped product metric tensors, have also been useful in the study of several
aspects of submanifold theory, namely, in hypersurfaces of complex space forms
\cite{LR}, in Lagrangian submanifolds \cite{CDVA} and in decomposition of curvature netted hypersurfaces \cite{K},
etc.

The notion of twisted product of Riemannian manifolds was mentioned first by Chen in \cite{C}, and was generalized for the pseudo-Riemannian case by Ponge and Reckziegel \cite{PR}. Chen extended the study of twisted product for CR-submanifolds in K\"{a}hler manifolds \cite{C1}.

On the other hand, Finsler geometry is a natural extension of  Riemannian geometry without the quadratic restriction. Therefore, it is  natural to extending the construction of  twisted product manifolds for Finsler geometry. In \cite{Koz}, Kozma-Peter-Shimada extended the construction of twisted product for  the Finsler geometry.

Let $(M_1,F_1)$ and $(M_2,F_2)$ are two Finsler manifolds with Finsler metrics $F_1$ and $F_2$, respectively, and
$f:M_1\times M_2\longrightarrow R^+$ be a smooth function. On the product manifold $M_1\times M_2$, we consider the metric
\[
F(v_1,v_2)=\sqrt{F_1^2(v_1)+f^2(x,y)F^2_2(v_2)}
\]
for all $(x,y)\in M_1\times M_2$ and $(v_1,v_2)\in TM^{\circ}_1\times TM^{\circ}_2$, where $TM^{\circ}_1$ is the slit tangent manifold $TM^{\circ}_1=TM_1\setminus \{\circ\}$. The manifold $M_1\times M_2$ endowed with this metric, we call the twisted product of the manifolds $M_1$ and $M_2$ and  denote it by $M_1\times_{f} M_2$. The function $f$ will be called the twisted function.
In particular,  if $f$ is constant on $M_2$, then $M_1\times_{f} M_2$ is called warped product manifold.

Let $(M, F)$ be a Finsler manifold.  The second and third order derivatives of ${1\over 2} F_x^2$ at $y\in T_xM_{0}$ are the symmetric trilinear forms ${\bf g}_y$ and ${\bf C}_y$ on $T_xM$, which called the fundamental tensor and   Cartan torsion, respectively.  A Finsler metric is called semi-C-reducible if its Cartan tensor is given by
\[
C_{ijk}={\frac{p}{1+n}}\{h_{ij}I_k+h_{jk}I_i+h_{ki}J_j\}+\frac{q}{C^2}I_iI_jI_k,
\]
where $p=p(x,y)$ and $q=q(x,y)$ are scalar function on $TM$, $h_{ij}$ is the angular metric and $C^2=I^iI_i$ \cite{Mat4}. If $q=0$, then $F$ is called C-reducible Finsler metric and if $p=0$, then $F$ is called $C2$-like metric.

The geodesic curves of a Finsler metric $F$ on a smooth manifold $M$, are determined  by the system of second order differential equations $ \ddot c^i+2G^i(\dot c)=0$, where the local functions $G^i=G^i(x, y)$ are called the  spray coefficients.  $F$ is  called a Berwald metric, if  $G^i$  are quadratic in $y\in T_xM$  for any $x\in M$. Taking a trace of  Berwald curvature yields mean Berwald curvature ${\bf E}$.  Then $F$ is said to be isotropic mean Berwald metric if ${\bf E}=\frac{n+1}{2}cF^{-1}{\bf h}$, where ${\bf h}=h_{ij} dx^i\otimes dx^j$ is the angular metric and $c=c(x)$ is a scalar function on $M$ \cite{NST2}.

The second variation of geodesics gives rise to a family of linear maps ${\bf R}_y= R^i_{\ k}  dx^k \otimes \pxi|_x :
T_xM \to T_xM$ at any point $y\in T_xM$.  $R_y$ is called the Riemann curvature in the direction $y$. A Finsler metric $F$ is said to be of scalar flag curvature, if for some scalar function ${\bf K}$ on $TM_0$ the Riemann curvature is in the form $R^i_{\ k}= {\bf K}F^2h^i_j$. If ${\bf K}=constant$, then $F$ is said to be of  constant flag curvature.

In this paper, we introduce the horizontal and vertical
distributions on tangent bundle of a doubly warped product Finsler
manifold and construct the Finsler connection on this manifold.
Then, we study some geometric properties of this product manifold
such as C-reducible and semi-C-reducible. Then, we introduce the Riemmanian curvature of twisted product
Finsler manifold $(M_1\times_{f}M_2,F)$ and  find the relation
between it and Riemmanian curvatures of its components $(M_1,F_1)$
and $(M_2,F_2)$. In the cases that $(M_1\times_{f}M_2,F)$ is flat
or it has the scalar flag curvature, we obtain some results on its
components. Then, we study twisted product Finsler metrics with
vanishing Berwald curvature and isotropic mean Berwald curvature,
respectively. Finally, we  study  locally dually flat twisted product Finsler manifold. We prove that there is not exist any locally dually flat proper twisted product Finsler manifold.
%-----------------------------------------------------------------------------------------------------------------------------
\section{Preliminary}
%-----------------------------------------------------------------------------------------------------------------------------
Let $M$ be an $n$-dimensional $ C^\infty$ manifold. Denote by $T_x M $ the tangent space at $x \in M$,  by $TM=\cup _{x \in M} T_x M $ the tangent bundle of $M$, and by $TM^{\circ} = TM \setminus \{ 0 \}$ the slit tangent bundle on $M$ \cite{PH}. A  Finsler metric on $M$ is a function $ F:TM \rightarrow [0,\infty)$ which has the following properties:\\
(i) $F$ is $C^\infty$ on $TM^{\circ}$;\\
(ii) $F$ is positively 1-homogeneous on the fibers of tangent bundle $TM$;\\
(iii) for each $y\in T_xM$, the following quadratic form ${\bf g}_y$ on
$T_xM$  is positive definite,
\[
{\bf g}_{y}(u,v):={1 \over 2} \frac{\partial^2}{\partial s \partial t}\left[  F^2 (y+su+tv)\right]|_{s,t=0}, \ \
u,v\in T_xM.
\]
Let  $x\in M$ and $F_x:=F|_{T_xM}$.  To measure the non-Euclidean feature of $F_x$, define ${\bf C}_y:T_xM\otimes T_xM\otimes T_xM\rightarrow \mathbb{R}$ by
\[
{\bf C}_{y}(u,v,w):={1 \over 2} \frac{d}{dt}\left[{\bf g}_{y+tw}(u,v)
\right]|_{t=0}, \ \ u,v,w\in T_xM.
\]
The family ${\bf C}:=\{{\bf C}_y\}_{y\in TM^{\circ}}$  is called the Cartan torsion. It is well known that ${\bf{C}}=0$ if and only if $F$ is Riemannian \cite{ShDiff}.

For $y\in T_x M^{\circ}$, define  mean Cartan torsion ${\bf I}_y$ by ${\bf I}_y(u):=I_i(y)u^i$, where $I_i:=g^{jk}C_{ijk}$, $C_{ijk}=\frac{1}{2}\frac{\pa g_{ij}}{\pa y^k}$ and $u=u^i\frac{\partial}{\partial x^i}|_x$. By Deicke's  Theorem, $F$ is Riemannian  if and only if ${\bf I}_y=0$.

\bigskip

Let $(M, F)$ be a Finsler manifold. For   $y \in T_xM^{\circ}$, define the  Matsumoto torsion ${\bf M}_y:T_xM\otimes T_xM \otimes T_xM \rightarrow \mathbb{R}$ by ${\bf M}_y(u,v,w):=M_{ijk}(y)u^iv^jw^k$ where
\[
M_{ijk}:=C_{ijk} - {1\over n+1}  \{ I_i h_{jk} + I_j h_{ik} + I_k h_{ij} \},\label{Matsumoto}
\]
$h_{ij}:=FF_{y^iy^j}$ is the angular metric. In \cite{MH}), it is proved that a  Finsler metric $F$ on a manifold $M$ of dimension $n\geq 3$ is a Randers  metric if and only if\  ${\bf M}_y =0$, $\forall y\in TM_0$. A Randers metric $F=\alpha+\beta$  on a manifold $M$ is just a Riemannian metric $\alpha=\sqrt{a_{ij}y^iy^j}$ perturbated by a one form $\beta=b_i(x)y^i$ on $M$ such that  $\|\beta\|_{\alpha}<1$.

A Finsler metric is called semi-C-reducible if its Cartan tensor is given by
\[
C_{ijk}={\frac{p}{1+n}}\{h_{ij}I_k+h_{jk}I_i+h_{ki}I_j\}+\frac{q}{C^2}I_iI_jI_k,
\]
where $p=p(x,y)$ and $q=q(x,y)$ are scalar function on $TM$ and $C^2=I^iI_i$ with $p+q=1$.     In \cite{Mat4}, Matsumoto-Shibata proved that every $(\alpha,\beta)$-metric on a manifold $M$ of dimension $n\geq 3$ is semi-C-reducible.

\bigskip

Given a Finsler manifold $(M,F)$, then a global vector field ${\bf G}$ is induced by $F$ on $TM^{\circ}$, which in a standard coordinate $(x^i,y^i)$ for $TM^{\circ}$ is given by ${\bf G}=y^i {{\partial} \over {\partial x^i}}-2G^i(x,y){{\partial} \over {\partial y^i}}$, where \[
G^i:=\frac{1}{4}g^{il}\Big[\frac{\partial^2 F^2}{\partial x^k
\partial y^l}y^k-\frac{\partial F^2}{\partial x^l}\Big],\ \ \ y\in T_xM.
\]
{\bf G} is called the  spray associated  to $(M,F)$.  In local coordinates, a curve $c(t)$ is a geodesic if and only if its coordinates $(c^i(t))$ satisfy $ \ddot c^i+2G^i(\dot c)=0$ \cite{BCS}.

A Finsler metric $F=F(x,y)$ on a manifold $M$ is said to be locally dually flat if at any point there is a  coordinate system
$(x^i)$ in which the spray coefficients are in the
following form
\[
G^i = -\frac{1}{2}g^{ij}H_{y^j},
\]
where $H=H(x, y)$ is a $C^\infty$ scalar function on $TM^{\circ}$ satisfying $H(x, \lambda y)=\lambda^3H(x, y)$ for all $\lambda > 0$. Such a coordinate system is called an adapted coordinate system. In \cite{shenLec}, Shen proved that  the Finsler metric $F$ on an open subset $U\subset \mathbb{R}^n$ is dually flat if and
only if it satisfies  $(F^2)_{x^ky^l}y^k=2(F^2)_{x^l}$.

\bigskip

For a tangent vector $y \in T_xM^{\circ}$, define ${\bf B}_y:T_xM\otimes T_xM \otimes T_xM\rightarrow T_xM$ and ${\bf E}_y:T_xM \otimes T_xM\rightarrow \mathbb{R}$ by ${\bf B}_y(u, v, w):=B^i_{\ jkl}(y)u^jv^kw^l{{\partial } \over {\partial x^i}}|_x$ and ${\bf E}_y(u,v):=E_{jk}(y)u^jv^k$
where
\[
B^i_{\ jkl}:={{\partial^3 G^i} \over {\partial y^j \partial y^k \partial y^l}},\ \ \ E_{jk}:=\frac{1}{2}B^m_{\ jkm}.
\]
$\bf B$ and $\bf E$ are called the Berwald curvature and mean Berwald curvature, respectively.  Then $F$ is called a Berwald metric and weakly Berwald metric if ${\bf{B}}=0$ and ${\bf{E}}=0$, respectively  \cite{ShDiff}. It is  proved that on  a Berwald space,  the parallel translation along any geodesic preserves the Minkowski functionals \cite{Ich}.

A Finsler metric $F$ is said to be isotropic Berwald metric and isotropic mean Berwald metric if its Berwald curvature and mean Berwald curvature is in the following form, respectively
\begin{eqnarray}\label{IBCurve}
&&B^i_{\ jkl}=c\{F_{y^jy^k}\delta^i_{\ l}+F_{y^ky^l}\delta^i_{\ j}+F_{y^ly^j}\delta^i_{\ k}+F_{y^jy^ky^l}y^i\},\\
&&E_{ij}=\frac{1}{2}(n+1)cF^{-1}h_{ij},
\end{eqnarray}
where  $c=c(x)$ is a  scalar function  on $M$ \cite{ChSh}\cite{TN}.

\bigskip

The Riemann curvature ${\bf R}_y= R^i_{\ k}  dx^k \otimes \pxi|_x :
T_xM \to T_xM$ is a family of linear maps on tangent spaces, defined
by
\begin{equation}\label{TP4}
R^i_{\ k} = 2 {\pa G^i\over \pa x^k}-y^j{\pa^2 G^i\over \pa
x^j\pa y^k} +2G^j {\pa^2 G^i \over \pa y^j \pa y^k} - {\pa G^i \over
\pa y^j} {\pa G^j \over \pa y^k}.
\end{equation}
The flag curvature in Finsler geometry is a natural extension of the sectional curvature in Riemannian geometry was  first introduced by L. Berwald \cite{Be}. For a flag $P={\rm span}\{y, u\} \subset T_xM$ with flagpole $y$, the  flag curvature ${\bf K}={\bf K}(P, y)$ is defined by
\begin{equation}\label{TP5}
{\bf K}(P, y):= {\g_y (u, {\bf R}_y(u)) \over \g_y(y, y) \g_y(u,u)
-\g_y(y, u)^2 }.
\end{equation}
We say that a Finsler metric $F$ is   of scalar curvature if for any $y\in T_xM$, the flag curvature ${\bf K}= {\bf K}(x, y)$ is a scalar function on the slit tangent bundle $TM^{\circ}$. If ${\bf K}=constant$, then $F$ is said to be of  constant flag curvature.

%-----------------------------------------------------------------------------------------------------------------------------

\section{Nonlinear Connection}
Let $(M_1,F_1)$ and $(M_2,F_2)$ be two Finsler manifolds. Then the functions
\begin{equation}
(i)\ g_{ij}(x,y)=\frac{1}{2}\frac{\partial^2F_1^2(x,y)}{\partial y^i\partial y^j},\ \ \ (ii)\ g_{\alpha\beta}(u,v)=\frac{1}{2}\frac{\partial^2F_2^2(u,v)}
{\partial v^\alpha\partial v^\beta},\label{metr}
\end{equation}
define a Finsler tensor field of type $(0,2)$ on $TM^\circ_1$ and $TM^\circ_2$, respectively.
 Now let $(M_1\times{}_{f}M_2,F)$ be a doubly warped Finsler manifold, $\textbf{x}=(x,u)\in M$, $\textbf{y}=(y,v)\in T_\textbf{x}M$, $M=M_1\times M_2$ and $T_\textbf{x}M=T_xM_1\oplus T_uM_2$. Then by using (\ref{metr}) we conclude that
\begin{equation}
\Big(\textbf{g}_{ab}(x,u,y,v)\Big)=\Big(\frac{1}{2}\frac{\partial^2F^2(x,u,y,v)}{\partial \textbf{y}^a\textbf{y}^b}\Big)=\left[
\begin{array}{cc}
g_{ij}&0\\
0&f^2g_{\alpha\beta}
\end{array}
\right],\label{Mat}
\end{equation}
where $\textbf{y}^a=(y^i,v^\alpha)$, $\textbf{g}_{ij}=g_{ij}$,
$\textbf{g}_{\alpha\beta}=f^2g_{\alpha\beta}$,
$\textbf{g}_{i\beta}=\textbf{g}_{\alpha j}=0$, $i,
j,\ldots\in\{1,\ldots,n_1\}$, $\alpha,
\beta,\ldots\in\{1,\ldots,n_2\}$ and $a,
b,\ldots\in\{1,\ldots,n_1+n_2\}$.

Now we consider the the spray coefficients of $F_1$, $F_2$ and $F$ as
\begin{eqnarray}
G^i(x,y)\!\!\!\!&=&\!\!\!\!\frac{1}{4}g^{ih}\Big(\frac{\partial^2F_1^2}{\partial y^h\partial x^j}y^j-\frac{\partial F_1^2}{\partial x^h}\Big)(x,y),\label{spray1}\\
G^{\alpha}(u,v)\!\!\!\!&=&\!\!\!\!\frac{1}{4}g^{\alpha\gamma}\Big(\frac{\partial^2F_2^2}{\partial v^\gamma\partial u^\beta}v^\beta
-\frac{\partial F_2^2}{\partial u^\gamma}\Big)(u,v),\label{spray2}\\
{\textbf{G}}^a(\textbf{x},\textbf{y})\!\!\!\!&=&\!\!\!\!\frac{1}{4}{\textbf{g}}^{ab}\Big(\frac{\partial^2F^2}{\partial \textbf{y}^b\partial \textbf{x}^c}\textbf{y}^c
-\frac{\partial F^2}{\partial \textbf{x}^b}\Big)(\textbf{x},\textbf{y}).\label{spray3}
\end{eqnarray}
Taking into account the homogeneity of both $F_1^2$ and $F_2^2$, and using (\ref{spray1}) and (\ref{spray2}), we can conclude that $G^i$ and $G^\alpha$ are positively homogeneous of degree two with respect to $(y^i)$ and $(v^\alpha)$,  respectively. Hence from Euler theorem for homogeneous functions,  we infer that
\[
\frac{\partial G^i}{\partial y^j}y^j=2G^i, \ \   \textrm{and} \ \ \frac{\partial G^\alpha}{\partial v^\beta}v^\beta=2G^\alpha.
\]
By setting $a=i$ in (\ref{spray3}) we have
\[
\textbf{G}^i(x,u,y,v)=\frac{1}{4}\textbf{g}^{ih}\Big(\frac{\partial^2F^2}{\partial y^h\partial x^j}y^j+\frac{\partial^2F^2}{\partial y^h\partial u^\alpha}v^\alpha-\frac{\partial F^2}{\partial x^h}\Big).
\]
Direct calculations give us
\begin{eqnarray*}
&&\frac{\partial F^2}{\partial
x^h}=\frac{\partial F_1^2}{\partial x^h}+\frac{\partial
f^2}{\partial x^h}F_2^2,\\
&&\frac{\partial^2F^2}{\partial
y^h\partial x^j}=\frac{\partial^2F_1^2}{\partial y^h\partial
x^j}\\
&&\frac{\partial^2F^2}{\partial y^h\partial u^\alpha}=0.
\end{eqnarray*}
Putting these equations together
$\textbf{g}^{ih}=g^{ih}$ in the above equation
and using (\ref{spray1}) imply that
\begin{equation}
\textbf{G}^i(x,u,y,v)=G^i(x,y)-\frac{1}{2}ff^iF_2^2.\label{spray4}
\end{equation}
Similarly, by setting $a=\alpha$ in (\ref{spray3}) and using (\ref{spray2}) we obtain
\begin{equation}
\textbf{G}^\alpha(x,u,y,v)=G^\alpha(u,v)+f^{-1}(f_jv^\alpha y^j+f_\lambda v^\alpha v^\lambda-\frac{1}{2}f_\gamma g^{\alpha\gamma}F_2^2),\label{spray5}
\end{equation}
where $f_i=\frac{\partial f}{\partial x^i}$, $f_\gamma=\frac{\partial f}{\partial u^\gamma}$, $f^i=g^{ih}f_h$ and $f^\gamma=g^{\lambda\gamma}f_\lambda$. Therefore we have $\textbf{G}^a=(\textbf{G}^i,\textbf{G}^\alpha)$, where $\textbf{G}^a$, $\textbf{G}^i$ and $\textbf{G}^\alpha$ are given by (\ref{spray3}), (\ref{spray4}) and (\ref{spray5}), respectively.

Now, we put
\begin{equation}
(i)\ \ \textbf{G}^a_b:=\frac{\partial \textbf{G}^a}{\partial \textbf{y}^b},\ \ \ (ii)\ \ G^i_j:=\frac{\partial G^i}{\partial y^j},\ \ \
(iii)\ \ G^\alpha_\beta:=\frac{\partial G^\alpha}{\partial v^\beta}.\label{spray6}
\end{equation}
Then we have the following.
\begin{lem}
The coefficients $\textbf{G}^a_b$ defined by (\ref{spray6}) satisfy in the following
\begin{equation}
\Big(\textbf{G}^a_b(x,u,y,v)\Big)=\left[
\begin{array}{cc}
\textbf{G}^i_j(x,u,y,v)&\textbf{G}^\alpha_j(x,u,y,v)\\
\textbf{G}^i_\beta(x,u,y,v)&\textbf{G}^\alpha_\beta(x,u,y,v)
\end{array}
\right],
\end{equation}
where
\begin{eqnarray}
\textbf{G}^i_j(x,u,y,v)\!\!\!\!&:=&\!\!\!\!\frac{\partial\textbf{G}^i}{\partial
y^j}=G^i_j+C^{ih}_jff_hF_2^2
,\label{spray7}\\
\textbf{G}^i_\beta(x,u,y,v)\!\!\!\!&:=&\!\!\!\!\frac{\partial\textbf{G}^i}{\partial
v^\beta}=-ff^iv_\beta,\label{spray8}\\
\textbf{G}^\alpha_j(x,u,y,v)\!\!\!\!&:=&\!\!\!\!\frac{\partial\textbf{G}^\alpha}{\partial y^j}=f^{-1}f_jv^\alpha,\label{spray9}\\
\textbf{G}^\alpha_\beta(x,u,y,v)\!\!\!\!&:=&\!\!\!\!\frac{\partial\textbf{G}^\alpha}{\partial
v^\beta}= G^\alpha_\beta+f^{-1}(C^{\alpha\gamma}_\beta f_\gamma F_2^2+f_jy^j\delta^\alpha_\beta-f^\alpha v_\beta\nonumber\\
\!\!\!\!&&\!\!\!\!\hspace{1.2cm}+f_\beta v^\alpha+f_\gamma
v^\gamma\delta^\alpha_\beta).\label{spray10}
\end{eqnarray}
\end{lem}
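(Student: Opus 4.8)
The plan is to prove each of the four block entries by direct differentiation of the explicit spray components obtained in (\ref{spray4}) and (\ref{spray5}), keeping careful track of which variables each factor depends on. The crucial bookkeeping observation is that the twisted function $f$ and its derivatives $f_i=\pa f/\pa x^i$, $f_\gamma=\pa f/\pa u^\gamma$ depend only on the base point $(x,u)$; that $G^i$ and $g_{ij}$ depend only on $(x,y)$; and that $G^\alpha$, $g_{\alpha\beta}$ and $F_2^2$ depend only on $(u,v)$. Two elementary identities will do most of the work: first, differentiating $g^{ih}g_{hk}=\delta^i_k$ and using $C_{ijk}=\tfrac12\pa g_{ij}/\pa y^k$ gives $\pa g^{ih}/\pa y^j=-2C^{ih}_j$, and likewise $\pa g^{\alpha\gamma}/\pa v^\beta=-2C^{\alpha\gamma}_\beta$ for the second factor; second, the $2$-homogeneity of $F_2^2$ yields $\pa F_2^2/\pa v^\beta=2g_{\beta\gamma}v^\gamma=2v_\beta$.

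First I would treat the two entries coming from $\textbf{G}^i=G^i-\tfrac12 ff^iF_2^2$. Differentiating in $y^j$, the term $G^i$ produces $G^i_j$, the factors $f$ and $F_2^2$ are inert, and only $f^i=g^{ih}f_h$ varies through $g^{ih}$; the first identity gives $\pa f^i/\pa y^j=-2C^{ih}_jf_h$, whence the second term contributes $C^{ih}_jff_hF_2^2$, establishing (\ref{spray7}). Differentiating the same expression in $v^\beta$ kills $G^i$ and leaves $-\tfrac12 ff^i\,\pa F_2^2/\pa v^\beta$; the homogeneity identity turns this into $-ff^iv_\beta$, which is (\ref{spray8}).

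Next I would handle the entries coming from $\textbf{G}^\alpha=G^\alpha+f^{-1}(f_jv^\alpha y^j+f_\lambda v^\alpha v^\lambda-\tfrac12 f_\gamma g^{\alpha\gamma}F_2^2)$. The $y^j$-derivative is immediate: $G^\alpha$, the coefficient $f^{-1}$, the metric $g^{\alpha\gamma}$ and $F_2^2$ are all independent of $y$, so the only surviving contribution is $f^{-1}f_jv^\alpha$, giving (\ref{spray9}). The $v^\beta$-derivative is the one genuinely laborious computation and is where the main care is required: $G^\alpha$ yields $G^\alpha_\beta$; the term $f_jv^\alpha y^j$ gives $f_jy^j\delta^\alpha_\beta$; the product rule on $f_\lambda v^\alpha v^\lambda$ produces $f_\beta v^\alpha+f_\gamma v^\gamma\delta^\alpha_\beta$; and the last term requires both identities at once, since $\pa(g^{\alpha\gamma}F_2^2)/\pa v^\beta=-2C^{\alpha\gamma}_\beta F_2^2+2g^{\alpha\gamma}v_\beta$, so that $-\tfrac12 f_\gamma$ times this equals $C^{\alpha\gamma}_\beta f_\gamma F_2^2-f^\alpha v_\beta$ after raising the index via $f^\alpha=g^{\alpha\gamma}f_\gamma$. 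Collecting these terms and multiplying the bracket by $f^{-1}$ reproduces (\ref{spray10}).

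The only real obstacle is organizational rather than conceptual: one must resist differentiating factors across the two disjoint sets of fiber variables $(y^i)$ and $(v^\alpha)$, and must apply the inverse-metric derivative identity correctly where the Cartan tensors $C^{ih}_j$ and $C^{\alpha\gamma}_\beta$ appear. Once the dependence structure is fixed, every entry follows from a one- or two-line computation, and the asserted block form of $(\textbf{G}^a_b)$ is obtained by assembling $\textbf{G}^i_j$, $\textbf{G}^i_\beta$, $\textbf{G}^\alpha_j$ and $\textbf{G}^\alpha_\beta$ into the stated $2\times 2$ arrangement.
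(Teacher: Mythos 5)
Your proof is correct: all four block entries follow exactly as you compute them, and the two identities you isolate ($\partial g^{ih}/\partial y^j=-2C^{ih}_j$ and $\partial F_2^2/\partial v^\beta=2v_\beta$) are precisely what is needed to produce the Cartan-tensor terms and the $-ff^iv_\beta$, $-f^\alpha v_\beta$ terms. The paper states this lemma without proof, treating it as exactly this direct differentiation of (\ref{spray4}) and (\ref{spray5}), so your argument coincides with the paper's intended one.
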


\bigskip

Next, $VTM^{\circ}$ kernel of the differential of the projection map
\[
\pi:=(\pi_1, \pi_2):TM_1^{\circ}\oplus TM_2^{\circ}\rightarrow M_1\times M_2,
\]
which is a well-defined subbundle of $TTM^\circ$, is considered. Locally, $\Gamma(VTM^{\circ})$ is spanned by the natural vector fields $\{\frac{\partial}{\partial y^1},\ldots, \frac{\partial}{\partial y^{n_1}}, \frac{\partial}{\partial v^1},\ldots, \frac{\partial}{\partial v^{n_2}}\}$ and it is called the \textit{twisted vertical distribution} on $TM^\circ$. Then, using the functions given by (\ref{spray7})-(\ref{spray10}), the nonholonomic vector fields are defined as following
\begin{eqnarray}
\frac{\D}{\D x^i}:\!\!\!\!&=&\!\!\!\!\frac{\partial}{\partial x^i}-\textbf{G}^j_i\frac{\partial}{\partial y^j}-\textbf{G}^{\beta}_i\frac{\partial}{\partial v^{\beta}},\label{dec1}\\
\frac{\D}{\D u^\alpha}:\!\!\!\!&=&\!\!\!\!\frac{\partial}{\partial
u^\alpha}-\textbf{G}^j_\alpha\frac{\partial}{\partial
y^j}-\textbf{G}_\alpha^{\beta}\frac{\partial}{\partial v^{\beta}},
\end{eqnarray}
which make it possible to construct a complementary vector subbundle $HTM^{\circ}$ to $VTM^{\circ}$ in $TTM^{\circ}$ as follows
\[
HTM^{\circ}:=span\{\frac{\D}{\D x^1},\ldots,\frac{\D}{\D x^{n_1}}, \frac{\D}{\D u^1},\ldots, \frac{\D}{\D u^{n_2}}\}.
\]
$HTM^{\circ}$ is called the \textit{twisted horizontal distribution} on $TM^{\circ}$. Thus the tangent bundle of $TM^\circ$ admits the decomposition
\begin{equation}
TTM^\circ=HTM^\circ\oplus VTM^\circ.\label{dec}
\end{equation}
It is shown that $\textbf{G}:=(\textbf{G}^a_b)$ is a nonlinear connection on $TM=TM_1\oplus TM_2$. In the following, we compute the non-linear connection of a  twisted product Finsler manifold.
\begin{prop}
If $(M_1\times{}_{f}M_2,F)$ is a twisted product Finsler manifold, then $\textbf{G}=(\textbf{G}^a_b)$ is the nonlinear connection on $TM$. Further, we have
\begin{eqnarray*}
&&\frac{\partial \textbf{G}^i_j}{\partial y^k}y^k+\frac{\partial \textbf{G}^i_j}{\partial v^\gamma}v^\gamma=\textbf{G}^i_j,\\
&&\frac{\partial \textbf{G}^i_\beta}{\partial y^k}y^k+\frac{\partial \textbf{G}^i_\beta}{\partial v^\gamma}v^\gamma=\textbf{G}^i_\beta,
\\
&&\frac{\partial \textbf{G}^\alpha_j}{\partial y^k}y^k+\frac{\partial \textbf{G}^\alpha_j}{\partial v^\gamma}v^\gamma=\textbf{G}^\alpha_j,\\
&&\frac{\partial \textbf{G}^\alpha_\beta}{\partial y^k}y^k+\frac{\partial \textbf{G}^\alpha_\beta}{\partial v^\gamma}v^\gamma=\textbf{G}^\alpha_\beta.
\end{eqnarray*}
\end{prop}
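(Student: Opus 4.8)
The plan is to recognize that all four displayed identities are nothing but the Euler relation for functions that are positively homogeneous of degree one in the joint fibre variable $\textbf{y}=(y^k,v^\gamma)$. Concretely, I would first show that the spray coefficients $\textbf{G}^a$ are positively homogeneous of degree two in $\textbf{y}$; then each $\textbf{G}^a_b=\partial\textbf{G}^a/\partial\textbf{y}^b$ is positively homogeneous of degree one, and differentiating the degree-two Euler relation reproduces exactly the four equations, block by block.

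For the homogeneity chain, note that $F^2=F_1^2+f^2F_2^2$, where $f=f(x,u)$ does not depend on the fibre coordinates, while $F_1^2$ is homogeneous of degree two in $y$ and $F_2^2$ is homogeneous of degree two in $v$. Under the joint scaling $(y,v)\mapsto(\lambda y,\lambda v)$ with $\lambda>0$ one gets $F^2(\lambda y,\lambda v)=\lambda^2F^2(y,v)$, so $F^2$ is positively homogeneous of degree two in $\textbf{y}$. Since $\textbf{g}_{ab}=\tfrac12\,\partial^2F^2/\partial\textbf{y}^a\partial\textbf{y}^b$ is then homogeneous of degree zero, and hence so is $\textbf{g}^{ab}$, the formula (\ref{spray3}) expresses $\textbf{G}^a$ as a degree-zero factor times the degree-two bracket $\big(\partial^2F^2/\partial\textbf{y}^b\partial\textbf{x}^c\big)\textbf{y}^c-\partial F^2/\partial\textbf{x}^b$; thus $\textbf{G}^a$ is positively homogeneous of degree two and $\textbf{G}^a_b$ of degree one in $\textbf{y}$. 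The assertion that $\textbf{G}=(\textbf{G}^a_b)$ is a nonlinear connection is then immediate, since the $\textbf{G}^a_b$ are the fibre-derivatives of the genuine spray $\textbf{G}^a$ built in (\ref{spray3}) and so obey the required transformation law under change of coordinates — this is exactly the fact already used to produce the splitting (\ref{dec}).

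With degree-one homogeneity in hand, the four identities follow at once from Euler's theorem. Differentiating $\textbf{y}^b\,\partial\textbf{G}^a/\partial\textbf{y}^b=2\textbf{G}^a$ with respect to $\textbf{y}^c$ gives $\textbf{y}^b\,\partial\textbf{G}^a_c/\partial\textbf{y}^b=\textbf{G}^a_c$; writing $\textbf{y}^b=(y^k,v^\gamma)$ and letting the index pair $(a,c)$ run over the four blocks $(i,j)$, $(i,\beta)$, $(\alpha,j)$, $(\alpha,\beta)$ yields precisely the displayed equations. Alternatively one may check each line directly from (\ref{spray7})--(\ref{spray10}): for instance $\textbf{G}^i_\beta=-ff^iv_\beta$ has $f^i=g^{ih}f_h$ of $y$-degree zero and $v_\beta$ of $v$-degree one, so $y^k\partial_{y^k}(-ff^iv_\beta)=0$ while $v^\gamma\partial_{v^\gamma}(-ff^iv_\beta)=-ff^iv_\beta$, reproducing the identity. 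I expect the only delicate point to be the bookkeeping of degrees in the mixed terms, the representative one being $C^{ih}_jff_hF_2^2$ inside $\textbf{G}^i_j$: here $C^{ih}_j$ is homogeneous of degree $-1$ in $y$ while $F_2^2$ is homogeneous of degree $2$ in $v$, and it is exactly the cancellation $-1+2=1$ under the joint scaling that makes the term, and hence $\textbf{G}^i_j=G^i_j+C^{ih}_jff_hF_2^2$, homogeneous of degree one, precisely as the clean Euler argument predicts.
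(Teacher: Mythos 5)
Your proof is correct and complete. Note that the paper itself offers no proof of this proposition at all --- it is stated bare, immediately after the unproved remark that $\textbf{G}=(\textbf{G}^a_b)$ ``is shown'' to be a nonlinear connection --- so your argument fills a genuine omission rather than paralleling an existing proof. Your route is also the natural extension of what the paper does for the factors: just before the proposition the authors invoke Euler's theorem to get $\frac{\partial G^i}{\partial y^j}y^j=2G^i$ and $\frac{\partial G^\alpha}{\partial v^\beta}v^\beta=2G^\alpha$ from the separate homogeneity of $F_1^2$ in $y$ and $F_2^2$ in $v$; your observation is that, since $f=f(x,u)$ carries no fibre dependence, $F^2=F_1^2+f^2F_2^2$ is positively homogeneous of degree two in the joint variable $\textbf{y}=(y,v)$, so the same Euler mechanism applies to the full spray $\textbf{G}^a$ of (\ref{spray3}), and differentiating $\textbf{y}^b\,\partial\textbf{G}^a/\partial\textbf{y}^b=2\textbf{G}^a$ once in $\textbf{y}^c$ yields all four identities uniformly, block by block. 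The alternative you sketch --- checking each block directly from (\ref{spray7})--(\ref{spray10}) --- is presumably what the authors had in mind, since they record exactly those formulas, but it is longer and hides why the mixed terms work; your degree bookkeeping on the representative term $C^{ih}_jff_hF_2^2$ (joint degree $-1+2=1$) is exactly the point. The only step worth making explicit is that the joint scaling $(y,v)\mapsto(\lambda y,\lambda v)$, $\lambda>0$, preserves the domain $TM_1^{\circ}\times TM_2^{\circ}$ on which $F$ is defined and smooth, so Euler's theorem applies legitimately to $\textbf{G}^a$ and to its fibre derivatives $\textbf{G}^a_b$.
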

\begin{defn}
Using decomposition (\ref{dec}), the twisted vertical morphism $v^t:TTM^\circ\rightarrow VTM^\circ$ is defined by \[
v^t:=\frac{\partial}{\partial y^i}\otimes \delta^ty^i+\frac{\partial}{\partial v^\alpha}\otimes \delta^tv^\alpha,
\]
where
\begin{eqnarray}
&&\delta^ty^i:=dy^i+\textbf{G}^i_jdx^j+\textbf{G}^i_\beta du^\beta,\\ &&\delta^tv^\alpha:=dv^\alpha+\textbf{G}^\alpha_jdx^j+\textbf{G}^\alpha_\beta du^\beta.\label{new}
\end{eqnarray}
\end{defn}
For this projective morphism, the following hold
\[
v^t(\frac{\partial}{\partial y^i})=\frac{\partial}{\partial y^i},\ \ \ v^t(\frac{\partial}{\partial v^\alpha})=\frac{\partial}{\partial v^\alpha},\ \ \ v^t(\frac{\D}{\D x^i})=0,\ \ \ v^t(\frac{\D}{\D u^i})=0.
\]
From the above equations, we conclude that
\[
(v^t)^2=v^t, \ \ \  \textrm{and} \ \ \ \ker(v^t)=HTM^\circ.
\]
This mapping is called the \textit{twisted vertical projective}.
\begin{defn}
Using decomposition (\ref{dec}), the \textit{doubly warped horizontal projective} $h^t:TTM^\circ\rightarrow HTM^\circ$ is defined by \[
h^t=id-v^t
\]
or
\[
h^t=\frac{\D}{\D x^i}\otimes dx^i+\frac{\D}{\D u^\alpha}\otimes du^\alpha.
\]
\end{defn}
For this projective morphism, the following hold
\[
h^t(\frac{\D}{\D x^i})=\frac{\D}{\D x^i},\ \ \ h^t(\frac{\D}{\D u^\alpha})=\frac{\D}{\D u^\alpha},\ \ \ h^t(\frac{\partial}{\partial y^i})=0,\ \ \ h^t(\frac{\partial}{\partial v^\alpha})=0.
\]
Thus we result that
\[
(h^t)^2=h^t, \ \ \  \textrm{and} \ \ \ \ker(h^t)=VTM^\circ.
\]
\begin{defn}
Using decomposition (\ref{dec}), the twisted almost tangent structure $J^t: HTM^\circ\rightarrow VTM^\circ$ is defined by
\[
J^t: \frac{\partial}{\partial y^i}\otimes dx^i+\frac{\partial}{\partial v^\alpha}\otimes du^\alpha,
\]
or
\[
J^t(\frac{\D}{\D x^i})=\frac{\partial}{\partial y^i},\ \ \ J^t(\frac{\D}{\D u^\alpha})=\frac{\partial}{\partial v^\alpha}, \ \ \ J^t(\frac{\partial}{\partial y^i})=J^t(\frac{\partial}{\partial v^\alpha})=0.
\]
\end{defn}
Thus we result that
\[
(J^t)^2=0, \ \ \  \textrm{and}  \ \ \ \ker J^t=Im J^t=VTM^\circ.
\]
Here, we introduce some geometrical objects of twisted product Finsler manifold. In order to simplify the equations, we rewritten the basis of $HTM^\circ$ and $VTM^\circ$ as follows:
\begin{eqnarray*}
&&\frac{\D}{\D \textbf{x}^a}=\frac{\D}{\D x^i}\delta_a^i+\frac{\D}{\D u^\alpha}\delta_a^\alpha,\\
&&\frac{\partial}{\partial \textbf{y}^a}=\frac{\partial}{\partial y^i}\delta_a^i+\frac{\partial}{\partial v^\alpha}\delta_a^\alpha.
\end{eqnarray*}
Thus
\[
TTM^\circ=span\{\frac{\D}{\D \textbf{x}^a},\frac{\partial}{\partial \textbf{y}^a}\}.
\]
The Lie brackets of this basis is given by
\begin{eqnarray}
&&[\frac{\D}{\D \textbf{x}^a}, \frac{\D}{\D \textbf{x}^b}]=\textbf{R}^c_{\ ab}\frac{\partial}{\partial \textbf{y}^c},\\
&&[\frac{\D}{\D \textbf{x}^a}, \frac{\partial}{\partial \textbf{y}^b}]=\textbf{G}^c_{ab}\frac{\partial}{\partial \textbf{y}^c},\\ &&[\frac{\partial}{\partial \textbf{y}^a}, \frac{\partial}{\partial \textbf{y}^b}]=0,
\end{eqnarray}
where
\begin{eqnarray}
&&(i)\ \ \textbf{R}^c_{\ ab}=\frac{\D \textbf{G}^c_a}{\D \textbf{x}^b}-\frac{\D \textbf{G}^c_b}{\D \textbf{x}^a},\\
&&(ii)\ \ \textbf{G}^c_{\ ab}=\frac{\partial \textbf{G}^c_a}{\partial \textbf{y}^b}.\label{p14}
\end{eqnarray}

\bigskip
Therefore, we have the following.
\begin{cor}
Let $(M_1\times{}_{f}M_2,F)$ be a twisted product
Finsler manifold.  Then
\[
\textbf{R}^c_{\ ab}=(\textbf{R}^k_{\ ij}, \textbf{R}^k_{\ i\beta}, \textbf{R}^k_{\ \alpha j}, \textbf{R}^k_{\ \alpha \beta}, \textbf{R}^\gamma_{\ ij}, \textbf{R}^\gamma_{\ i\beta}, \textbf{R}^\gamma_{\ \alpha j}, \textbf{R}^\gamma_{\ \alpha\beta})
\]
where
\[
\textbf{R}^k_{\ ij}:=\frac{\D \textbf{G}^k_i}{\D x^j}-\frac{\D \textbf{G}^k_j}{\D x^i},\ \ \ \textbf{R}^k_{\ i\beta}:=\frac{\D \textbf{G}^k_i}{\D u^\beta}-\frac{\D \textbf{G}^k_\beta}{\D x^i},
\]
\[
\textbf{R}^k_{\ \alpha j}:=\frac{\D \textbf{G}^k_\alpha}{\D x^j}-\frac{\D \textbf{G}^k_j}{\D u^\alpha},\ \ \ \textbf{R}^k_{\ \alpha\beta}:=\frac{\D \textbf{G}^k_\alpha}{\D u^\beta}-\frac{\D \textbf{G}^k_\beta}{\D u^\alpha},
\]
\[
\textbf{R}^\gamma_{\ ij}:=\frac{\D \textbf{G}^\gamma_i}{\D x^j}-\frac{\D \textbf{G}^\gamma_j}{\D x^i},\ \ \ \textbf{R}^\gamma_{\ i\beta}:=\frac{\D \textbf{G}^\gamma_i}{\D u^\beta}-\frac{\D \textbf{G}^\gamma_\beta}{\D x^i},
\]
\[
\textbf{R}^\gamma_{\ \alpha j}:=\frac{\D \textbf{G}^\gamma_\alpha}{\D x^j}-\frac{\D \textbf{G}^\gamma_j}{\D u^\alpha},\ \ \ \textbf{R}^\gamma_{\ \alpha\beta}:=\frac{\D \textbf{G}^\gamma_\alpha}{\D u^\beta}-\frac{\D \textbf{G}^\gamma_\beta}{\D u^\alpha},
\]
\end{cor}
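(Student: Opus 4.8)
The plan is to read off the eight components directly from the defining relation
$\textbf{R}^c_{\ ab}=\frac{\D \textbf{G}^c_a}{\D \textbf{x}^b}-\frac{\D \textbf{G}^c_b}{\D \textbf{x}^a}$,
exploiting the fact that on the product manifold every index splits into an $M_1$-part (Latin) and an $M_2$-part (Greek). Concretely, under the decomposition $T_{\textbf{x}}M=T_xM_1\oplus T_uM_2$ each of the three free indices $a,b,c$ ranges independently over its Latin alternative ($i,j,k$) and its Greek alternative ($\alpha,\beta,\gamma$), and the nonholonomic frame obeys $\frac{\D}{\D \textbf{x}^a}=\delta_a^i\frac{\D}{\D x^i}+\delta_a^\alpha\frac{\D}{\D u^\alpha}$, while dually $\frac{\partial}{\partial \textbf{y}^c}=\delta_c^k\frac{\partial}{\partial y^k}+\delta_c^\gamma\frac{\partial}{\partial v^\gamma}$. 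Thus the whole statement is a matter of substituting these splittings into the single curvature formula.

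First I would record the combinatorics: the superscript $c$ contributes two possibilities, $c=k$ or $c=\gamma$, while the ordered pair of subscripts $(a,b)$ contributes the four type-combinations $(i,j)$, $(i,\beta)$, $(\alpha,j)$, $(\alpha,\beta)$, giving exactly the $2\times 4=8$ components displayed. Substituting each choice into the defining relation and replacing $\frac{\D}{\D \textbf{x}^a}$ and $\frac{\D}{\D \textbf{x}^b}$ by the appropriate $\frac{\D}{\D x^i}$ or $\frac{\D}{\D u^\alpha}$ then yields the corresponding formula: for example $c=k$, $a=i$, $b=j$ forces both base derivatives to be horizontal $M_1$-derivatives, producing $\textbf{R}^k_{\ ij}=\frac{\D \textbf{G}^k_i}{\D x^j}-\frac{\D \textbf{G}^k_j}{\D x^i}$, whereas $c=k$, $a=i$, $b=\beta$ replaces only the $b$-derivative by $\frac{\D}{\D u^\beta}$, giving $\textbf{R}^k_{\ i\beta}=\frac{\D \textbf{G}^k_i}{\D u^\beta}-\frac{\D \textbf{G}^k_\beta}{\D x^i}$; the remaining six cases are obtained by the identical substitution.

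To make the derivation self-contained I would, if desired, first re-establish the defining relation itself by expanding the Lie bracket $[\frac{\D}{\D \textbf{x}^a},\frac{\D}{\D \textbf{x}^b}]$ from $\frac{\D}{\D \textbf{x}^a}=\frac{\partial}{\partial \textbf{x}^a}-\textbf{G}^c_a\frac{\partial}{\partial \textbf{y}^c}$: the pure base mixed partials cancel and the vertical terms collect into $\bigl(\frac{\D \textbf{G}^c_a}{\D \textbf{x}^b}-\frac{\D \textbf{G}^c_b}{\D \textbf{x}^a}\bigr)\frac{\partial}{\partial \textbf{y}^c}$, which confirms that this bracket coefficient is indeed $\textbf{R}^c_{\ ab}$. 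There is no genuine analytic difficulty in the corollary; it is purely an exercise in index bookkeeping. The only point demanding care is the antisymmetry $\textbf{R}^c_{\ ab}=-\textbf{R}^c_{\ ba}$, which links the mixed families $\textbf{R}^\bullet_{\ i\beta}$ and $\textbf{R}^\bullet_{\ \alpha j}$ and must be tracked so that the signs across the four mixed components are assigned consistently; once this is observed, every component follows by direct substitution.
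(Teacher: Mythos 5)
Your proposal is correct and matches the paper's own treatment: the paper presents this corollary as an immediate consequence of the definition $\textbf{R}^c_{\ ab}=\frac{\D \textbf{G}^c_a}{\D \textbf{x}^b}-\frac{\D \textbf{G}^c_b}{\D \textbf{x}^a}$, obtained exactly by splitting each index into its Latin ($M_1$) and Greek ($M_2$) alternatives, which is what you do. Your optional re-derivation of that defining relation from the Lie bracket of the nonholonomic frame is also consistent with the paper's setup and adds nothing that conflicts with it.
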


\bigskip
With a simple calculation, we have the following.

\begin{cor}
Let $(M_1\times{}_{f}M_2,F)$ be a twisted product
Finsler manifold.  Then
\[
\textbf{G}^c_{ab}=(\textbf{G}^k_{ij}, \textbf{G}^k_{i\beta}, \textbf{G}^k_{\alpha j}, \textbf{G}^k_{\alpha \beta}, \textbf{G}^\gamma_{ij}, \textbf{G}^\gamma_{i\beta}, \textbf{G}^\gamma_{\alpha j}, \textbf{G}^\gamma_{\alpha\beta})
\]
where
\begin{eqnarray*}
\textbf{G}^\gamma_{\alpha\beta}\!\!\!\!&=&\!\!\!\!\frac{\partial\textbf{G}^\gamma_\alpha}{\partial
v^\beta}=G^\gamma_{\alpha\beta}+f^{-1}(C^{\gamma\lambda}_{\alpha;\beta}f_\lambda F_2^2+2C^{\gamma\lambda}_\alpha f_\lambda v_\beta+2C^{\gamma\lambda}_\beta f_\lambda v_\alpha\\
\!\!\!\!&&\!\!\!\!\hspace{1.2cm}-f^\gamma g_{\alpha\beta}+f_\beta\delta^\gamma_\alpha+f_\alpha\delta^\gamma_\beta)=\textbf{G}^\gamma_{\beta\alpha},\\
\textbf{G}^k_{ij}\!\!\!\!&=&\!\!\!\!\frac{\partial\textbf{G}^k_i}{\partial
y^j}=G^k_{ij}+C^{kh}_{i;j}ff_hF^2_2
=\textbf{G}^k_{ji},\\
\textbf{G}^k_{i\beta}\!\!\!\!&=&\!\!\!\!\frac{\partial\textbf{G}^k_i}{\partial v^\beta}=2C^{kh}_iff_hv_\beta
=\textbf{G}^k_{\beta i},\\
\textbf{G}^k_{\alpha\beta}\!\!\!\!&=&\!\!\!\!\frac{\partial\textbf{G}^k_\alpha}{\partial v^\beta}=-ff^kg_{\alpha\beta}=\textbf{G}^k_{\beta\alpha},\\
\textbf{G}^\gamma_{i\beta}\!\!\!\!&=&\!\!\!\!\frac{\partial\textbf{G}^\gamma_i}{\partial v^\beta}=f^{-1}f_i\delta^\gamma_\beta=\textbf{G}^\gamma_{\beta i},\\
\textbf{G}^\gamma_{ij}\!\!\!\!&=&\!\!\!\!\frac{\partial
\textbf{G}_i^\gamma}{\partial y^j}=\textbf{G}^\gamma_{ji}=0.
\end{eqnarray*}
where $C^{kh}_{i;j}=\frac{\partial C^{kh}_i}{\partial y^j}$. Apart from $\textbf{G}^c_{ab}$, the functions $\textbf{F}^c_{ab}$ given by
\begin{equation}
\textbf{F}^c_{ab}=\frac{1}{2}\textbf{g}^{ce}\Big(\frac{\D
\textbf{g}_{ea}}{\D \textbf{x}^b}+\frac{\D \textbf{g}_{eb}}{\D
\textbf{x}^a}-\frac{\D \textbf{g}_{ab}}{\D
\textbf{x}^e}\Big)\label{hor}
\end{equation}
\end{cor}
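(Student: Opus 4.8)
\section*{Proof proposal}

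The plan is to prove the Corollary by direct differentiation, starting from the definition $\textbf{G}^c_{ab} = \partial \textbf{G}^c_a/\partial\textbf{y}^b$ recorded in (\ref{p14}) and substituting the explicit expressions (\ref{spray7})--(\ref{spray10}) for the connection coefficients $\textbf{G}^c_a$ obtained in the preceding Lemma. Since $\textbf{y}^b = (y^j, v^\beta)$, the operator $\partial/\partial\textbf{y}^b$ reduces in each block to either $\partial/\partial y^j$ or $\partial/\partial v^\beta$, so the claim splits into the eight index types listed, several of which will collapse to a single term or vanish.

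Before differentiating I would record the elementary facts that drive every computation. First, the twisted function $f$ and its position-derivatives $f_i = \partial f/\partial x^i$, $f_\gamma = \partial f/\partial u^\gamma$ depend only on the base point $(x,u)$ and are therefore annihilated by both $\partial/\partial y^j$ and $\partial/\partial v^\beta$. Second, $\partial F_2^2/\partial v^\beta = 2v_\beta$ and, because the Cartan tensor of $F_2$ satisfies $v^\lambda C_{\lambda\alpha\beta}=0$, the lowering operation obeys $\partial v_\alpha/\partial v^\beta = g_{\alpha\beta}$. Third, differentiating the inverse fundamental tensor of $F_2$ gives $\partial g^{\lambda\gamma}/\partial v^\beta = -2C^{\lambda\gamma}_\beta$; as a consequence $f^i = g^{ih}f_h$ is fiber-independent (it involves only the $F_1$-metric) while $f^\gamma = g^{\lambda\gamma}f_\lambda$ is not. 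I would also fix the abbreviations $G^k_{ij} = \partial G^k_i/\partial y^j$, $G^\gamma_{\alpha\beta}=\partial G^\gamma_\alpha/\partial v^\beta$, $C^{kh}_{i;j} = \partial C^{kh}_i/\partial y^j$ and $C^{\gamma\lambda}_{\alpha;\beta} = \partial C^{\gamma\lambda}_\alpha/\partial v^\beta$.

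With these in hand the six elementary blocks follow by inspection. Differentiating (\ref{spray7}) in $y^j$ leaves the purely $y$-dependent $G^k_i$ to give $G^k_{ij}$ and sends $C^{kh}_i ff_h F_2^2$ to $C^{kh}_{i;j}ff_h F_2^2$; the same quantity differentiated in $v^\beta$ kills $G^k_i$ and $C^{kh}_i$ and only strikes $F_2^2$, producing $2C^{kh}_i ff_h v_\beta$. The block $\textbf{G}^k_{\alpha\beta}$ comes from (\ref{spray8}) via $\partial v_\alpha/\partial v^\beta = g_{\alpha\beta}$, giving $-ff^k g_{\alpha\beta}$; and (\ref{spray9}) yields $\textbf{G}^\gamma_{i\beta} = f^{-1}f_i\delta^\gamma_\beta$ together with $\textbf{G}^\gamma_{ij}=0$, since $f^{-1}f_i v^\gamma$ carries no $y$-dependence. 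The symmetry in the lower index pair of each coefficient, e.g. $\textbf{G}^k_{ij} = \textbf{G}^k_{ji}$, is then immediate and serves as a consistency check.

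The one genuinely laborious block is $\textbf{G}^\gamma_{\alpha\beta} = \partial\textbf{G}^\gamma_\alpha/\partial v^\beta$ obtained from (\ref{spray10}), and this is where I expect the main bookkeeping to lie. Here the $v$-dependence enters through four distinct channels: the factor $F_2^2$ in the first term (contributing $2C^{\gamma\lambda}_\alpha f_\lambda v_\beta$ alongside the Cartan derivative $C^{\gamma\lambda}_{\alpha;\beta}f_\lambda F_2^2$), the raised index $f^\gamma$ in $-f^\gamma v_\alpha$ (whose derivative via $\partial g^{\lambda\gamma}/\partial v^\beta = -2C^{\lambda\gamma}_\beta$ furnishes the second Cartan term $2C^{\gamma\lambda}_\beta f_\lambda v_\alpha$, while the lowered $v_\alpha$ gives $-f^\gamma g_{\alpha\beta}$), and the two linear terms $f_\alpha v^\gamma$ and $f_\lambda v^\lambda\delta^\gamma_\alpha$ (yielding $f_\alpha\delta^\gamma_\beta$ and $f_\beta\delta^\gamma_\alpha$); the term $f_j y^j\delta^\gamma_\alpha$ is $v$-independent and drops out. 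Collecting these contributions and observing that the two Cartan pieces $2C^{\gamma\lambda}_\alpha f_\lambda v_\beta$ and $2C^{\gamma\lambda}_\beta f_\lambda v_\alpha$ are precisely what symmetrises the expression reproduces the stated formula, and the manifest $\alpha\leftrightarrow\beta$ symmetry confirms $\textbf{G}^\gamma_{\alpha\beta}=\textbf{G}^\gamma_{\beta\alpha}$.
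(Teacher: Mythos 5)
Your proposal is correct and is essentially the paper's own argument: the paper offers no proof beyond the remark that the corollary follows ``with a simple calculation,'' namely differentiating the coefficients $\textbf{G}^i_j$, $\textbf{G}^i_\beta$, $\textbf{G}^\alpha_j$, $\textbf{G}^\alpha_\beta$ of the preceding Lemma with respect to $y^j$ and $v^\beta$, which is exactly your block-by-block computation based on $\partial F_2^2/\partial v^\beta=2v_\beta$, $\partial v_\alpha/\partial v^\beta=g_{\alpha\beta}$, and $\partial g^{\lambda\gamma}/\partial v^\beta=-2C^{\lambda\gamma}_\beta$. The only blemish is terminological: $f^i=g^{ih}f_h$ is not fiber-independent (it depends on $y$ through $g^{ih}$), but what your calculation actually uses---its independence of $v$---is correct.
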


\bigskip

\begin{cor}\label{cor}
Let $(M_1\times{}_{f}M_2,F)$ be a twisted product
Finsler manifold.  Then
\[
\textbf{F}^c_{ab}=(\textbf{F}^k_{ij}, \textbf{F}^k_{i\beta}, \textbf{F}^k_{\alpha j}, \textbf{F}^k_{\alpha \beta}, \textbf{F}^\gamma_{ij}, \textbf{F}^\gamma_{i\beta}, \textbf{F}^\gamma_{\alpha j}, \textbf{F}^\gamma_{\alpha\beta})
\]
where
\begin{eqnarray}
\textbf{F}^k_{ij}\!\!\!\!&=&\!\!\!\!F^k_{ij}-\Big(M^r_jC^k_{ir}
+M^r_iC^k_{jr}
-M^r_hC_{ijr}g^{kh}\Big),\label{hor2}\\
\textbf{F}^k_{i\beta}\!\!\!\!&=&\!\!\!\!
-\textbf{G}^r_\beta C^k_{ir}=\textbf{F}^k_{\beta i},\label{hor3}\\
\textbf{F}^k_{\alpha\beta}\!\!\!\!&=&\!\!\!\!-ff^k
g_{\alpha\beta}+f^2g^{kh}\textbf{G}^\lambda_h C_{\alpha\beta\lambda},\label{hor4}
\\
\textbf{F}^\gamma_{ij}\!\!\!\!&=&\!\!\!\!f^{-2}g^{\gamma\lambda}
\textbf{G}^r_\lambda C_{ijr},\label{hor5}\\
\textbf{F}^\gamma_{i\beta}\!\!\!\!&=&\!\!\!\!f^{-1}f_i\delta^\gamma_\beta-\textbf{G}^\alpha_iC^\gamma_{\alpha\beta}=
\textbf{F}^\gamma_{\beta i},\label{hor6}\\
\textbf{F}^\gamma_{\alpha\beta}\!\!\!\!&=&\!\!\!\!F^\gamma_{\alpha\beta}+N^\gamma_{\alpha\beta}-\Big(M^\mu_\beta C^\gamma_{\alpha\mu}
 +M^\mu_\alpha C^\gamma_{\beta\mu}
-M^\mu_\lambda C_{\alpha\beta\mu}g^{\gamma\lambda}
\Big),\label{hor7}
\\
\nonumber F^k_{ij}\!\!\!\!&=&\!\!\!\!\frac{1}{2}g^{kh}(\frac{\delta g_{hi}}{\delta x^j}+\frac{\delta g_{hj}}{\delta x^i}-\frac{\delta g_{ij}}{\delta x^h}),\\
\nonumber  F^\gamma_{\alpha\beta}\!\!\!\!&=&\!\!\!\!\frac{1}{2}g^{\gamma\lambda}(\frac{\delta g_{\lambda\alpha}}{\delta u^\beta}+\frac{\delta g_{\lambda\beta}}{\delta u^\alpha}-\frac{\delta g_{\alpha\beta}}{\delta u^\lambda}),\\
\nonumber M^r_i\!\!\!\!&=&\!\!\!\!C^{rh}_iff_hF^2_2,\\
\nonumber  M^\mu_\alpha\!\!\!\!&=&\!\!\!\!f^{-1}(C^{\mu\gamma}_\alpha f_\gamma F^2_2+f_ry^r\delta^\mu_\alpha+f_\gamma v^\gamma \delta^\mu_\alpha-g^{\mu\gamma}f_\gamma v_\alpha+f_\alpha v^\mu), \\
\nonumber  N^\gamma_{\alpha\beta}\!\!\!\!&=&\!\!\!\!f^{-1}(f_\beta \delta^\gamma_\alpha+f_\alpha \delta^\gamma_\beta-f_\lambda g^{\gamma\lambda}g_{\alpha\beta}).
\end{eqnarray}
\end{cor}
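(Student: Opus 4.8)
The plan is to expand the defining formula (\ref{hor}) for $\textbf{F}^c_{ab}$ componentwise, exploiting the block structure of the fundamental tensor. From (\ref{Mat}) the metric and its inverse are block diagonal, $\textbf{g}_{ij}=g_{ij}$, $\textbf{g}_{\alpha\beta}=f^2g_{\alpha\beta}$, $\textbf{g}^{ij}=g^{ij}$, $\textbf{g}^{\alpha\beta}=f^{-2}g^{\alpha\beta}$, with all mixed blocks vanishing. Hence in (\ref{hor}) the contracted index $e$ must be of the same type as the free upper index $c$: for $c=k$ only $\textbf{g}^{kh}=g^{kh}$ survives, and for $c=\gamma$ only $\textbf{g}^{\gamma\lambda}=f^{-2}g^{\gamma\lambda}$ survives. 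This immediately splits the computation into the eight listed components (six up to the symmetry $\textbf{F}^c_{ab}=\textbf{F}^c_{ba}$, which is manifest from (\ref{hor})).

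The key preliminary step is a transfer rule converting the twisted horizontal derivatives $\D/\D x^j$ and $\D/\D u^\beta$ acting on metric components into the ordinary base horizontal derivatives $\delta/\delta x^j$, $\delta/\delta u^\beta$ plus correction terms. Writing out $\frac{\D}{\D x^j}=\frac{\partial}{\partial x^j}-\textbf{G}^k_j\frac{\partial}{\partial y^k}-\textbf{G}^\beta_j\frac{\partial}{\partial v^\beta}$ and using $\frac{\partial g_{hi}}{\partial y^k}=2C_{hik}$ together with the fact that $g_{ij}(x,y)$ is independent of $(u,v)$, I get $\frac{\D g_{hi}}{\D x^j}=\frac{\delta g_{hi}}{\delta x^j}-2M^r_jC_{hir}$ after substituting $\textbf{G}^k_j=G^k_j+M^r_j$ from (\ref{spray7}); the analogous rule for $\textbf{g}_{\alpha\beta}=f^2g_{\alpha\beta}$ picks up both such $M$-corrections and, crucially, derivatives of the twisting factor $f^2$. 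These inputs, all supplied by the Lemma and the preceding Corollary, are the only ingredients needed.

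From here each component is a direct substitution-and-regroup. For the pure first-factor block, the three cyclic terms in (\ref{hor}) reassemble the base Christoffel symbol $F^k_{ij}$ plus exactly the three Cartan-torsion corrections $M^r_jC^k_{ir}+M^r_iC^k_{jr}-M^r_hC_{ijr}g^{kh}$, giving (\ref{hor2}). The off-diagonal and mixed blocks (\ref{hor3})--(\ref{hor6}) are lighter: in each the vanishing of the mixed metric entries $\textbf{g}_{i\beta}$ kills several cyclic terms, leaving a single surviving derivative of a diagonal block, which produces the stated contraction of one $\textbf{G}$-coefficient against a Cartan torsion. For instance $\textbf{F}^\gamma_{ij}=f^{-2}g^{\gamma\lambda}\textbf{G}^r_\lambda C_{ijr}$ arises solely from $-\frac{\D g_{ij}}{\D u^\lambda}=2\textbf{G}^r_\lambda C_{ijr}$ with $\textbf{G}^r_\lambda$ from (\ref{spray8}), and $\textbf{F}^k_{\alpha\beta}$, $\textbf{F}^\gamma_{i\beta}$ fall out the same way once the $f^2$ factor is differentiated by the product rule.

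I expect the genuine obstacle to be the pure second-factor component $\textbf{F}^\gamma_{\alpha\beta}$. There the diagonal entry carries the twisting factor, $\textbf{g}_{\alpha\beta}=f^2g_{\alpha\beta}$, so the product rule generates both $f^2$-times-base-derivative terms and $g_{\alpha\beta}$-times-$f$-derivative terms, while simultaneously the operators $\D/\D u^\beta$ involve the lengthy coefficient $\textbf{G}^\mu_\beta$ of (\ref{spray10}) with its own $f$-dependence. The task is to show that after multiplying by $\frac{1}{2}f^{-2}g^{\gamma\lambda}$ the factor $f^2$ cancels to reconstruct the base symbol $F^\gamma_{\alpha\beta}$, that the pure $f$-derivative terms collapse into the compact expression $N^\gamma_{\alpha\beta}=f^{-1}(f_\beta\delta^\gamma_\alpha+f_\alpha\delta^\gamma_\beta-f_\lambda g^{\gamma\lambda}g_{\alpha\beta})$, and that the remaining terms assemble into the three $M$-corrections of (\ref{hor7}). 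Careful tracking of which derivative ($f_i$ versus $f_\gamma$) appears in each term, together with repeated use of $C_{\alpha\beta\gamma}=\tfrac12\partial g_{\alpha\beta}/\partial v^\gamma$ and the homogeneity identity $C_{\alpha\beta\gamma}v^\gamma=0$, is where the bookkeeping is heaviest; the other seven components are comparatively routine once the transfer rule is in place.
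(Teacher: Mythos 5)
Your proposal is correct and follows essentially the same route as the paper's own proof: expand the defining formula (\ref{hor}) blockwise using the block-diagonal structure of $\textbf{g}_{ab}$, and apply the transfer rule $\frac{\D g_{hi}}{\D x^j}=\frac{\delta g_{hi}}{\delta x^j}-2M^r_jC_{hir}$ (which is exactly the identity the paper derives from (\ref{spray7}) and (\ref{dec1}), since $g_{ij}$ depends only on $(x,y)$) to reassemble each component. The paper works out only $\textbf{F}^k_{ij}$ in detail and dismisses the rest as "similar," so your more explicit treatment of the mixed and second-factor blocks is simply a fuller execution of the same argument.
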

\begin{proof}
By using (\ref{hor}) we have
\begin{equation}
\textbf{F}^k_{ij}=\frac{1}{2}g^{kh}\Big(\frac{\D g_{hi}}{\D
x^j}+\frac{\D g_{hj}}{\D x^i}-\frac{\D g_{ij}}{\D
x^h}\Big).\label{hor1}
\end{equation}
Since $g_{ij}$ is a function with respect to $(x,y)$, then by (\ref{spray7}) and (\ref{dec1}) we obtain
\[
\frac{\D g_{hi}}{\D x^j}=
\frac{\delta g_{hi}}{\delta x^j}-2M^r_jC_{hir}.
\]
Interchanging $i$, $j$ and $h$ in the above equation gives us
\begin{eqnarray*}
\frac{\D g_{hj}}{\D x^i}=\frac{\delta g_{hj}}{\delta x^i}-2M^r_iC_{hjr}\\
\frac{\D g_{ij}}{\D x^h}= \frac{\delta g_{ij}}{\delta x^h}-2M^r_hC_{ijr}.
\end{eqnarray*}
Putting these equation in (\ref{hor1}), give us (\ref{hor2}). In the similar
way, we can prove the another relation.
\end{proof}
By using (i) of (\ref{spray6}) and (\ref{hor2})-(\ref{hor7}),  we can conclude the following.
\begin{lem}\label{lem}
Let $(M_1\times{}_{f}M_2,F)$ be a twisted product
Finsler manifold. Then  $\textbf{y}^c\textbf{F}^a_{bc}=\textbf{G}^a_b$, where
$\textbf{F}^a_{bc}$ and $\textbf{G}^a_{b}$ are defined by
(\ref{hor}) and (i) of (\ref{spray6}), respectively.
\end{lem}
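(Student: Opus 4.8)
The plan is to recognize this identity as the twisted-product counterpart of the classical Finsler relation $y^k F^i_{jk}=G^i_j$, and to establish it by a direct computation that splits according to the block structure of the indices. Since $\textbf{y}^c=(y^k,v^\gamma)$ and each of $a,b$ is either a first-factor index $i,j,\ldots$ or a second-factor index $\alpha,\beta,\ldots$, I would verify the four cases
\[
\textbf{y}^c\textbf{F}^i_{jc}=\textbf{G}^i_j,\quad
\textbf{y}^c\textbf{F}^i_{\beta c}=\textbf{G}^i_\beta,\quad
\textbf{y}^c\textbf{F}^\alpha_{jc}=\textbf{G}^\alpha_j,\quad
\textbf{y}^c\textbf{F}^\alpha_{\beta c}=\textbf{G}^\alpha_\beta,
\]
separately, substituting the explicit coefficients computed in Corollary \ref{cor} and comparing with the spray data (\ref{spray7})--(\ref{spray10}).

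The three recurring tools are: the homogeneity (contraction) identities for the Cartan torsion, $y^k C_{jkr}=0$ and $v^\gamma C_{\alpha\beta\gamma}=0$ together with their raised-index consequences $y^k C^{rh}_k=0$ and $v^\gamma C^\alpha_{\mu\gamma}=0$; the single-factor identities $y^k F^i_{jk}=G^i_j$ on $(M_1,F_1)$ and $v^\gamma F^\alpha_{\beta\gamma}=G^\alpha_\beta$ on $(M_2,F_2)$; and the elementary relations $g_{\gamma\beta}v^\gamma=v_\beta$ and $v_\alpha v^\alpha=F_2^2$. In the first case, contracting (\ref{hor2}) with $y^k$ kills the terms containing $y^kC^i_{kr}$, $y^kC_{jkr}$ and $y^kM^r_k$ (the last because $y^kM^r_k=ff_hF_2^2\,y^kC^{rh}_k=0$), leaving the Christoffel piece $y^kF^i_{jk}=G^i_j$; contracting (\ref{hor3}) with $v^\gamma$ and using $\textbf{G}^r_\gamma=-ff^r v_\gamma$ with $v^\gamma v_\gamma=F_2^2$ produces exactly the missing $C^{ih}_j ff_hF_2^2$, reconstituting $\textbf{G}^i_j$. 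The two mixed cases are shorter: in each, the $y^k$-contraction vanishes by the Cartan identity and the $v^\gamma$-contraction collapses to a single surviving term ($-ff^iv_\beta$ and $f^{-1}f_jv^\alpha$, using $g_{\gamma\beta}v^\gamma=v_\beta$ and $v^\gamma\delta^\alpha_\gamma=v^\alpha$), matching (\ref{spray8}) and (\ref{spray9}).

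The step I expect to require the most care is the last case $\textbf{y}^c\textbf{F}^\gamma_{\beta c}=\textbf{G}^\gamma_\beta$, where the twisting enters most heavily. Here the contraction $v^\alpha\textbf{F}^\gamma_{\beta\alpha}$ must be expanded using (\ref{hor7}), and the auxiliary quantity $v^\alpha M^\mu_\alpha$ has to be evaluated term by term from its five-term expression in Corollary \ref{cor}; three of those terms drop out (via $v^\alpha C^{\mu\gamma}_\alpha=0$ and $v_\alpha v^\alpha=F_2^2$), while the pieces $v^\alpha F^\gamma_{\alpha\beta}=G^\gamma_\beta$ and $v^\alpha N^\gamma_{\alpha\beta}$ supply the Riemannian and the $\delta$-type contributions, and the remaining term reduces, after using $v^\mu C^\gamma_{\beta\mu}=0$, to $-f^{-1}F_2^2 f^\mu C^\gamma_{\beta\mu}$. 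After also computing $y^j\textbf{F}^\gamma_{\beta j}=f^{-1}(f_jy^j)\delta^\gamma_\beta$ from (\ref{hor6}), collecting everything and applying the raising identity $f^\mu C^\gamma_{\beta\mu}=f_\lambda C^{\gamma\lambda}_\beta$ reproduces $\textbf{G}^\gamma_\beta$ in the form (\ref{spray10}) exactly. The main obstacle is thus purely the bookkeeping of these many terms and the disciplined use of the Cartan-torsion contraction identities to eliminate the spurious ones.
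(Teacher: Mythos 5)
Your proposal is correct and follows essentially the same route as the paper: the paper derives the lemma precisely by contracting the explicit coefficients (\ref{hor2})--(\ref{hor7}) of Corollary \ref{cor} with $\textbf{y}^c$ and comparing against the nonlinear connection coefficients (\ref{spray7})--(\ref{spray10}) defined via (i) of (\ref{spray6}), which is exactly your four-case computation. Your detailed verification (the Cartan contraction identities killing the torsion terms, $y^kF^i_{jk}=G^i_j$ and $v^\alpha F^\gamma_{\alpha\beta}=G^\gamma_\beta$ supplying the Riemannian pieces, and the surviving terms from $\textbf{G}^r_\gamma v^\gamma=-ff^rF_2^2$ and $v^\alpha M^\mu_\alpha$ reconstituting the twisted corrections) is exactly what the paper leaves implicit.
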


\smallskip

The Cartan torsion is one of the most important non-Riemannian quantity in Finsler geometry and it first introduced by Finsler  and emphased by Cartan which measures a departure from a Riemannian manifold. More precisely,  a Finsler metric reduces to a Riemannian metric if and only if it has vanishing Cartan torsion. The local components of  Cartan tensor field of the twisted Finsler manifold $(M_1\times{}_{f}M_2,F)$ is defined by
\[
\textbf{C}^a_{bc}=\frac{1}{2}\textbf{g}^{ae}\frac{\partial \textbf{g}_{be}}{\partial \textbf{y}^c}.
\]
From this definition, we conclude the following.
\begin{lem}\label{lemC}
 Let $C^k_{ij}$ and $C^\gamma_{\alpha\beta}$ be the local
components of Cartan tensor field on $M_1$ and $M_2$,
respectively. Then we have
\[
\textbf{C}^c_{ab}=(\textbf{C}^k_{ij}, \textbf{C}^k_{i\beta},
\textbf{C}^k_{\alpha j}, \textbf{C}^k_{\alpha \beta},
\textbf{C}^\gamma_{ij}, \textbf{C}^\gamma_{i\beta},
\textbf{C}^\gamma_{\alpha j}, \textbf{C}^\gamma_{\alpha\beta}),
\]
where
\begin{eqnarray*}
&&\textbf{C}^k_{ij}=\frac{1}{2}g^{kh}\frac{\partial g_{ij}}{\partial
y^h}=C^k_{ij},\\
&&\textbf{C}^\gamma_{\alpha\beta}=\frac{1}{2}g^{\gamma\lambda}\frac{\partial
g_{\alpha\beta}}{\partial v^\lambda}=C^\gamma_{\alpha\beta},
\end{eqnarray*}
and $\textbf{C}^k_{i\beta}=\textbf{C}^k_{\alpha j}=\textbf{C}^k_{\alpha \beta}=\textbf{C}^\gamma_{ij}=\textbf{C}^\gamma_{i\beta}=\textbf{C}^\gamma_{\alpha j}=0$.
\end{lem}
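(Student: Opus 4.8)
The plan is to work directly from the definition $\textbf{C}^a_{bc}=\frac{1}{2}\textbf{g}^{ae}\frac{\partial \textbf{g}_{be}}{\partial \textbf{y}^c}$, exploiting the block-diagonal form of the fundamental tensor recorded in (\ref{Mat}) together with its inverse $\textbf{g}^{ab}=\mathrm{diag}(g^{ij},f^{-2}g^{\alpha\beta})$. Because the index $e$ is summed and $\textbf{g}^{ae}$ vanishes whenever $a$ and $e$ belong to different factors, the contraction collapses: for $a=k$ only the $M_1$-block $e=h$ survives, and for $a=\gamma$ only the $M_2$-block $e=\lambda$ survives. Thus each of the eight components reduces to a single ordinary derivative of one metric block, and the whole statement becomes a short case analysis over the position (factor $M_1$ versus factor $M_2$) of the three indices $a,b,c$.

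The computation rests on three elementary facts about coordinate dependence. First, $g_{ij}=g_{ij}(x,y)$ is a function on $TM_1^\circ$ alone, so $\partial g_{ij}/\partial v^\beta=0$. Second, $g_{\alpha\beta}=g_{\alpha\beta}(u,v)$ lives on $TM_2^\circ$, so $\partial g_{\alpha\beta}/\partial y^j=0$. Third, the twisted function $f=f(x,u)$ depends only on the base point of $M_1\times M_2$ and not on the fibre coordinates, whence $\partial f/\partial y^j=\partial f/\partial v^\beta=0$. These observations, applied after the block reduction above, immediately kill every mixed component: $\textbf{C}^k_{i\beta}$ and $\textbf{C}^k_{\alpha\beta}$ vanish because differentiating $g_{ih}$ (respectively the off-diagonal zero block) by $v^\beta$ gives zero; $\textbf{C}^k_{\alpha j}$, $\textbf{C}^\gamma_{ij}$ and $\textbf{C}^\gamma_{i\beta}$ vanish because the surviving metric entry is an off-diagonal zero; and $\textbf{C}^\gamma_{\alpha j}$ vanishes because $f^2 g_{\alpha\lambda}$ is independent of $y^j$.

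For the two surviving components the reduction yields $\textbf{C}^k_{ij}=\frac{1}{2}g^{kh}\,\partial g_{ih}/\partial y^j$, which equals $C^k_{ij}$ by the total symmetry $C_{ihj}=C_{ijh}$ of the Cartan tensor (this is exactly why the stated form $\frac{1}{2}g^{kh}\partial g_{ij}/\partial y^h$ is legitimate), and for the $M_2$-block $\textbf{C}^\gamma_{\alpha\beta}=\frac{1}{2}f^{-2}g^{\gamma\lambda}\,\partial(f^2 g_{\alpha\lambda})/\partial v^\beta$. The one point demanding care---and the closest thing to an obstacle here---is this last line: a priori the twisting factor could contribute, but since $\partial f^2/\partial v^\beta=0$ the factor $f^2$ pulls out of the derivative and cancels exactly against the $f^{-2}$ coming from the inverse metric, leaving $\frac{1}{2}g^{\gamma\lambda}\,\partial g_{\alpha\lambda}/\partial v^\beta=C^\gamma_{\alpha\beta}$. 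Hence the twisted Cartan tensor carries no trace of $f$, and assembling the eight cases gives the asserted list, completing the proof.
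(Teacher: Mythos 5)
Your proof is correct and is exactly the argument the paper intends: the paper offers no written proof beyond ``From this definition, we conclude the following,'' and that implicit argument is precisely your direct computation of $\textbf{C}^a_{bc}=\frac{1}{2}\textbf{g}^{ae}\partial \textbf{g}_{be}/\partial \textbf{y}^c$ using the block-diagonal form (\ref{Mat}), its inverse, and the coordinate dependence of $g_{ij}$, $g_{\alpha\beta}$ and $f$. Your attention to the two non-obvious points---the total symmetry of the Cartan tensor justifying the stated form of $\textbf{C}^k_{ij}$, and the cancellation of $f^2$ against $f^{-2}$ because $\partial f/\partial v^\beta=0$---supplies exactly the details the paper leaves to the reader.
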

By using the Lemma \ref{lemC}, we  can get the following.
\begin{cor}\label{1}
Let $(M_1\times{}_{f}M_2,F)$ be a twisted product
Finsler manifold. Then $(M_1\times{}_{f}M_2,F)$ is a
Riemannian manifold if and only if $(M_1, F_1)$ and $(M_2, F_2)$
are Riemannian manifold.
\end{cor}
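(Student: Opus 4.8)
The plan is to obtain this statement as a direct consequence of Lemma \ref{lemC}, combined with the standard criterion recalled in the Preliminary section: a Finsler metric is Riemannian if and only if its Cartan torsion vanishes identically. So first I would restate that criterion in all three relevant places, namely that $(M_1\times_f M_2,F)$ is Riemannian precisely when $\textbf{C}^c_{ab}=0$, that $(M_1,F_1)$ is Riemannian precisely when $C^k_{ij}=0$, and that $(M_2,F_2)$ is Riemannian precisely when $C^\gamma_{\alpha\beta}=0$.

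Next I would invoke Lemma \ref{lemC} to read off the components of the Cartan tensor of the twisted product. That lemma shows that the only possibly nonzero components are the two diagonal blocks $\textbf{C}^k_{ij}=C^k_{ij}$ and $\textbf{C}^\gamma_{\alpha\beta}=C^\gamma_{\alpha\beta}$, while every mixed block vanishes identically:
\[
\textbf{C}^k_{i\beta}=\textbf{C}^k_{\alpha j}=\textbf{C}^k_{\alpha \beta}=\textbf{C}^\gamma_{ij}=\textbf{C}^\gamma_{i\beta}=\textbf{C}^\gamma_{\alpha j}=0.
\]
Consequently the full tensor $\textbf{C}^c_{ab}$ vanishes if and only if both $C^k_{ij}=0$ and $C^\gamma_{\alpha\beta}=0$, since the mixed blocks impose no further condition and the surviving blocks are exactly the component Cartan tensors.

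To finish I would chain the equivalences: $(M_1\times_f M_2,F)$ is Riemannian if and only if $\textbf{C}^c_{ab}=0$, if and only if $C^k_{ij}=0$ and $C^\gamma_{\alpha\beta}=0$, if and only if $(M_1,F_1)$ and $(M_2,F_2)$ are both Riemannian. I should stress that there is essentially no obstacle remaining at the level of the corollary itself: all of the genuine computational content has already been absorbed into Lemma \ref{lemC}, whose proof is where one must verify both that the mixed blocks of $\textbf{C}$ vanish and that the diagonal blocks reduce to the component Cartan tensors. Given that lemma, the corollary is simply a clean application of the vanishing-Cartan-torsion characterization of Riemannian metrics, with the twisting function $f$ playing no role because it does not appear in the components $\textbf{C}^k_{ij}$ and $\textbf{C}^\gamma_{\alpha\beta}$.
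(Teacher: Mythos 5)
Your proposal is correct and follows essentially the same route as the paper, which derives the corollary directly from Lemma \ref{lemC} together with the standard fact that a Finsler metric is Riemannian if and only if its Cartan torsion vanishes. You have merely made explicit the chain of equivalences that the paper leaves implicit in the phrase ``By using the Lemma \ref{lemC}, we can get the following.''
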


\bigskip

Various interesting special forms of Cartan tensors have been obtained by some Finslerians \cite{Mat4}. The Finsler spaces having such special forms have been called C-reducible, C2-like, semi-C-reducible, and etc. In \cite{M3}, Matsumoto  introduced the notion of C-reducible Finsler metrics and proved that any Randers metric is C-reducible. Later on, Matsumoto-H\={o}j\={o} proves that the converse is true too \cite{MH}.

Here, we define the Matsumoto twisted tensor $\textbf{M}_{abc}$ for a twisted product
Finsler manifold $(M_1\times{}_{f}M_2,F)$ as follows:
\[
\textbf{M}_{abc}=\textbf{C}_{abc}-\frac{1}{n+1}\{\textbf{I}_a\textbf{h}_{bc}+\textbf{I}_b\textbf{h}_{ac}
+\textbf{I}_c\textbf{h}_{ab}\},
\]
where $\textbf{I}_a=\textbf{g}^{bc}\textbf{C}_{abc}$,
$\textbf{C}_{abc}=\textbf{g}_{cd}\textbf{C}^d_{ab}$ and
$\textbf{h}_{ab}=\textbf{g}_{ab}-\frac{1}{F^2}\textbf{y}_a\textbf{y}_b$.
By attention to the above equation and relations
\[
\textbf{C}_{ijk}=C_{ijk}, \ \  \ \  \textbf{C}_{\alpha\beta\gamma}=f^2C_{\alpha\beta\gamma},
\]
we obtain
\[
\textbf{M}_{\alpha
jk}=-\frac{1}{n+1}\Big\{I_\alpha(g_{jk}-\frac{1}{F^2}y_jy_k)-\frac{f^2}{F^2}v_\alpha(I_j
y_k+I_ky_j)\Big\}.
\]
Contracting the above equation in $y^jy^k$ give us
\[
y^jy^k\textbf{M}_{\alpha
jk}=-\frac{f^2F_1^2F_2^2}{(n+1)F^2}I_\alpha.
\]
Similarly, we obtain
\[
v^\lambda v^\beta\textbf{M}_{i\beta\lambda}=-\frac{f^2F_1^2F_2^2}{(n+1)F^2}I_i.
\]
Therefore if $\textbf{M}_{i\beta\lambda}=\textbf{M}_{\alpha jk}=0$,  then we get $I_i=I_\alpha=0$, i.e.,
$(M_1, F_1)$ and $(M_2, F_2)$ are Riemannian manifolds. Thus we have
\begin{thm}
There is not exist any  C-reducible twisted product Finsler manifold.
\end{thm}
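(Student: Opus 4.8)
The plan is to exploit the mixed-index blocks of the Matsumoto tensor, which are exactly the obstruction assembled in the paragraph preceding the statement. A twisted product metric is C-reducible precisely when $\textbf{M}_{abc}\equiv 0$ for every index type; in particular the mixed components $\textbf{M}_{\alpha jk}$ and $\textbf{M}_{i\beta\lambda}$ must vanish identically on $TM^\circ$. So the entire argument reduces to showing that these mixed blocks cannot vanish unless each factor is already Riemannian, and then arguing that this collapses the whole structure.

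First I would record the two contraction identities already established,
\[
y^jy^k\textbf{M}_{\alpha jk}=-\frac{f^2F_1^2F_2^2}{(n+1)F^2}\,I_\alpha,\qquad v^\lambda v^\beta\textbf{M}_{i\beta\lambda}=-\frac{f^2F_1^2F_2^2}{(n+1)F^2}\,I_i,
\]
and observe that the scalar coefficient $-f^2F_1^2F_2^2/((n+1)F^2)$ is nowhere zero on $TM^\circ$: the twisted function $f$ is positive by construction, while $F_1,F_2,F$ are all positive away from the zero section. Consequently, imposing $\textbf{M}_{\alpha jk}=0$ forces $I_\alpha=0$ and imposing $\textbf{M}_{i\beta\lambda}=0$ forces $I_i=0$; that is, the mean Cartan torsions of both factors vanish.

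Next I would invoke Deicke's theorem from the preliminaries: a Finsler metric has vanishing mean Cartan torsion if and only if it is Riemannian. Thus $I_i=0$ and $I_\alpha=0$ say exactly that $(M_1,F_1)$ and $(M_2,F_2)$ are Riemannian. Feeding this into Corollary \ref{1}, which characterizes Riemannianity of the twisted product through its factors, yields that $(M_1\times_f M_2,F)$ is itself Riemannian, hence has identically vanishing Cartan torsion and is not a proper Finsler metric. This contradicts the assumption of a genuine C-reducible twisted product and finishes the proof.

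The step I expect to carry the real weight is the computation of $\textbf{M}_{\alpha jk}$ supplied just above the statement: although the mixed Cartan components $\textbf{C}_{\alpha jk}$ vanish by Lemma \ref{lemC}, the off-diagonal part of the angular metric, $\textbf{h}_{\alpha k}=-f^2 v_\alpha y_k/F^2$, reintroduces the factor torsions $I_j,I_k,I_\alpha$ into the mixed Matsumoto block, which is precisely what makes the two contraction identities nontrivial. The only interpretive subtlety is that C-reducibility here forces a degeneration to the (trivially C-reducible) Riemannian case, so the statement should be read as the nonexistence of \emph{non-Riemannian} C-reducible twisted product Finsler manifolds.
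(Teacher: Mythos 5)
Your proposal is correct and follows essentially the same route as the paper's own proof: the same two contraction identities $y^jy^k\textbf{M}_{\alpha jk}=-\frac{f^2F_1^2F_2^2}{(n+1)F^2}I_\alpha$ and $v^\lambda v^\beta\textbf{M}_{i\beta\lambda}=-\frac{f^2F_1^2F_2^2}{(n+1)F^2}I_i$ force $I_i=I_\alpha=0$, Deicke's theorem makes both factors Riemannian, and the twisted product then degenerates to the Riemannian (hence non-C-reducible) case. The only difference is presentational: you spell out the final contradiction explicitly via Corollary \ref{1}, a step the paper leaves implicit after concluding that $(M_1,F_1)$ and $(M_2,F_2)$ are Riemannian.
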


Now, we are going to consider semi-C-reducible twisted product
Finsler manifold $(M_1\times{}_{f}M_2,F)$. Let $(M_1\times{}_fM_2,F)$ be a semi-C-reducible twisted product Finsler manifold. Then we have
\[
\textbf{C}_{abc}=\frac{p}{n+1}\{\textbf{I}_a\textbf{h}_{bc}+\textbf{I}_b\textbf{h}_{ac}
+\textbf{I}_c\textbf{h}_{ab}\}+\frac{q}{\textbf{C}^2}\textbf{I}_a\textbf{I}_b\textbf{I}_c,
\]
where $\textbf{C}^2=\textbf{I}^a\textbf{I}_a$ and $p$ and $q$ are scalar function on $M_1\times{}_fM_2$ with $p+q=1$. This equation gives us
\[
0=\textbf{C}_{\alpha jk}=\frac{p}{n+1}\Big\{I_\alpha(g_{jk}-\frac{1}{F^2}y_jy_k)-\frac{f^2}{F^2}v_\alpha(I_j
y_k+I_ky_j)\Big\}+\frac{q}{\textbf{C}^2}I_\alpha I_jI_k.
\]
Contraction the above equation with $y^jy^k$ implies that
\[
pf^2F_1^2F_2^2I_\alpha=0.
\]
Therefore we have $p=0$ or $I_\alpha=0$. If $p=0$, then $F$ is $C2$-like metric. But if $p\neq 0$, then $I_\alpha=0$, i. e., $F_2$ is Riemannian metric. In this case, with similar way we conclude that $F_1$ is Riemannian metric. But, by definition $F$ can not be a Riemannian metric. Therefore we have
\begin{thm}
Every semi-C-reducible twisted product Finsler manifold $(M_1\times{}_fM_2,F)$ is a $C2$-like manifold.
\end{thm}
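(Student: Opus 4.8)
The plan is to start from the defining relation of semi-C-reducibility for the twisted metric and exploit the block structure of its Cartan tensor recorded in Lemma \ref{lemC}. By that lemma the mixed components vanish; in particular $\textbf{C}_{\alpha jk}=0$ whenever one index comes from $M_2$ and the other two from $M_1$. Substituting $a=\alpha$, $b=j$, $c=k$ into the semi-C-reducible identity therefore forces the right-hand side to vanish, yielding a single scalar relation among $p$, $q$, the mean Cartan torsions $I_\alpha,I_j$, and the angular metric of $F$.

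First I would make the right-hand side explicit using $\textbf{I}_\alpha=I_\alpha$, $\textbf{I}_j=I_j$, $\textbf{h}_{jk}=g_{jk}-F^{-2}y_jy_k$, together with the mixed angular-metric components $\textbf{h}_{\alpha k}=-F^{-2}f^2v_\alpha y_k$ (which follow from $\textbf{g}_{\alpha k}=0$ and $\textbf{y}_\alpha=f^2v_\alpha$). This reproduces the displayed equation for $\textbf{C}_{\alpha jk}$. The decisive step is then to contract that relation with $y^jy^k$, using three facts: the homogeneity identity $I_jy^j=0$ for the mean Cartan torsion, the norm relation $g_{jk}y^jy^k=F_1^2$, and the defining identity $F^2=F_1^2+f^2F_2^2$. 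The first annihilates both the $v_\alpha(I_jy_k+I_ky_j)$ term and the entire $q$-term, while the surviving angular term collapses as $F_1^2(1-F_1^2/F^2)=f^2F_1^2F_2^2/F^2$. The outcome is $pf^2F_1^2F_2^2I_\alpha=0$, so either $p=0$ or $I_\alpha=0$.

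The remaining work is purely logical. If $p=0$, the metric is $C2$-like by definition and we are finished. If instead $p\neq 0$, then $I_\alpha=0$; running the symmetric computation — contracting $\textbf{C}_{i\beta\lambda}=0$ against $v^\beta v^\lambda$ — gives $I_i=0$ as well. By Deicke's theorem both $(M_1,F_1)$ and $(M_2,F_2)$ are then Riemannian, so Corollary \ref{1} would force $(M_1\times_f M_2,F)$ to be Riemannian; but a Riemannian metric has $\textbf{C}=0$ and hence $\textbf{C}^2=0$, which is incompatible with the semi-C-reducible form in which $\textbf{C}^2$ appears in a denominator. The branch $p\neq 0$ is thereby excluded, leaving $p=0$.

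I expect the main obstacle to be the bookkeeping in the contraction step — verifying that every mixed term and the $q$-contribution drop out cleanly under $y^jy^k$, and that the surviving angular term simplifies \emph{exactly} to $f^2F_1^2F_2^2/F^2$ — together with phrasing the final exclusion of the Riemannian case precisely, since that is the one place where the genuine twisted-product hypothesis (as opposed to merely being Finsler) is actually invoked.
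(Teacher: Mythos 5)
Your proposal is correct and follows essentially the same route as the paper's own proof: substitute $a=\alpha$, $b=j$, $c=k$ into the semi-C-reducible identity using the block structure $\textbf{C}_{\alpha jk}=0$ from Lemma \ref{lemC}, contract with $y^jy^k$ (killing the $q$-term and the mixed terms via $I_jy^j=0$) to get $pf^2F_1^2F_2^2I_\alpha=0$, then rule out $p\neq 0$ because it would force $I_\alpha=0$ and, by the symmetric contraction, $I_i=0$, making both factors and hence $F$ Riemannian, which is incompatible with semi-C-reducibility. Your only departures are expository refinements --- writing out the mixed angular-metric components $\textbf{h}_{\alpha k}$ explicitly and pinning the final contradiction on the $\textbf{C}^2$ appearing in a denominator, where the paper simply asserts that ``by definition $F$ can not be a Riemannian metric.''
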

%----------------------------------------------------------------------------------
\section{Riemannian Curvature}
The Riemannian curvature of twisted product Finsler manifold
$(M_1\times{}_{f}M_2,F)$ with respect to Berwald
connection is given by
\begin{equation}
\textbf{R}^{\ a}_{b\ cd}=\frac{\D \textbf{F}^a_{bc}}{\D \textbf{x}^d}-\frac{\D \textbf{F}^a_{bd}}{\D \textbf{x}^c}+\textbf{F}^a_{de}\textbf{F}^e_{bc}
-\textbf{F}^a_{ce}\textbf{F}^e_{bd}.\label{cur}
\end{equation}
\begin{lem}
Let $(M_1\times_fM_2, F)$ be a twisted product Finsler manifold.  Then we have
\[
\textbf{R}^a_{\ cd}=\textbf{y}^b\textbf{R}^{\ a}_{b\ cd},
\]
where $\textbf{R}^a_{\ cd}$ and $\textbf{y}^b\textbf{R}^{\ a}_{b\ cd}$ are given by (\ref{p14}) and (\ref{cur}).
\end{lem}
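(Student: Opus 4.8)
The plan is to start from the relation established in Lemma \ref{lem}, namely $\textbf{y}^b\textbf{F}^a_{bc}=\textbf{G}^a_c$, and to differentiate it horizontally. Recalling that $\textbf{R}^a_{\ cd}=\frac{\D \textbf{G}^a_c}{\D \textbf{x}^d}-\frac{\D \textbf{G}^a_d}{\D \textbf{x}^c}$ from the curvature of the nonlinear connection (\ref{p14}), I would substitute $\textbf{G}^a_c=\textbf{y}^b\textbf{F}^a_{bc}$ and apply the Leibniz rule to $\frac{\D}{\D \textbf{x}^d}(\textbf{y}^b\textbf{F}^a_{bc})$.

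The first key step is to compute the horizontal derivative of the fibre coordinate. Since $\textbf{y}^b$ does not depend on the base coordinates, the definition of $\frac{\D}{\D \textbf{x}^d}$ in (\ref{dec1}), which subtracts the vertical part $\textbf{G}^b_d\frac{\partial}{\partial \textbf{y}^b}$, yields $\frac{\D \textbf{y}^b}{\D \textbf{x}^d}=-\textbf{G}^b_d$. Hence
\[
\frac{\D \textbf{G}^a_c}{\D \textbf{x}^d}=-\textbf{G}^b_d\,\textbf{F}^a_{bc}+\textbf{y}^b\frac{\D \textbf{F}^a_{bc}}{\D \textbf{x}^d},
\]
and similarly for the $c\leftrightarrow d$ interchange. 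Subtracting the two gives
\[
\textbf{R}^a_{\ cd}=\textbf{y}^b\Big(\frac{\D \textbf{F}^a_{bc}}{\D \textbf{x}^d}-\frac{\D \textbf{F}^a_{bd}}{\D \textbf{x}^c}\Big)-\textbf{G}^b_d\,\textbf{F}^a_{bc}+\textbf{G}^b_c\,\textbf{F}^a_{bd}.
\]

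To finish, I would expand $\textbf{y}^b\textbf{R}^{\ a}_{b\ cd}$ directly from (\ref{cur}). The two horizontal-derivative terms match immediately, so it remains to reconcile the quadratic terms $\textbf{y}^b(\textbf{F}^a_{de}\textbf{F}^e_{bc}-\textbf{F}^a_{ce}\textbf{F}^e_{bd})$. Contracting $\textbf{y}^b$ into the inner factors and invoking Lemma \ref{lem} once more ($\textbf{y}^b\textbf{F}^e_{bc}=\textbf{G}^e_c$ and $\textbf{y}^b\textbf{F}^e_{bd}=\textbf{G}^e_d$) turns these into $\textbf{F}^a_{de}\textbf{G}^e_c-\textbf{F}^a_{ce}\textbf{G}^e_d$. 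Using the symmetry $\textbf{F}^a_{de}=\textbf{F}^a_{ed}$ of the horizontal coefficients, which is evident from the definition (\ref{hor}), this equals $\textbf{G}^e_c\textbf{F}^a_{ed}-\textbf{G}^e_d\textbf{F}^a_{ec}$, and after renaming the dummy index it coincides exactly with the leftover terms $-\textbf{G}^b_d\textbf{F}^a_{bc}+\textbf{G}^b_c\textbf{F}^a_{bd}$ in the expression for $\textbf{R}^a_{\ cd}$. This establishes the identity.

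The main obstacle is bookkeeping rather than conceptual: one must correctly apply the Leibniz rule for the nonholonomic derivative $\frac{\D}{\D \textbf{x}^d}$, pin down the sign in $\frac{\D \textbf{y}^b}{\D \textbf{x}^d}=-\textbf{G}^b_d$, and keep the dummy indices aligned so that the symmetry of $\textbf{F}$ in its lower indices can be used to cancel the spurious connection-times-connection contributions. Since all the local coefficients $\textbf{G}^a_b$, $\textbf{F}^a_{bc}$, and the metric blocks are already listed explicitly in the preceding Lemmas and Corollaries, no further structure specific to the twisted product is required; indeed the identity is a general feature of the Berwald connection attached to the nonlinear connection $\textbf{G}$.
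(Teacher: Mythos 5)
Your proof is correct and takes essentially the same route as the paper's: both arguments rest on Lemma \ref{lem} ($\textbf{y}^b\textbf{F}^a_{bc}=\textbf{G}^a_c$), the Leibniz rule for the nonholonomic derivative with $\frac{\D \textbf{y}^b}{\D \textbf{x}^d}=-\textbf{G}^b_d$, and the symmetry $\textbf{F}^a_{bc}=\textbf{F}^a_{cb}$ to cancel the connection-times-connection terms. The only difference is organizational: you run the computation once in the unified indices $a,b,c,d$, whereas the paper expands $\textbf{y}^b\textbf{R}^{\ i}_{b\ kl}$ block by block in the split indices and notes that the remaining index combinations follow in the same way.
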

\begin{proof}
By using (\ref{cur}), we have
\begin{equation}
\textbf{y}^b\textbf{R}^{\ i}_{b\ kl}=\textbf{y}^b\frac{\D \textbf{F}^i_{bk}}{\D \textbf{x}^l}-\textbf{y}^b\frac{\D \textbf{F}^i_{bl}}{\D \textbf{x}^k}+\textbf{y}^b\textbf{F}^i_{le}\textbf{F}^e_{bk}.\label{p11}
-\textbf{y}^b\textbf{F}^i_{ke}\textbf{F}^e_{bl}.
\end{equation}
By using Corollary \ref{cor} and Lemma \ref{lem}, we obtain
\begin{eqnarray}
&&\textbf{y}^b\frac{\D \textbf{F}^i_{bk}}{\D \textbf{x}^l}
=\frac{\D \textbf{G}^i_k}{\D x^l}+\textbf{F}^i_{jk}\textbf{G}^j_l+\textbf{F}^i_{\beta k}\textbf{G}^\beta_l,\label{p112}\\
&& \textbf{y}^b\textbf{F}^i_{le}\textbf{F}^e_{bk}=\textbf{F}^i_{lh}\textbf{G}^h_k+\textbf{F}^i_{l\gamma}\textbf{G}_k^\gamma.
\label{p12}
\end{eqnarray}
Interchanging $i$ and $j$ in the above equation imply that
\begin{eqnarray}
&&\textbf{y}^b\frac{\D \textbf{F}^i_{bl}}{\D \textbf{x}^k}
=\frac{\D \textbf{G}^i_l}{\D x^k}+\textbf{F}^i_{jl}\textbf{G}^j_k+\textbf{F}^i_{\beta l}\textbf{G}^\beta_k,\label{p113}\\
&&\textbf{y}^b\textbf{F}^i_{ke}\textbf{F}^e_{bl}=\textbf{F}^i_{kh}\textbf{G}^h_l+\textbf{F}^i_{k\gamma}\textbf{G}_l^\gamma.
\label{p13}
\end{eqnarray}
Setting (\ref{p112}), (\ref{p12}), (\ref{p113}) and (\ref{p13}) in (\ref{p11}) give us $\textbf{y}^b\textbf{R}^{\ i}_{b\ kl}=\textbf{R}^i_{\ kl}$. In the similar way, we can obtain this relation for another indices.
\end{proof}

\bigskip

Using (\ref{cur}), we can compute the Riemannian curvature of a twisted product Finsler manifold.
\begin{lem}\label{thm5}
Let $(M_1\times_fM_2, F)$ be a twisted product Finsler manifold . Then the coefficients of Riemannian curvature  are as follows:
\begin{eqnarray}
\textbf{R}^{\ i}_{j\ kl}\!\!\!\!&=&\!\!\!\!R^{\ i}_{j\ kl}-\Big\{\{M^r_l\frac{\partial F^i_{jk}}{\partial y^r}
+\frac{\D M^i_{jk}}{\D x^l}+F^i_{lh}M^h_{jk}+M^i_{lh}F^h_{jk}-M^i_{lh}M^h_{jk}\nonumber\\
\!\!\!\!&&\!\!\!\!+f^{-2}g^{\alpha\gamma}
\textbf{G}^r_\alpha\textbf{G}^m_\gamma C^i_{lr}C_{jkm}\}
-\mathfrak{C}^k_l\Big\}\label{cur0}
\\
\textbf{R}^{\ i}_{\alpha\
kl}\!\!\!\!&=&\!\!\!\!\Big\{-\frac{\D}{\D
x^l}(\textbf{G}^r_\alpha C^i_{kr}
)-(F^i_{rl}-M^i_{rl})\textbf{G}^m_\alpha
C^r_{km}-f^{-1}\textbf{G}^r_\beta C^i_{lr}f_k\delta^\beta_\alpha\nonumber\\
\!\!\!\!&&\!\!\!\!+
\textbf{G}^r_\beta\textbf{G}^\mu_k C^i_{lr}C^\beta_{\alpha\mu}
\Big\}-\mathfrak{C}^k_l.
\end{eqnarray}
\begin{eqnarray}
\textbf{R}^{\ i}_{j\
\beta\lambda}\!\!\!\!&=&\!\!\!\!\Big\{-\frac{\D}{\D
u^\lambda}(\textbf{G}^r_\beta C^i_{jr} )+\textbf{G}^m_\lambda
\textbf{G}^l_\beta C^i_{rm}C^r_{jl}-(f^i
g_{\alpha\lambda}-f\textbf{G}^\mu_hg^{ih} C_{\alpha\lambda\mu}
)\nonumber\\
\!\!\!\!&&\!\!\!\!(f_j\delta^\alpha_\beta-f \textbf{G}^\nu_jC^\alpha_{\beta\nu}
)\Big\}-\mathfrak{C}^\beta_\lambda.
\end{eqnarray}
\begin{eqnarray}
\textbf{R}^{\ i}_{\alpha\ \beta l}\!\!\!\!&=&\!\!\!\!\frac{\D}{\D
u^\beta}( \textbf{G}^r_\alpha C^i_{lr}
)-\frac{\D}{\D
x^l}f(f^i
g_{\alpha\beta}-f\textbf{G}^\lambda_h
g^{ih}C_{\alpha\beta\lambda})\nonumber\\
\!\!\!\!&&\!\!\!\! -
\textbf{G}^m_\beta\textbf{G}^s_\alpha C^i_{rm}C^r_{ls}
+(f^i
g_{\mu\beta}-f\textbf{G}^\lambda_h
g^{ih}C_{\mu\beta\lambda}
)(f_l
\delta^\mu_\alpha\nonumber\\
\!\!\!\!&&\!\!\!\!-f\textbf{G}^\nu_l
C^\mu_{\alpha\nu})-fg^{rh}(F^i_{rl}-M^i_{rl})(f_h
g_{\alpha\beta}-f\textbf{G}^\lambda_hC_{\alpha\beta\lambda}
)\nonumber\\
\!\!\!\!&&\!\!\!\!-\textbf{G}^r_\mu
C^i_{lr}(F^\mu_{\alpha\beta}+N^\mu_{\alpha\beta}-M^\mu_{\alpha\beta}).
\end{eqnarray}
\begin{eqnarray}
\textbf{R}^{\ i}_{j\ \beta l}\!\!\!\!&=&\!\!\!\!-\frac{\D}{\D
x^l}( \textbf{G}^r_\beta C^i_{jr} )-\frac{\D}{\D
u^\beta}(F^i_{jl}-M^i_{jl})-(F^i_{lr}-M^i_{lr})\textbf{G}^s_\beta C^r_{js}\nonumber\\
\!\!\!\!&&\!\!\!\!-f^{-1}\textbf{G}^r_\alpha C^i_{lr}(f_j\delta^\alpha_\beta-f\textbf{G}^\mu_jC^\alpha_{\beta\mu})+
\textbf{G}^s_\beta C^i_{rs}(F^r_{jl}-M^r_{jl})\nonumber\\
\!\!\!\!&&\!\!\!\!+f^{-1}\textbf{G}^r_\mu
C_{jlr}(f^i\delta^\mu_\beta-f\textbf{G}^\lambda_hg^{ih}C^\mu_{\beta\lambda})
\end{eqnarray}
\begin{eqnarray}
\textbf{R}^{\ i}_{\alpha\ \beta \lambda}\!\!\!\!&=&\!\!\!\!\Big\{-\frac{\D}{\D
u^\lambda}( ff^ig_{\alpha\beta}-f^2g^{ih}\textbf{G}^\mu_h C_{\alpha\beta\mu}
)+f\textbf{G}^s_\lambda C^i_{rs}(f^rg_{\alpha\beta}\nonumber\\
\!\!\!\!&&\!\!\!\!-f\textbf{G}^\mu_l C_{\alpha\beta\mu}g^{rl})
-f(f^ig_{\lambda\mu}-fg^{ih}\textbf{G}^\kappa_hC_{\lambda\mu\kappa})(F^\mu_{\alpha\beta}\nonumber\\
\!\!\!\!&&\!\!\!\!+N^\mu_{\alpha\beta}-M^\mu_{\alpha\beta})\Big\}-\mathfrak{C}^\beta_\lambda.
\end{eqnarray}
\begin{eqnarray}
\textbf{R}^{\ \gamma}_{j\
kl}\!\!\!\!&=&\!\!\!\!\Big\{\frac{\D}{\D
x^l}(f^{-2}
g^{\gamma\lambda}\textbf{G}^r_\lambda C_{jkr}
)+f^{-2}g^{\gamma\lambda}\textbf{G}^s_\lambda C_{lrs}(F^r_{jk}-M^r_{jk})
\nonumber\\
\!\!\!\!&&\!\!\!\!+f^{-3}\textbf{G}^r_\mu C_{jkr}(f_l
g^{\gamma\mu}-f\textbf{G}^\alpha_l C^{\gamma\mu}_{\alpha})
\Big\}-\mathfrak{C}^k_l.
\end{eqnarray}
\begin{eqnarray}
\textbf{R}^{\ \gamma}_{j\
\beta l}\!\!\!\!&=&\!\!\!\!\frac{\D}{\D
x^l}(f^{-1}f_j
\delta^\gamma_\beta-f\textbf{G}^\alpha_j
C^\gamma_{\alpha\beta})-\frac{\D}{\D u^\beta}(f^{-2}g^{\gamma\lambda}\textbf{G}^r_\lambda C_{jlr})\nonumber\\
\!\!\!\!&&\!\!\!\!-f^{-2}g^{\gamma\lambda}\textbf{G}^s_\lambda\textbf{G}^m_\beta
C^h_{ls}C_{hjm}+f^{-2}(f_l\delta^\gamma_\mu-f\textbf{G}^\alpha_lC^\gamma_{\mu\alpha})
(f_j\delta^\mu_\beta\nonumber\\
\!\!\!\!&&\!\!\!\!-f\textbf{G}^\nu_jC^\mu_{\beta\nu})
-f^{-1}(f_r
\delta^\gamma_\beta-f\textbf{G}^\alpha_r
C^\gamma_{\beta\alpha})(F^r_{jl}-M^r_{jl})\nonumber\\
\!\!\!\!&&\!\!\!\!
-f^{-2}g^{\mu\lambda}\textbf{G}^r_\lambda C_{jlr}(F^\gamma_{\beta\mu}+N^\gamma_{\beta\mu}-M^\gamma_{\beta\mu}).
\end{eqnarray}
\begin{eqnarray}
\textbf{R}^{\ \gamma}_{\alpha\
\beta l}\!\!\!\!&=&\!\!\!\!\frac{\D}{\D
x^l}(F^\gamma_{\alpha\beta}+N^\gamma_{\alpha\beta}-M^\gamma_{\alpha\beta}
)-\frac{\D}{\D u^\beta}(f^{-1}f_l\delta^\gamma_\alpha-\textbf{G}^\mu_l C^\gamma_{\alpha\mu})\nonumber\\
\!\!\!\!&&\!\!\!\!-f^{-1}g^{\gamma\lambda}\textbf{G}^s_\lambda C^h_{ls}(f_hg_{\alpha\beta}-f\textbf{G}^\mu_hC_{\alpha\beta\mu}) +f^{-1}(f_l\delta^\gamma_\mu\nonumber\\
\!\!\!\!&&\!\!\!\!-f\textbf{G}^\kappa_lC^\gamma_{\mu\kappa})
(F^\mu_{\alpha\beta}+N^\mu_{\alpha\beta}-M^\mu_{\alpha\beta})+f^{-1}\textbf{G}^s_\alpha C^r_{ls}(f_r\delta^\gamma_\beta
\nonumber\\
\!\!\!\!&&\!\!\!\!-f\textbf{G}^\kappa_rC^\gamma_{\beta\kappa})
-f^{-1}(f_l
\delta^\mu_\alpha
-f\textbf{G}^\kappa_l
C^\mu_{\alpha\kappa})
(F^\gamma_{\beta\mu}+N^\gamma_{\beta\mu}-M^\gamma_{\beta\mu}).
\end{eqnarray}
\begin{eqnarray}
\textbf{R}^{\ \gamma}_{j\
\beta\lambda}\!\!\!\!&=&\!\!\!\!\Big\{\frac{\D}{\D
u^\lambda}(f^{-1}f_j
\delta^\gamma_\beta-\textbf{G}^\alpha_j
C^\gamma_{\alpha\beta})+f^{-1}(F^\gamma_{\alpha\lambda}+N^\gamma_{\alpha\lambda}-M^\gamma_{\alpha\lambda})
\nonumber\\
\!\!\!\!&&\!\!\!\!(f_j
\delta^\alpha_\beta-f\textbf{G}^\nu_j
C^\alpha_{\beta\nu})-f^{-1}\textbf{G}^m_\beta C^r_{jm}
(f_r
\delta^\gamma_\lambda
-f\textbf{G}^\alpha_r
C^\gamma_{\lambda\alpha})\Big\}-\mathfrak{C}^\beta_\lambda.
\\
\textbf{R}^{\ \gamma}_{\alpha\
kl}\!\!\!\!&=&\!\!\!\!\Big\{\frac{\D}{\D
x^l}(f^{-1}f_k
\delta^\gamma_\alpha-\textbf{G}^\mu_k C^\gamma_{\alpha\mu}
)+f^{-2}(f_l
\delta^\gamma_\beta-f\textbf{G}^\kappa_lC^\gamma_{\beta\kappa} )\nonumber\\
\!\!\!\!&&\!\!\!\! (f_k
\delta^\beta_\alpha-f\textbf{G}^\nu_k
C^\beta_{\alpha\nu})-f^{-2}\textbf{G}^s_\mu \textbf{G}^m_\alpha
g^{\gamma\mu}C^h_{ls}C_{hkm}
\Big\}-\mathfrak{C}^k_l.
\end{eqnarray}
\begin{eqnarray}
\textbf{R}^{\ \gamma}_{\alpha\
\beta\lambda}\!\!\!\!&=&\!\!\!\!R^{\ \gamma}_{\alpha\
\beta\lambda}-\Big\{\{M^\kappa_\lambda\frac{\partial
F^\gamma_{\alpha\beta}}{\partial v^\kappa} +\frac{\D
M^\gamma_{\alpha\beta}}{\D
u^\lambda}+F^\gamma_{\lambda\mu}M^\mu_{\alpha\beta}+M^\gamma_{\lambda\mu}F^\mu_{\alpha\beta}
-M^\gamma_{\lambda\mu}M^\mu_{\alpha\beta}\nonumber\\
\!\!\!\!&&\!\!\!\!+\frac{\D N^\gamma_{\alpha\lambda}}{\D
u^\beta}+F^\gamma_{\beta\mu}N^\mu_{\alpha\lambda}+N^\gamma_{\beta\mu}F^\mu_{\alpha\lambda}
+N^\gamma_{\beta\mu}N^\mu_{\alpha\lambda}+N^\gamma_{\lambda\mu}M^\mu_{\alpha\beta}
+N^\mu_{\alpha\beta}M^\gamma_{\lambda\mu}
\nonumber\\
\!\!\!\!&&\!\!\!\!+(g^{rs}\delta^\gamma_\lambda f_s
 -fg^{rs}\textbf{G}^\kappa_sC^\gamma_{\lambda\kappa}
)(g_{\alpha\beta}f_r
-f\textbf{G}^\mu_rC_{\alpha\beta\mu})\}
-\mathfrak{C}^\beta_\lambda\Big\}\label{cur1}
\end{eqnarray}
where
\begin{eqnarray*}
M^i_{jk}\!\!\!\!&=&\!\!\!\! M^r_kC^i_{jr}+M^r_jC^i_{kr}-M^r_hg^{ih}C_{jkr},\\
M^\gamma_{\alpha\beta}\!\!\!\!&=&\!\!\!\!M^\mu_\beta C^\gamma_{\alpha\mu}+M^\mu_\alpha C^\gamma_{\beta\mu}-M^\mu_\nu g^{\gamma\nu}C_{\alpha\beta\mu},\\
N^\gamma_{\alpha\beta}\!\!\!\!&=&\!\!\!\!f^{-1}(f_\beta\delta^\gamma_\alpha+f_\alpha\delta^\gamma_\beta-f_\lambda g^{\gamma\lambda}g_{\alpha\beta})
\end{eqnarray*}
and $\mathfrak{C}^i_j$ denotes the interchange of indices $i$, $j$ and subtraction.
\end{lem}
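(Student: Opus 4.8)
The plan is to insert the explicit horizontal coefficients $\textbf{F}^a_{bc}$ listed in Corollary \ref{cor} into the curvature formula (\ref{cur}) and to expand it separately for each admissible type of the free indices. Observe first that (\ref{cur}) has the form $\textbf{R}^{\ a}_{b\ cd}=\big(\frac{\D \textbf{F}^a_{bc}}{\D \textbf{x}^d}+\textbf{F}^a_{de}\textbf{F}^e_{bc}\big)-(c\leftrightarrow d)$, which is exactly the antisymmetrization denoted $\mathfrak{C}$ in the statement; so it suffices to compute the bracketed part for each choice and let $\mathfrak{C}$ supply the rest. Each of the free indices $a,b,c,d$ ranges over either the Latin ($M_1$) block or the Greek ($M_2$) block, and the operator $\D/\D\textbf{x}^d$ specializes to $\D/\D x^l$ or $\D/\D u^\lambda$ according to the type of $d$ (via (\ref{dec1})). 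Crucially, the contracted index $e$ in each product must be split as $e=h$ (Latin) plus $e=\gamma$ (Greek), so every quadratic term becomes a sum of two pieces.

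The decisive simplification comes from Lemma \ref{lemC}: since all mixed Cartan components vanish, the off-diagonal coefficients (\ref{hor3})--(\ref{hor6}) are built from a single $\textbf{G}^r_\alpha$ or a single $f$-derivative, and this forces most of the split products to drop out by type-matching. I would record once and for all which of the two summands in $\textbf{F}^a_{de}\textbf{F}^e_{bc}$ survives for each index pattern, and keep the abbreviations $M^i_{jk}$, $M^\gamma_{\alpha\beta}$, $N^\gamma_{\alpha\beta}$ together with the components $\textbf{G}^a_b$ and $\textbf{G}^c_{ab}$ of the earlier Lemma and Corollary as the working vocabulary.

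The core of the argument is the two pure cases. For $\textbf{R}^{\ i}_{j\ kl}$ I would substitute $\textbf{F}^i_{jk}=F^i_{jk}-M^i_{jk}$ from (\ref{hor2}), convert the horizontal derivative $\D g/\D x$ into $\delta g/\delta x$ plus $M$-terms exactly as in the proof of Corollary \ref{cor}, and observe that the subfamily of terms built purely from $F^i_{jk}$ reassembles into the $M_1$-Riemann curvature $R^{\ i}_{j\ kl}$; the sole genuinely new contribution is the Greek part $e=\gamma$ of $\textbf{F}^i_{le}\textbf{F}^e_{jk}$, which by (\ref{hor3}) and (\ref{hor5}) produces the term $f^{-2}g^{\alpha\gamma}\textbf{G}^r_\alpha\textbf{G}^m_\gamma C^i_{lr}C_{jkm}$ of (\ref{cur0}), while the remaining terms collect into the displayed bracket. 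The all-Greek case $\textbf{R}^{\ \gamma}_{\alpha\ \beta\lambda}$ proceeds identically, except that $\textbf{F}^\gamma_{\alpha\beta}$ in (\ref{hor7}) now carries the extra tensor $N^\gamma_{\alpha\beta}$; consequently the base curvature $R^{\ \gamma}_{\alpha\ \beta\lambda}$ is accompanied by the full $N$-derivative and $N$-quadratic terms exhibited in (\ref{cur1}). The remaining mixed coefficients are then obtained by the same mechanical substitution, which I would organize from the least to the most involved index pattern, reusing the surviving-summand table built in the previous step.

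I expect the obstacle to be entirely combinatorial rather than conceptual: the difficulty lies in carrying the very large number of terms through both the $e$-splitting and the $\mathfrak{C}^c_d$ antisymmetrization without sign or index slips, and in correctly recognizing which aggregates of $F$-only terms close up into $R^{\ i}_{j\ kl}$ and $R^{\ \gamma}_{\alpha\ \beta\lambda}$ as opposed to those that must remain as explicit $f$-dependent corrections. The homogeneity relations of the Proposition and the contraction identity of Lemma \ref{lem} serve mainly as consistency checks, since contracting the resulting $\textbf{R}^{\ a}_{b\ cd}$ with $\textbf{y}^b$ must reproduce the $\textbf{R}^a_{\ cd}$ already computed.
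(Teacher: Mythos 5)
Your proposal is correct and coincides with the paper's own (largely implicit) argument: the paper states the lemma as the outcome of substituting the Berwald-connection coefficients $\textbf{F}^a_{bc}$ of Corollary \ref{cor} into (\ref{cur}), splitting the contracted index into Latin and Greek blocks, using the vanishing of the mixed Cartan components (Lemma \ref{lemC}), and converting $\D/\D\textbf{x}$ into $\delta/\delta x$ plus $M$-terms exactly as in the proof of Corollary \ref{cor} so that the $F$-only aggregates close up into $R^{\ i}_{j\ kl}$ and $R^{\ \gamma}_{\alpha\ \beta\lambda}$. Your organization by index pattern, the surviving-summand bookkeeping, and the consistency check via Lemma \ref{lem} are precisely the computation the paper performs.
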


\bigskip

By the Theorem \ref{thm5}, we have the following.
\begin{thm}
Let $(M_1\times{}_{f}M_2,F)$ be a flat twisted
product Finsler manifold and $(M_1, F_1)$ is Riemannian. If $f$ is a function on $M_2$, only, then $(M_1, F_1)$ is locally flat.
\end{thm}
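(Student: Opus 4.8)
The plan is to extract the conclusion from the single block $\textbf{R}^{\ i}_{j\ kl}$ of the Riemannian curvature already computed in Lemma \ref{thm5}, reading \emph{flat} as the vanishing of the full Berwald-connection curvature $\textbf{R}^{\ a}_{b\ cd}$. First I would specialize the formula (\ref{cur0}) to the present hypotheses and show that it collapses to the intrinsic Riemannian curvature of $(M_1,F_1)$.

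The heart of the argument is that $(M_1,F_1)$ being Riemannian forces its Cartan tensor to vanish identically, $C^i_{jk}=0$ (equivalently $C^{rh}_i=0$ and $C_{jkr}=0$), since a Finsler metric is Riemannian precisely when $\textbf{C}=0$. Substituting this into the auxiliary quantities of Corollary \ref{cor} and Lemma \ref{thm5} I get $M^r_l=C^{rh}_l ff_hF_2^2=0$, hence $M^i_{jk}=M^r_kC^i_{jr}+M^r_jC^i_{kr}-M^r_hg^{ih}C_{jkr}=0$ as a function, so that $\frac{\D M^i_{jk}}{\D x^l}=0$ as well; and the last summand of (\ref{cur0}), carrying the factor $C^i_{lr}C_{jkm}$, also drops out. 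In addition, the hypothesis that $f$ lives on $M_2$ alone gives $f_i=\partial f/\partial x^i=0$, so by (\ref{spray9}) the twist term $\textbf{G}^\alpha_j=f^{-1}f_jv^\alpha$ vanishes, while (\ref{hor2})--(\ref{hor3}) reduce the mixed connection coefficients to $\textbf{F}^i_{jk}=F^i_{jk}$ and $\textbf{F}^i_{j\gamma}=-\textbf{G}^r_\gamma C^i_{jr}=0$. Together these ensure that the horizontal derivative $\frac{\D}{\D x^l}$ and the connection data appearing in $\textbf{R}^{\ i}_{j\ kl}$ coincide with the intrinsic $\frac{\delta}{\delta x^l}$ and $F^i_{jk}$ of $(M_1,F_1)$.

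Collecting these vanishings in (\ref{cur0}) leaves exactly $\textbf{R}^{\ i}_{j\ kl}=R^{\ i}_{j\ kl}$, with $R^{\ i}_{j\ kl}$ the Riemannian curvature of $(M_1,F_1)$. The conclusion is then immediate: flatness of $(M_1\times_f M_2,F)$ gives $\textbf{R}^{\ i}_{j\ kl}=0$, hence $R^{\ i}_{j\ kl}=0$; and a Riemannian manifold whose curvature tensor vanishes is locally isometric to Euclidean space, i.e. $(M_1,F_1)$ is locally flat.

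I expect the only real work to be bookkeeping: checking that \emph{every} correction term in (\ref{cur0}) is at least linear in the $M_1$ Cartan tensor (so that $C^i_{jk}=0$ annihilates all of them) and that no residual $f$-dependence survives once $f_i=0$ is imposed. This is routine given Corollary \ref{cor} and Lemma \ref{thm5}; the closing implication ``zero Riemannian curvature $\Rightarrow$ local flatness'' is classical and needs no Finslerian input.
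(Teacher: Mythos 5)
Your proposal is correct and follows essentially the same route as the paper, which states this theorem as an immediate consequence of the curvature formula (\ref{cur0}) in Lemma \ref{thm5}: since $(M_1,F_1)$ Riemannian forces $C^i_{jk}=0$, every correction term (all being at least linear in $M^r_i=C^{rh}_iff_hF_2^2$ or in $C^i_{lr}$) vanishes, so $\textbf{R}^{\ i}_{j\ kl}=R^{\ i}_{j\ kl}$, and flatness of the total space then kills the Riemannian curvature of $M_1$. Your reading of ``flat'' as $\textbf{R}^{\ a}_{b\ cd}=0$ matches the paper's usage in the companion theorem for $(M_2,F_2)$, so the argument is complete as written.
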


\smallskip

Similarly, we get the following.

\begin{thm}
Let $(M_1\times{}_{f}M_2,F)$ be a flat twisted
product Finsler manifold and $(M_2, F_2)$ is Riemannian. If $f$ is a function on $M_1$, only, then $(M_2, F_2)$ is a space of positive constant curvature $||grad f||^2$.
\end{thm}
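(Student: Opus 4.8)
The plan is to extract the conclusion entirely from the single curvature component $\textbf{R}^{\ \gamma}_{\alpha\ \beta\lambda}$ recorded in Lemma \ref{thm5}, i.e.\ equation (\ref{cur1}), after feeding in the two hypotheses. Flatness of $(M_1\times{}_fM_2,F)$ means that \emph{every} component of the Riemannian curvature vanishes, so in particular $\textbf{R}^{\ \gamma}_{\alpha\ \beta\lambda}=0$. I would therefore substitute the hypotheses into (\ref{cur1}), watch the vast majority of the terms collapse, and recognise what remains as the constant-curvature expression for the second factor.

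First I would use that $(M_2,F_2)$ is Riemannian: its Cartan tensor vanishes, $C^\gamma_{\alpha\beta}=0$, hence $C_{\alpha\beta\mu}=0$ and all their $v$-derivatives are zero, and moreover the Christoffel symbols $F^\gamma_{\alpha\beta}$ depend on $u$ only, so $\partial F^\gamma_{\alpha\beta}/\partial v^\kappa=0$. This annihilates every summand of (\ref{cur1}) that carries a Cartan factor of $M_2$, as well as the term $M^\kappa_\lambda\,\partial F^\gamma_{\alpha\beta}/\partial v^\kappa$. Next I would use that $f$ is a function on $M_1$ alone, so $f_\gamma=\partial f/\partial u^\gamma=0$; substituting into $N^\gamma_{\alpha\beta}=f^{-1}(f_\beta\delta^\gamma_\alpha+f_\alpha\delta^\gamma_\beta-f_\lambda g^{\gamma\lambda}g_{\alpha\beta})$ gives $N^\gamma_{\alpha\beta}=0$, and since $M^\gamma_{\alpha\beta}=M^\mu_\beta C^\gamma_{\alpha\mu}+M^\mu_\alpha C^\gamma_{\beta\mu}-M^\mu_\nu g^{\gamma\nu}C_{\alpha\beta\mu}$ is assembled from Cartan factors of $M_2$, it too vanishes. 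Consequently every term inside the brace of (\ref{cur1}) dies except the last product, which, because $C^\gamma_{\lambda\kappa}=C_{\alpha\beta\mu}=0$, reduces to $(g^{rs}f_s)(g_{\alpha\beta}f_r)\delta^\gamma_\lambda=\|\mathrm{grad}\,f\|^2\,\delta^\gamma_\lambda g_{\alpha\beta}$, where $\|\mathrm{grad}\,f\|^2=g^{rs}f_rf_s=f^rf_r$.

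Applying the interchange $\mathfrak{C}^\beta_\lambda$ to this surviving term and imposing $\textbf{R}^{\ \gamma}_{\alpha\ \beta\lambda}=0$ would then yield
\[
R^{\ \gamma}_{\alpha\ \beta\lambda}=\|\mathrm{grad}\,f\|^2\,(\delta^\gamma_\lambda g_{\alpha\beta}-\delta^\gamma_\beta g_{\alpha\lambda}),
\]
which is exactly the Riemann tensor of a space of constant sectional curvature $\|\mathrm{grad}\,f\|^2$. The only genuinely delicate point, and the one I expect to be the main obstacle, is to justify that $\|\mathrm{grad}\,f\|^2$ is a true \emph{constant}. This is a separation-of-variables observation: the left-hand side $R^{\ \gamma}_{\alpha\ \beta\lambda}$ and the tensor $\delta^\gamma_\lambda g_{\alpha\beta}-\delta^\gamma_\beta g_{\alpha\lambda}$ live on $TM_2^{\circ}$, whereas the scalar $\|\mathrm{grad}\,f\|^2$ is a function on $TM_1^{\circ}$; as the identity must hold for all values of the two independent sets of variables, the scalar is forced to be constant. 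Finally, for a proper twisted product $f$ is non-constant, so $\mathrm{grad}\,f\neq0$ and the constant is strictly positive, giving precisely the asserted statement. The difficulty is thus not the algebra but the careful bookkeeping of which terms of (\ref{cur1}) survive, together with the constancy argument at the end.
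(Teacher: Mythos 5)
Your proposal is correct and is essentially the paper's own argument: the paper's proof likewise specializes (\ref{cur1}) under the two hypotheses ($C_{\alpha\beta\gamma}=0$ kills every Cartan-bearing term and $f_\alpha=0$ kills every $M^{\gamma}_{\alpha\beta}$- and $N^{\gamma}_{\alpha\beta}$-term), so that only $(g^{rs}\delta^\gamma_\lambda f_s)(g_{\alpha\beta}f_r)$ and its $\beta\leftrightarrow\lambda$ partner survive, and then imposes $\textbf{R}^{\ \gamma}_{\alpha\ \beta\lambda}=0$ to read off $R^{\ \gamma}_{\alpha\ \beta\lambda}=||grad f||^2(\delta^\gamma_\lambda g_{\alpha\beta}-\delta^\gamma_\beta g_{\alpha\lambda})$ up to the paper's sign convention (its displayed intermediate equation carries the opposite sign from what (\ref{cur1}) actually yields, which does not affect the conclusion). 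Your write-up is in fact more careful than the paper's two-line proof, which neither performs the term-by-term cancellation explicitly nor addresses why $||grad f||^2$ is a genuine constant (your separation-of-variables step, noting the left side lives on $TM_2^{\circ}$ and the scalar on $TM_1^{\circ}$) or why it is positive.
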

\begin{proof}
Since $M_2$ is Riemannain and $f$ is a function on $M_1$, then by (\ref{cur1}), we obtain
\begin{equation}
\textbf{R}^{\ \gamma}_{\alpha\
\beta\lambda}=R^{\ \gamma}_{\alpha\
\beta\lambda}+||grad f||^2(\delta^\gamma_\lambda g_{\alpha\beta}-\delta^\gamma_\beta g_{\alpha\lambda}).
\end{equation}
Since $(M_1\times{}_{f}M_2,F)$ is flat, then  $\textbf{R}^{\ \gamma}_{\alpha\
\beta\lambda}=0$. Thus the proof is complete.
\end{proof}

\bigskip

\begin{thm}
Let $(M_1\times{}_{f}M_2,F)$ be a twisted product Riemannian manifold and $f$ be a function on $M_2$, only. Then $(M_1\times{}_{f}M_2,F)$ is flat if and only if $(M_1, F_1)$ is flat and the Riemannian curvature of $(M_2, F_2)$ satisfies in the following equation:
\begin{equation}
R^{\ \gamma}_{\alpha\
\beta\lambda}=\{\frac{\D N^\gamma_{\alpha\lambda}}{\D
u^\beta}+F^\gamma_{\beta\mu}N^\mu_{\alpha\lambda}+N^\gamma_{\beta\mu}F^\mu_{\alpha\lambda}
+N^\gamma_{\beta\mu}N^\mu_{\alpha\lambda}\}
-\mathfrak{C}^\beta_\lambda.
\end{equation}
\end{thm}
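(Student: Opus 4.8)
The plan is to specialize the general curvature formula for $\textbf{R}^{\ \gamma}_{\alpha\ \beta\lambda}$ given in equation (\ref{cur1}) to the situation at hand, namely a \emph{Riemannian} twisted product in which the twisted function $f$ depends only on the $M_2$ coordinates. The biconditional then follows by reading off which terms must vanish.

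First I would impose the Riemannian hypothesis. By Corollary \ref{1}, the twisted product $(M_1\times_f M_2,F)$ being Riemannian is equivalent to both $(M_1,F_1)$ and $(M_2,F_2)$ being Riemannian, so all the Cartan tensors $C^k_{ij}$, $C^\gamma_{\alpha\beta}$ (and hence the $C$-dependent pieces of the $\textbf{G}$'s, the $\textbf{F}$'s, and the $M$'s) vanish identically. This collapses the $M^\bullet_{\bullet\bullet}$-terms and the $\textbf{G}^\bullet_\bullet C_{\bullet\bullet\bullet}$-terms throughout (\ref{cur1}). Second I would impose that $f$ is a function on $M_2$ only, so $f_i=\partial f/\partial x^i=0$ while $f_\gamma=\partial f/\partial u^\gamma$ survives. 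I expect the main simplification to be that the mixed horizontal-derivative couplings between the two factors drop out, decoupling the $M_1$-curvature from the $M_2$-curvature.

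The key computation is then the flatness condition. Setting $\textbf{R}^{\ \gamma}_{\alpha\ \beta\lambda}=0$ in the reduced form of (\ref{cur1}) leaves exactly
\[
R^{\ \gamma}_{\alpha\ \beta\lambda}=\Big\{\frac{\D N^\gamma_{\alpha\lambda}}{\D u^\beta}+F^\gamma_{\beta\mu}N^\mu_{\alpha\lambda}+N^\gamma_{\beta\mu}F^\mu_{\alpha\lambda}+N^\gamma_{\beta\mu}N^\mu_{\alpha\lambda}\Big\}-\mathfrak{C}^\beta_\lambda,
\]
which is precisely the asserted equation for the $M_2$-curvature. For the $M_1$-factor I would look at $\textbf{R}^{\ i}_{j\ kl}$ in (\ref{cur0}): under the two hypotheses the $M$-corrections and Cartan-couplings vanish, leaving $\textbf{R}^{\ i}_{j\ kl}=R^{\ i}_{j\ kl}$, so the flatness of the twisted product forces $R^{\ i}_{j\ kl}=0$, i.e.\ $(M_1,F_1)$ is flat. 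Conversely, given that $(M_1,F_1)$ is flat and that $(M_2,F_2)$ satisfies the displayed relation, substituting back into the reduced curvature components of Lemma \ref{thm5} shows every block $\textbf{R}^{\ a}_{b\ cd}$ vanishes, so $(M_1\times_f M_2,F)$ is flat.

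The main obstacle I anticipate is bookkeeping rather than conceptual: one must verify that \emph{all} the remaining mixed blocks — $\textbf{R}^{\ i}_{\alpha\ \beta\lambda}$, $\textbf{R}^{\ \gamma}_{j\ \beta\lambda}$, $\textbf{R}^{\ \gamma}_{\alpha\ kl}$, and the others listed in Lemma \ref{thm5} — indeed vanish identically under the two hypotheses, so that the flatness of the full product really reduces to the two stated conditions and nothing more. Here I would use that every one of those mixed components is built entirely out of the quantities $\textbf{G}^r_\beta C^\bullet_{\bullet\bullet}$, $M^\bullet_{\bullet\bullet}$, and $f_i$, each of which is zero: the Cartan-tensor factors vanish by the Riemannian assumption, and the $f_i$ factors vanish because $f$ is independent of the $M_1$-coordinates. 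Confirming this term-by-term annihilation in each block is the routine but essential step that makes the ``if and only if'' genuinely sharp.
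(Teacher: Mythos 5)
Your proposal is correct, and it is essentially the argument the paper intends: the paper states this theorem without any proof, directly after Lemma \ref{thm5}, and the proofs of the two neighbouring theorems show that the intended argument is exactly your specialization of the curvature blocks under the hypotheses ``both factors Riemannian'' (via Corollary \ref{1}) and $f_i=0$.

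One imprecision in your justification is worth repairing, although it does not change the outcome. You claim the reduction follows because every correction term is of the form $\textbf{G}^r_\beta C^\bullet_{\bullet\bullet}$, a three-index $M^\bullet_{\bullet\bullet}$, or carries a factor $f_i$. That is not literally true in two places. First, in (\ref{cur1}) the term $M^\kappa_\lambda\frac{\partial F^\gamma_{\alpha\beta}}{\partial v^\kappa}$ contains the \emph{two-index} $M^\kappa_\lambda$, which does \emph{not} vanish under your hypotheses: since $f_\gamma\neq 0$ it reduces to $f^{-1}(f_\gamma v^\gamma\delta^\kappa_\lambda-g^{\kappa\gamma}f_\gamma v_\lambda+f_\lambda v^\kappa)$. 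Second, the mixed blocks contain cross-derivatives such as $\frac{\D}{\D u^\beta}F^i_{jl}$ in $\textbf{R}^{\ i}_{j\ \beta l}$ and $\frac{\D}{\D x^l}(F^\gamma_{\alpha\beta}+N^\gamma_{\alpha\beta})$ in $\textbf{R}^{\ \gamma}_{\alpha\ \beta l}$, which are none of your three listed types. Both families nevertheless vanish, but for a reason you must state explicitly: in the Riemannian case $F^i_{jl}$ and $F^\gamma_{\alpha\beta}$ are Christoffel symbols depending only on $x$, respectively only on $u$ (and $N^\gamma_{\alpha\beta}$ depends only on $u$ once $f=f(u)$), so they are annihilated by the fibre derivative $\partial/\partial v^\kappa$ and by horizontal derivatives taken across the other factor. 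With that one observation added, your term-by-term reduction is exact and yields the stated biconditional.
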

%------------------------------------------------------------------------------------------------------------
\section{Twisted Product Finsler manifolds With Non-Riemannian Curvature Properties}
%------------------------------------------------------------------------------------------------------------
There are several important non-Riemannian  quantities such as the  Berwald curvature ${\bf B}$, the mean Berwald curvature ${\bf E}$ and the Landsberg curvature ${\bf L}$, etc \cite{TP1}. They all vanish  for Riemannian metrics, hence they are said to be   non-Riemannian. In this section, we find some necessary and sufficient conditions under which a  twisted product Riemannian manifold are Berwaldian, of isotropic Berwald curvature, of isotropic mean Berwald curvature. First, we prove the following.
\begin{lem}\label{Berwald}
Let $(M_1\times_fM_2, F)$ be a twisted product Finsler manifold . Then the coefficients of Berwald curvature  are as follows:
\begin{eqnarray}
\textbf{B}^\gamma_{\alpha\beta\lambda}\!\!\!\!&=&\!\!\!\!B^\gamma_{\alpha\beta\lambda}
+f^{-1}(C^{\gamma\nu}_{\lambda;\alpha;\beta}f_\nu F_2^2+2C^{\gamma\nu}_{\alpha;\beta}f_\nu v_\lambda+2C^{\gamma\nu}_{\alpha;\lambda}f_\nu v_\beta\nonumber\\
\!\!\!\!&&\!\!\!\!+2C^{\gamma\nu}_\alpha f_\nu g_{\lambda\beta}+2C^{\gamma\nu}_{\lambda;\beta} f_\nu v_\alpha+2C^{\gamma\nu}_\beta f_\nu g_{\lambda\alpha}\nonumber\\
\!\!\!\!&&\!\!\!\!+2C^{\gamma\nu}_\lambda f_\nu g_{\alpha\beta}-2C_{\alpha\beta\lambda}f^\gamma),\\
\textbf{B}^k_{ijl}\!\!\!\!&=&\!\!\!\!B^k_{ijl}+fC^{kh}_{l;j;i}f_hF^2_2
,\label{3}\\
\textbf{B}^k_{i\beta
l}\!\!\!\!&=&\!\!\!\!2fC^{kh}_{i;l}f_hv_\beta
,\\
\textbf{B}^k_{\alpha\beta
l}\!\!\!\!&=&\!\!\!\!2fg_{\alpha\beta}C^{kh}_lf_h,\label{2}\\
\textbf{B}^k_{\alpha\beta\lambda}\!\!\!\!&=&\!\!\!\!-2fC_{\alpha\beta\lambda}f^k,\label{1}\\
\textbf{B}^\gamma_{i\beta\lambda}\!\!\!\!&=&\!\!\!\!\textbf{B}^\gamma_{ij\lambda}
=\textbf{B}^\gamma_{ijk}=0.\label{4}
\end{eqnarray}
\end{lem}
Let $(M_1\times{}_{f}M_2,F)$ is a Berwald manifold. Then we have
$\textbf{B}^d_{abc}=0$. By  using (\ref{1}),  we get
\[
C_{\alpha\beta\lambda}f^k=0.
\]
Multiply this equation in $g_{kr}$ we obtain
\[
C_{\alpha\beta\lambda}f_r=0.
\]
Thus if $f$ is not constant on $M_1$, then we have $C_{\alpha\beta\lambda}=0$. Also, from (\ref{2}) we result that
\[
C^{kh}_lf_h=0.
\]
Differentiating this equation with respect $y^j$ gives us
\[
C^{kh}_{l;j}f_h=0.
\]
Similarly we obtain
\[
C^{kh}_{l;j;i}f_h=0.
\]
Setting the last equation in (\ref{3}) implies that $B^k_{ijl}=0$, i. e., $(M_1, F_1)$ is Berwaldian. These explanations give us the following theorem.
\begin{thm}
Let $(M_1\times_fM_2, F)$ be a twisted product Finsler manifold and $f$ is not constant on $M_1$. Then $(M_1\times_fM_2, F)$ is Berwaldian if and only if $(M_1, F_1)$ is Berwaldian, $(M_2, F_2)$ is Riemannian and the equation $C^{kh}_lf_h=0$ is hold.
\end{thm}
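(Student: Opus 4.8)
The plan is to build the entire argument on Lemma \ref{Berwald}, which already records every component of the Berwald curvature $\textbf{B}^d_{abc}$ of the twisted product in terms of data on the two factors. Since $(M_1\times_f M_2, F)$ is Berwaldian exactly when all of these components vanish identically, I would prove both implications simply by inspecting the six families of coefficients listed there and reading off what their vanishing forces, and conversely which hypotheses force their vanishing.

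For the forward implication I would start from $\textbf{B}^d_{abc}=0$. Equation (\ref{1}) gives $\textbf{B}^k_{\alpha\beta\lambda}=-2fC_{\alpha\beta\lambda}f^k=0$; lowering the index $k$ with $g_{kr}$ and using $f>0$ turns this into $C_{\alpha\beta\lambda}f_r=0$. Here the hypothesis that $f$ is not constant on $M_1$ is essential: it supplies an index $r$ with $f_r=\partial f/\partial x^r\neq 0$, forcing $C_{\alpha\beta\lambda}=0$, i.e.\ $(M_2,F_2)$ Riemannian. Next, equation (\ref{2}) reads $\textbf{B}^k_{\alpha\beta l}=2fg_{\alpha\beta}C^{kh}_lf_h=0$; since $f>0$ and $g_{\alpha\beta}$ is positive definite, this is exactly the condition $C^{kh}_lf_h=0$. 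Differentiating this identity in $y^j$ (recalling that $f_h$ does not depend on the fibre coordinates) yields $C^{kh}_{l;j}f_h=0$, and differentiating once more gives $C^{kh}_{l;j;i}f_h=0$; substituting the latter into (\ref{3}) collapses it to $\textbf{B}^k_{ijl}=B^k_{ijl}$, so $B^k_{ijl}=0$ and $(M_1,F_1)$ is Berwaldian. This is the content of the explanations preceding the theorem.

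For the converse I would assume the three conditions and verify that each family vanishes. The condition $C_{\alpha\beta\lambda}=0$ (Riemannian $M_2$) kills (\ref{1}) outright and annihilates every Cartan term in $\textbf{B}^\gamma_{\alpha\beta\lambda}$, where the leading term $B^\gamma_{\alpha\beta\lambda}$ also vanishes because a Riemannian metric has zero Berwald curvature. The condition $C^{kh}_lf_h=0$, together with its first $y$-derivative $C^{kh}_{i;l}f_h=0$, clears (\ref{2}) and $\textbf{B}^k_{i\beta l}$, while its second derivative combined with $B^k_{ijl}=0$ clears (\ref{3}); the remaining family (\ref{4}) is identically zero. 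Hence all components vanish and the product is Berwaldian.

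The main obstacle I anticipate is organizing the converse cleanly rather than any single hard estimate: one must confirm that the lone hypothesis $C^{kh}_lf_h=0$ genuinely propagates, via fibre differentiation, to all the derivative expressions $C^{kh}_{l;j}f_h$ and $C^{kh}_{l;j;i}f_h$ appearing in the curvature formulas, and that the Riemannian condition on $M_2$ really annihilates every one of the numerous terms in the most intricate coefficient $\textbf{B}^\gamma_{\alpha\beta\lambda}$. Once these two observations are in place, the remaining verification is routine case-checking over the families of Lemma \ref{Berwald}.
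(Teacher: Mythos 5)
Your proof is correct and takes essentially the same approach as the paper: the forward direction reproduces the paper's argument exactly (use the component $\textbf{B}^k_{\alpha\beta\lambda}=-2fC_{\alpha\beta\lambda}f^k$ from Lemma \ref{Berwald} together with non-constancy of $f$ to get $C_{\alpha\beta\lambda}=0$, use $\textbf{B}^k_{\alpha\beta l}$ to get $C^{kh}_lf_h=0$, differentiate fibrewise, and conclude $B^k_{ijl}=0$ from equation (\ref{3})). The only difference is that you verify the converse component-by-component, a routine check that the paper leaves implicit in the phrase ``these explanations give us the following theorem,'' and your verification of it is sound.
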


\bigskip

But if $f$ is constant on $M_1$, i.e., $f_i=0$, then we get the following.
\begin{thm}
Let $(M_1\times_fM_2, F)$ be a twisted product Finsler manifold and $f$ is constant on $M_1$. Then $(M_1\times_fM_2,F)$ is Berwaldian if and only if $(M_1, F_1)$ is Berwaldian and the Berwald curvature of $(M_2, F_2)$ satisfies in the following equation:
\begin{eqnarray}
B^\gamma_{\alpha\beta\lambda}\!\!\!\!&=&\!\!\!\!-f^{-1}(C^{\gamma\nu}_{\beta;\alpha;\lambda}f_\nu F_2^2+2C^{\gamma\nu}_{\beta;\alpha}f_\nu v_\lambda+2C^{\gamma\nu}_{\lambda;\alpha}f_\nu v_\beta+2C^{\gamma\nu}_{\alpha}f_\nu g_{\lambda\beta}\nonumber\\
\!\!\!\!&&\!\!\!\!+2C^{\gamma\nu}_{\beta;\lambda}f_\nu v_\alpha+2C^{\gamma\nu}_{\beta}f_\nu g_{\lambda\alpha}+2C^{\gamma\nu}_\lambda f_\nu g_{\alpha\beta}-2g^{\gamma\nu}C_{\alpha\beta\lambda}f_\nu)
\end{eqnarray}
\end{thm}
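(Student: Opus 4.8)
The plan is to read the conclusion directly off the Berwald curvature coefficients already computed in Lemma \ref{Berwald}, using that $(M_1\times_fM_2,F)$ is Berwaldian precisely when every component $\textbf{B}^d_{abc}$ vanishes. The hypothesis that $f$ is constant on $M_1$ means $f_i=\partial f/\partial x^i=0$, and hence $f^i=g^{ih}f_h=0$ as well, whereas the $M_2$-derivatives $f_\gamma=\partial f/\partial u^\gamma$ need not vanish. First I would substitute $f_i=0$ and $f^i=0$ into each of the formulas of Lemma \ref{Berwald} and record which components collapse.

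Under this substitution the mixed blocks die automatically: the term $fC^{kh}_{l;j;i}f_hF_2^2$ in (\ref{3}) vanishes, so $\textbf{B}^k_{ijl}=B^k_{ijl}$; the coefficient $\textbf{B}^k_{i\beta l}=2fC^{kh}_{i;l}f_hv_\beta$ and the one in (\ref{2}) both carry a factor $f_h$ and so vanish; the coefficient in (\ref{1}) carries the factor $f^k$ and so vanishes; and the components in (\ref{4}) are already zero. The only coefficient that retains nontrivial content is $\textbf{B}^\gamma_{\alpha\beta\lambda}$, whose formula in Lemma \ref{Berwald} keeps all of its $f_\nu$-terms since $f$ may still depend on the $M_2$-coordinates.

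Consequently the system $\textbf{B}^d_{abc}=0$ reduces to exactly two equations: $B^k_{ijl}=0$, which by definition says $(M_1,F_1)$ is Berwaldian, and $\textbf{B}^\gamma_{\alpha\beta\lambda}=0$. Solving the latter for $B^\gamma_{\alpha\beta\lambda}$ and transposing the $f^{-1}(\cdots)$ bracket to the right-hand side yields the displayed equation; here I would use the symmetry of the Cartan tensor together with $f^\gamma=g^{\gamma\nu}f_\nu$ to rewrite the term $-2C_{\alpha\beta\lambda}f^\gamma$ as $-2g^{\gamma\nu}C_{\alpha\beta\lambda}f_\nu$ and to relabel the summation indices into the stated arrangement. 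For the converse, if $(M_1,F_1)$ is Berwaldian and the displayed equation holds, then $\textbf{B}^k_{ijl}=0$ and $\textbf{B}^\gamma_{\alpha\beta\lambda}=0$, while all remaining components already vanish because $f_i=f^i=0$, so $\textbf{B}^d_{abc}=0$ throughout and the product is Berwaldian.

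Since the heavy computation of the Berwald coefficients is packaged in Lemma \ref{Berwald}, there is no genuine analytic obstacle here; the only point demanding care is the bookkeeping in the last step, namely checking that the index reshuffling between the Lemma's expression for $\textbf{B}^\gamma_{\alpha\beta\lambda}$ and the theorem's displayed equation is legitimate under the symmetries of $C$ and $\textbf{B}$, and confirming that no surviving component other than these two imposes an additional constraint.
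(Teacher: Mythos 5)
Your proposal is correct and is essentially the paper's own argument: the paper states this theorem as an immediate consequence of Lemma \ref{Berwald} after observing that $f$ constant on $M_1$ means $f_i=0$ (hence $f^i=g^{ih}f_h=0$), which kills every mixed component and the correction term in $\textbf{B}^k_{ijl}$, leaving only $B^k_{ijl}=0$ and $\textbf{B}^\gamma_{\alpha\beta\lambda}=0$ as conditions. Your extra care about the index relabeling is sound, since $C^{\gamma\nu}_{\lambda;\alpha;\beta}=-\frac{1}{2}\,\partial^3 g^{\gamma\nu}/\partial v^\lambda\partial v^\alpha\partial v^\beta$ is totally symmetric in $\lambda,\alpha,\beta$, so the lemma's expression and the theorem's displayed equation agree term by term.
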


\bigskip

Here, we consider  twisted product Finsler manifold $(M_1\times_fM_2, F)$ of isotropic Berwald curvature.

\begin{thm}
Every isotropic Berwald twisted product Finsler manifold $(M_1\times_fM_2, F)$ is a Berwald manifold.
\end{thm}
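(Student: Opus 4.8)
The hypothesis is that the Berwald curvature of $(M_1\times_fM_2,F)$ has the isotropic form (\ref{IBCurve}), i.e.
\[
\textbf{B}^a_{\ bcd}=c\{F_{\textbf{y}^b\textbf{y}^c}\delta^a_{\ d}+F_{\textbf{y}^c\textbf{y}^d}\delta^a_{\ b}+F_{\textbf{y}^d\textbf{y}^b}\delta^a_{\ c}+F_{\textbf{y}^b\textbf{y}^c\textbf{y}^d}\textbf{y}^a\}
\]
for a scalar function $c=c(\textbf{x})$ on $M=M_1\times M_2$, the indices now running over the full range $\{1,\ldots,n_1+n_2\}$. The goal is to prove $c\equiv 0$, since then $\textbf{B}=0$ and $F$ is Berwaldian by definition.

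The plan is to read this identity against the mixed components of $\textbf{B}$ computed in Lemma \ref{Berwald}, choosing an index pattern for which the right-hand side collapses to a single term. The cleanest choice is the block with one $M_2$-index upstairs and three $M_1$-indices downstairs, for which (\ref{4}) gives $\textbf{B}^\gamma_{ijk}=0$. Specializing the isotropic form to $a=\gamma$, $(b,c,d)=(i,j,k)$, every Kronecker symbol $\delta^\gamma_{\ i},\delta^\gamma_{\ j},\delta^\gamma_{\ k}$ vanishes because a Greek and a Latin index lie in disjoint ranges, while $\textbf{y}^\gamma=v^\gamma$. Hence the three $\delta$-terms drop out and I am left with
\[
0=\textbf{B}^\gamma_{ijk}=c\,F_{y^iy^jy^k}\,v^\gamma .
\]

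It then remains to check that the factor $F_{y^iy^jy^k}$ is not identically zero, so that, choosing $v^\gamma\neq0$, the scalar $c$ must vanish. This is the only real point of the argument, and the hard part is that it must be carried out for the full twisted metric $F$ rather than for $F_1$. Fix a point $\textbf{x}_0=(x_0,u_0)$ and a velocity with $v\neq0$; then the positive quantity $B:=f^2(x_0,u_0)F_2^2(u_0,v)$ is constant in the variable $y$, and along the $M_1$-fibre $F=\sqrt{F_1^2(x_0,y)+B}$. A degree count on $F^2=F_1^2+B$ shows that the square root of a $2$-homogeneous function plus a strictly positive constant can never be a polynomial of degree $\le 2$ in $y$, so $F$ is not quadratic in $y$ and $F_{y^iy^jy^k}$ cannot vanish identically; concretely one differentiates
\[
F\,F_{y^iy^j}=g_{ij}-F^{-2}y_iy_j
\]
once more in $y^k$ to exhibit nonzero third derivatives even when $F_1$ is Riemannian. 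Choosing $y_0$ and a Greek index $\gamma$ with $F_{y^iy^jy^k}(\textbf{x}_0,\textbf{y}_0)\,v_0^\gamma\neq0$ forces $c(\textbf{x}_0)=0$, and since $\textbf{x}_0$ is arbitrary, $c\equiv 0$. Substituting back into (\ref{IBCurve}) yields $\textbf{B}^a_{\ bcd}=0$, so $(M_1\times_fM_2,F)$ is a Berwald manifold. The main obstacle is exactly the non-vanishing of $F_{y^iy^jy^k}$: the twisting term $B>0$ is what prevents $F$ from being quadratic in $y$, and this is what allows the factor multiplying $c$ to be cancelled.
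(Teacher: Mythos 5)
Your proof is correct and takes essentially the same route as the paper: both arguments evaluate the isotropic Berwald identity on the mixed component $\textbf{B}^\gamma_{\ ijk}$, which vanishes by (\ref{4}) of Lemma \ref{Berwald}, and thereby force $c=0$. The only difference is cosmetic — the paper writes the isotropic condition in its angular-metric form and contracts with $y^jy^k$ to get $cf^2F_1^2F_2^2=0$, whereas you use the Kronecker-delta form (\ref{IBCurve}) so that only $c\,F_{y^iy^jy^k}v^\gamma$ survives and then exclude $F_{y^iy^jy^k}\equiv 0$ by a degree count; your own fallback contraction $F_{y^iy^jy^k}y^iy^j=-3F^{-5}f^2F_1^2F_2^2\,y_k\neq 0$ reproduces the paper's computation exactly.
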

\begin{proof}
Let $(M_1\times_fM_2, F)$ be an isotropic Berwald manifold. Then we have
\[
\textbf{B}^d_{abc}=cF^{-1}\{\textbf{h}^d_a\textbf{h}_{bc}+\textbf{h}^d_b\textbf{h}_{ac}+\textbf{h}^d_c\textbf{h}_{ab}
+2\textbf{C}_{abc}\textbf{y}^d\},
\]
where $c=c(\textbf{x})$ is a function on $M$. Setting $a=j$, $b=k$, $c=l$, $d=\gamma$ and using (\ref{4}) imply that
\[
cF^{-1}\{\frac{3}{F^2}y_jy_ky_lv^\gamma-v^\gamma(y_jg_{kl}+y_kg_{jl}+y_lg_{jk})\}=0.
\]
Multiplying the above equation in $y^jy^k$, we derive that $cf^2F_1^2F_2^2=0$. Thus we have $c=0$, i.e., $(M_1\times_fM_2)$ is Berwaldian.
\end{proof}

\bigskip
Now, we are going to study twisted product Finsler manifold of isotropic mean Berwald curvature. For this work, we must compute  the coefficients of mean Berwald curvature of a twisted product Finsler manifold.

\begin{lem}\label{Lemma2}
Let $(M_1\times_fM_2, F)$ be a twisted product Finsler manifold. Then the coefficients of mean Berwald curvature are as follows:
\begin{eqnarray}
\textbf{E}_{\alpha\beta}\!\!\!\!&=&\!\!\!\!E_{\alpha\beta}+fg_{\alpha\beta}I^hf_h+\frac{1}{2}fI^\nu_{;\alpha;\beta}f_\nu F_2^2
+f^{-1}f_\nu(C^{\gamma\nu}_{\alpha;\beta}v_{\gamma}+I^\nu_{;\alpha}v_\beta+I^\nu_{;\beta}v_\alpha\nonumber\\
\!\!\!\!&&\!\!\!\!+C^\nu_{\alpha\beta}
+I^\nu g_{\alpha\beta}),\label{mean1}\\
\textbf{E}_{ij}\!\!\!\!&=&\!\!\!\!E_{ij}+\frac{1}{2}fI^h_{;j;i}f_hF_2^2,\label{mean2}\\
\textbf{E}_{i\beta
}\!\!\!\!&=&\!\!\!\!fI^h_{;i}f_hv_\beta,\label{mean3}
\end{eqnarray}
where $E_{ij}$ and $E_{\alpha\beta}$ are the coefficients of mean Berwald curvature of $(M_1, F_1)$ and $(M_2, F_2)$, respectively.
\end{lem}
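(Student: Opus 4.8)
The plan is to read off the mean Berwald curvature as the appropriate trace of the Berwald curvature already computed in Lemma~\ref{Berwald}. By the twisted analogue of the defining relation $E_{jk}=\tfrac12 B^m_{\ jkm}$, one has $\textbf{E}_{ab}=\tfrac12\textbf{B}^c_{\ abc}$, and since the contracted index $c$ runs over the whole index set it splits as a Latin trace plus a Greek trace, $\textbf{E}_{ab}=\tfrac12\big(\textbf{B}^k_{\ abk}+\textbf{B}^\gamma_{\ ab\gamma}\big)$. So the entire lemma reduces to substituting the Berwald families from Lemma~\ref{Berwald}, setting the final lower index equal to the upper one, and collecting terms.

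For $\textbf{E}_{ij}$ and $\textbf{E}_{i\beta}$ the Greek trace drops out: by (\ref{4}) we have $\textbf{B}^\gamma_{ij\lambda}=\textbf{B}^\gamma_{i\beta\lambda}=0$, so only the Latin block survives. Taking $l=k$ in (\ref{3}) gives $\textbf{B}^k_{\ ijk}=B^k_{\ ijk}+fC^{kh}_{k;j;i}f_hF_2^2$; since the traced index $k$ is not a differentiation index, $C^{kh}_{k;j;i}=I^h_{;j;i}$ follows at once from $C^{kh}_k=I^h$, and together with $B^k_{\ ijk}=2E_{ij}$ this yields (\ref{mean2}). Taking $l=k$ in the formula $\textbf{B}^k_{i\beta l}=2fC^{kh}_{i;l}f_hv_\beta$ gives $\textbf{B}^k_{\ i\beta k}=2fC^{kh}_{i;k}f_hv_\beta$, which yields (\ref{mean3}) once the contraction identity $C^{kh}_{i;k}f_h=I^h_{;i}f_h$ (discussed below) is in hand.

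For $\textbf{E}_{\alpha\beta}$ both blocks survive. The Latin block, from (\ref{2}) at $l=k$, gives $\textbf{B}^k_{\ \alpha\beta k}=2fg_{\alpha\beta}C^{kh}_kf_h=2fg_{\alpha\beta}I^hf_h$, accounting for the $fg_{\alpha\beta}I^hf_h$ term. The Greek block is the long formula for $\textbf{B}^\gamma_{\alpha\beta\lambda}$ evaluated at $\lambda=\gamma$ and summed: the $M_2$-Berwald part traces to $B^\gamma_{\ \alpha\beta\gamma}=2E_{\alpha\beta}$; the leading term, carried by the overall $f^{-1}$ weight, contributes $\tfrac12 f^{-1}I^\nu_{;\alpha;\beta}f_\nu F_2^2$ via $C^{\gamma\nu}_{\gamma;\alpha;\beta}=I^\nu_{;\alpha;\beta}$; the $v$-weighted terms collapse, using $C^{\gamma\nu}_{\gamma;\beta}=I^\nu_{;\beta}$ and $C^{\gamma\nu}_{\alpha;\gamma}=I^\nu_{;\alpha}$, to the $I^\nu_{;\alpha}v_\beta$, $I^\nu_{;\beta}v_\alpha$ and (carried along) $C^{\gamma\nu}_{\alpha;\beta}v_\gamma$ contributions; and $2C^{\gamma\nu}_\gamma f_\nu g_{\alpha\beta}=2I^\nu f_\nu g_{\alpha\beta}$ gives the $I^\nu g_{\alpha\beta}$ term. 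Finally the three purely algebraic terms $2C^{\gamma\nu}_\alpha f_\nu g_{\gamma\beta}$, $2C^{\gamma\nu}_\beta f_\nu g_{\gamma\alpha}$ and $-2C_{\alpha\beta\gamma}f^\gamma$ must be merged: lowering via $g_{\gamma\beta}C^{\gamma\nu}_\alpha=C^\nu_{\alpha\beta}$ and $C_{\alpha\beta\gamma}f^\gamma=C^\nu_{\alpha\beta}f_\nu$ turns them into $(2+2-2)C^\nu_{\alpha\beta}f_\nu=2C^\nu_{\alpha\beta}f_\nu$, i.e. the single $C^\nu_{\alpha\beta}$ term. Applying the overall $\tfrac12$ then assembles (\ref{mean1}).

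The one step that is not pure bookkeeping is justifying the contraction identities in which the traced index also carries a vertical derivative, namely $C^{kh}_{i;k}f_h=I^h_{;i}f_h$ and $C^{\gamma\nu}_{\alpha;\gamma}f_\nu=I^\nu_{;\alpha}f_\nu$. The companion identities $C^{kh}_{k;j;i}=I^h_{;j;i}$, $C^{\gamma\nu}_{\gamma;\beta}=I^\nu_{;\beta}$ and $C^{\gamma\nu}_{\gamma;\alpha;\beta}=I^\nu_{;\alpha;\beta}$ are immediate, since there the traced index is not differentiated and the $y$- or $v$-derivative simply passes through the sum $\sum_k C^{kh}_k=I^h$ (respectively $\sum_\gamma C^{\gamma\nu}_\gamma=I^\nu$). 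For the coincident case I would invoke the total symmetry of $\textbf{B}$ in its three lower indices, which is inherited from the symmetry $\textbf{G}^c_{ab}=\textbf{G}^c_{ba}$ of the connection coefficients established earlier: this symmetry permits transferring the differentiation index onto the free slot before contracting, so that $\sum_k f_h\,\partial_{y^k}C^{kh}_i=\sum_k f_h\,\partial_{y^i}C^{kh}_k=f_h\,I^h_{;i}$, and analogously on the $M_2$ factor. With these identities secured, the remaining work is routine relabelling and the bookkeeping of the $\tfrac12$ and $f^{\pm1}$ weights.
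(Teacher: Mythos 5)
Your proof is correct and is precisely the paper's own argument made explicit: the paper's entire proof of this lemma is the single line ``By definition and Lemma \ref{Berwald}, we get the proof,'' i.e.\ exactly your trace computation $\textbf{E}_{ab}=\frac{1}{2}\textbf{B}^c_{\ abc}=\frac{1}{2}\big(\textbf{B}^k_{\ abk}+\textbf{B}^\gamma_{\ ab\gamma}\big)$ applied to the coefficients of Lemma \ref{Berwald}, with the contraction identities (including the coincident-index case, which you rightly justify via the total symmetry of $\textbf{B}$ in its lower indices) filled in. The one discrepancy is the paper's, not yours: tracing Lemma \ref{Berwald} faithfully yields the weight $\frac{1}{2}f^{-1}I^\nu_{;\alpha;\beta}f_\nu F_2^2$ in $\textbf{E}_{\alpha\beta}$, exactly as you obtained, whereas (\ref{mean1}) prints $\frac{1}{2}f$, so the stated formula carries a typo ($f$ for $f^{-1}$) that your computation silently corrects.
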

\begin{proof}
By definition and Lemma \ref{Berwald}, we get the proof.
\end{proof}

\bigskip

\begin{thm}
The twisted product Finsler manifold $(M_1\times_fM_2, F)$ is weakly Berwald if and only if $(M_1, F_1)$ is weakly Berwald, $I^hf_h=0$ and the following hold
\be
E_{\alpha\beta}=-\frac{1}{2}fI^\nu_{;\alpha;\beta}f_\nu F_2^2
-f^{-1}f_\nu(C^{\gamma\nu}_{\alpha;\beta}v_{\gamma}+I^\nu_{;\alpha}v_\beta+I^\nu_{;\beta}v_\alpha
+C^\nu_{\alpha\beta}
+I^\nu g_{\alpha\beta}).\label{mean4}
\ee
\end{thm}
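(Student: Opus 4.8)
The statement characterizes when the twisted product $(M_1\times_f M_2, F)$ is weakly Berwald, i.e. when $\textbf{E}_{ab}=0$. Since the mean Berwald curvature $\textbf{E}$ splits into the blocks $\textbf{E}_{ij}$, $\textbf{E}_{i\beta}$ and $\textbf{E}_{\alpha\beta}$ already computed in Lemma \ref{Lemma2}, the entire proof is a direct reading-off from equations (\ref{mean1})--(\ref{mean3}): the manifold is weakly Berwald precisely when each of these three blocks vanishes identically.

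First I would set $\textbf{E}_{i\beta}=0$. By (\ref{mean3}) this reads $fI^h_{;i}f_h v_\beta=0$ for all $v_\beta$, hence $fI^h_{;i}f_h=0$. Since $f>0$ by definition, this gives $I^h_{;i}f_h=0$, and differentiating (or rather, using the $0$-homogeneity structure) one expects $I^h f_h=0$ to be the clean integrated form; I would argue that $I^h_{;i}f_h=0$ together with homogeneity of $I^h$ forces the contracted condition $I^hf_h=0$ stated in the theorem. Next I would impose $\textbf{E}_{ij}=0$. By (\ref{mean2}) this is $E_{ij}+\tfrac12 f I^h_{;j;i}f_h F_2^2=0$. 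Using that $I^h_{;i}f_h=0$ (from the previous step) and differentiating with respect to $y^j$ — noting $f_h$ depends only on the base coordinates, not on the fiber variable $y^j$ — yields $I^h_{;j;i}f_h=0$, so the twisted correction term drops out and the condition collapses to $E_{ij}=0$, i.e. $(M_1,F_1)$ is weakly Berwald. Finally, imposing $\textbf{E}_{\alpha\beta}=0$ in (\ref{mean1}) and substituting $I^hf_h=0$ removes the term $fg_{\alpha\beta}I^hf_h$, leaving exactly the prescribed relation (\ref{mean4}) for $E_{\alpha\beta}$. The converse is immediate: assuming $(M_1,F_1)$ weakly Berwald, $I^hf_h=0$ and (\ref{mean4}), all three blocks vanish by (\ref{mean1})--(\ref{mean3}).

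The step I expect to be the main obstacle is the reduction $I^h_{;i}f_h=0 \Rightarrow I^hf_h=0$: the mean Berwald blocks only directly deliver the derivative condition $I^h_{;i}f_h=0$ (and its second derivative), so one must justify passing to the undifferentiated contraction $I^hf_h=0$ that appears in the statement. This should follow from the homogeneity of $I^h$ in $y$ combined with the fact that $f_h=\partial f/\partial x^h$ is independent of the fiber variables, so that contracting $I^h_{;i}f_h=0$ with $y^i$ and applying Euler's theorem (as used repeatedly in Section 3 for the spray coefficients) recovers $I^h f_h=0$ up to the degree of homogeneity. I would make this explicit, then treat the vanishing of $\textbf{E}_{ij}$ and $\textbf{E}_{\alpha\beta}$ as routine substitutions. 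The whole argument is thus a case analysis over the three curvature blocks, with the only genuine content being the homogeneity reduction in the first block.
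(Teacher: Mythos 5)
Your proposal is correct and takes essentially the same route as the paper: both decompose the weakly Berwald condition into the three blocks of Lemma \ref{Lemma2}, extract $I^h_{;i}f_h=0$ from the mixed block $\textbf{E}_{i\beta}=0$, deduce from it both $I^h_{;j;i}f_h=0$ (differentiating in $y^j$, since $f_h$ is fiber-independent) and $I^hf_h=0$, and then substitute back into $\textbf{E}_{ij}=0$ and $\textbf{E}_{\alpha\beta}=0$ to obtain $E_{ij}=0$ and (\ref{mean4}), with the converse immediate. Your explicit justification of the step $I^h_{;i}f_h=0\Rightarrow I^hf_h=0$ — contracting with $y^i$ and invoking Euler's theorem for the homogeneous (of degree $-1$, not $0$) field $I^h$ — is precisely the detail the paper asserts without proof, so your write-up is, if anything, more complete.
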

\begin{proof}
If $(M_1\times_fM_2)$ be a weakly Berwald manifold, then we have
\[
\textbf{E}_{\alpha\beta}=\textbf{E}_{ij}=\textbf{E}_{i\beta}=0.
\]
Thus by using (\ref{mean3}) we result that $I^h_{;i}f_h=0$. This equation implies that
\[
I^h_{;j;i}f_h=0, \ \ \  \textrm{and} \ \ \ I^hf_h=0.
\]
By setting these equations in (\ref{mean1}) and (\ref{mean2}) we conclude that $E_{ij}=0$ and $E_{\alpha\beta}$ satisfies in (\ref{mean4}).
\end{proof}

\bigskip

Now, if $f$ is constant on $M_2$, then (\ref{mean4}) implies that $E_{\alpha\beta}=0$. Thus we conclude the following.

\begin{cor}
Let $(M_1\times_fM_2, F)$ be a twisted product Finsler manifold and $f$ is a function on $M_1$, only. Then $(M_1\times_fM_2, F)$ is weakly Berwald if and only if
 $(M_1, F_1)$ and $(M_2, F_2)$ are weakly Berwald manifolds and $I^hf_h=0$.
\end{cor}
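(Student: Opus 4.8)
The plan is to obtain this corollary as a direct specialization of the preceding theorem. That theorem characterizes the weakly Berwald property of $(M_1\times_f M_2,F)$ by three simultaneous conditions: $(M_1,F_1)$ is weakly Berwald, the scalar identity $I^h f_h=0$ holds, and the component $E_{\alpha\beta}$ of the mean Berwald curvature of $(M_2,F_2)$ is prescribed by (\ref{mean4}). So the only thing to do is to show that, under the extra hypothesis, (\ref{mean4}) collapses to the vanishing of $E_{\alpha\beta}$, which is exactly the assertion that $(M_2,F_2)$ is weakly Berwald.

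First I would translate the hypothesis. Saying that $f$ is a function on $M_1$ only means $f$ does not depend on the $M_2$-coordinates $u^\alpha$, hence every partial derivative $f_\nu:=\partial f/\partial u^\nu$ vanishes identically (here $\nu$ is an $M_2$ index). This is the same condition invoked in the line preceding the statement, namely that $f$ is constant on $M_2$.

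Next I would inspect the right-hand side of (\ref{mean4}) termwise. Each summand there carries an explicit factor $f_\nu$: the leading term is $-\frac{1}{2}f\,I^\nu_{;\alpha;\beta}f_\nu F_2^2$, while all remaining terms are collected in the single bracket $-f^{-1}f_\nu(\cdots)$ and are thus multiplied by $f_\nu$ as well. Consequently, setting $f_\nu=0$ annihilates the entire right-hand side, so (\ref{mean4}) reduces to $E_{\alpha\beta}=0$, i.e. $(M_2,F_2)$ is weakly Berwald. The retained constraint $I^h f_h=0$ is genuine and must not be dropped, since $h$ is an $M_1$-index and $f_h=\partial f/\partial x^h$ need not vanish when $f$ varies over $M_1$.

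Finally I would assemble the equivalence. In the forward direction, if the product is weakly Berwald then the theorem yields the three conditions, and the reduction just performed turns the third into ``$(M_2,F_2)$ weakly Berwald''. Conversely, if $(M_1,F_1)$ and $(M_2,F_2)$ are both weakly Berwald and $I^h f_h=0$, then $E_{\alpha\beta}=0$ makes (\ref{mean4}) hold trivially, since both sides vanish, so all hypotheses of the theorem are satisfied and the product is weakly Berwald. There is essentially no obstacle here; the single point deserving care is to verify by inspection of (\ref{mean4}) that no surviving term lacks the factor $f_\nu$, and since $f_\nu=0$ kills the whole expression, the conclusion is immediate.
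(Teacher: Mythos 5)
Your proposal is correct and is essentially the paper's own proof: the paper derives this corollary from the preceding theorem in one line, observing that when $f$ is constant on $M_2$ (i.e. $f_\nu=0$) every term on the right-hand side of (\ref{mean4}) carries the factor $f_\nu$, so that condition collapses to $E_{\alpha\beta}=0$. Your explicit verification of both directions of the equivalence, and the remark that $I^hf_h=0$ survives because $f_h$ is an $M_1$-derivative, merely spell out what the paper leaves implicit.
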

Now, we consider twisted product Finsler manifolds with  isotropic mean
Berwald  curvature. It is remarkable that as a consequence of Lemma \ref{Lemma2},
we have the following.
\begin{lem}
Twisted product Finsler manifold $(M_1\times_{f}M_2, F)$ is
isotropic mean Berwald manifold if and only if
\begin{eqnarray}
E_{\alpha\beta}\!\!\!\!&+&\!\!\!\!fg_{\alpha\beta}I^hf_h+\frac{1}{2}fI^\nu_{;\alpha;\beta}f_\nu F_2^2
+f^{-1}f_\nu(C^{\gamma\nu}_{\alpha;\beta}v_{\gamma}+I^\nu_{;\alpha}v_\beta+I^\nu_{;\beta}v_\alpha\nonumber\\
\!\!\!\!&&\!\!\!\!+C^\nu_{\alpha\beta}
+I^\nu g_{\alpha\beta})-\frac{n+1}{2}cf^2F^{-1}(g_{\alpha\beta}-\frac{f^2}{F^2}v_\alpha v_\beta)=0,\label{wmean1}\\
E_{ij}\!\!\!\!&+&\!\!\!\!\frac{1}{2}fI^h_{;j;i}f_hF_2^2-\frac{n+1}{2}cF^{-1}(g_{ij}-\frac{1}{F^2}y_iy_j)=0,\label{wmean2}\\
\!\!\!\!&&\!\!\!\!c(n+1)F^{-3}y_i+fI^h_{;i}f_h=0,\label{wmean3}
\end{eqnarray}
where $c=c(\textbf{x})$ is a scalar function on $M$.
\end{lem}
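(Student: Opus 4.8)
The statement to prove is a characterization: the twisted product Finsler manifold is of isotropic mean Berwald curvature if and only if the three displayed equations hold. The natural approach is to unwind the definition of isotropic mean Berwald curvature in the ambient manifold and match it component-by-component against the formulas for the mean Berwald coefficients already computed in Lemma \ref{Lemma2}. Since all the work of computing $\textbf{E}_{\alpha\beta}$, $\textbf{E}_{ij}$ and $\textbf{E}_{i\beta}$ has been done, the proof is essentially bookkeeping driven by the block structure of the metric \eqref{Mat}.

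First I would recall that $(M_1\times_f M_2,F)$ is isotropic mean Berwald if and only if $\textbf{E}_{ab}=\tfrac{1}{2}(n+1)cF^{-1}\textbf{h}_{ab}$ for some scalar $c=c(\textbf{x})$, where $\textbf{h}_{ab}=\textbf{g}_{ab}-F^{-2}\textbf{y}_a\textbf{y}_b$ is the angular metric. The key is that both $\textbf{E}_{ab}$ and $\textbf{h}_{ab}$ inherit the $(0,2)$ block decomposition coming from \eqref{Mat}: the $i$–$\beta$ mixed block of $\textbf{g}$ vanishes, so the only off-diagonal contribution to $\textbf{h}_{ab}$ comes from the $\textbf{y}_a\textbf{y}_b$ term. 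The plan is therefore to write out the three independent blocks of the tensor identity $\textbf{E}_{ab}-\tfrac{1}{2}(n+1)cF^{-1}\textbf{h}_{ab}=0$ separately.

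Concretely, I would compute the relevant components of $\textbf{h}_{ab}$ from \eqref{Mat}, using $\textbf{y}_i=g_{ij}y^j=y_i$ and $\textbf{y}_\alpha=f^2 g_{\alpha\beta}v^\beta=f^2 v_\alpha$. This gives the $\alpha\beta$-block $\textbf{h}_{\alpha\beta}=f^2 g_{\alpha\beta}-F^{-2}f^4 v_\alpha v_\beta=f^2(g_{\alpha\beta}-f^2 F^{-2}v_\alpha v_\beta)$, the $ij$-block $\textbf{h}_{ij}=g_{ij}-F^{-2}y_iy_j$, and the mixed block $\textbf{h}_{i\beta}=-F^{-2}\textbf{y}_i\textbf{y}_\beta=-F^{-2}f^2 y_i v_\beta$. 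Substituting these together with the expressions \eqref{mean1}, \eqref{mean2}, \eqref{mean3} for $\textbf{E}_{\alpha\beta}$, $\textbf{E}_{ij}$, $\textbf{E}_{i\beta}$ into the three blocks of the defining identity yields exactly \eqref{wmean1}, \eqref{wmean2} and \eqref{wmean3}, where the mixed block, after factoring out the common $fv_\beta$, collapses to \eqref{wmean3}. Conversely, if the three equations hold, reassembling the blocks recovers the single tensorial isotropic-mean-Berwald condition, so the equivalence follows.

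The only mildly delicate point is the mixed $i\beta$-block: one must check that the factor structure on both sides matches so that the stated equation \eqref{wmean3} is genuinely equivalent to the vanishing of that block rather than merely implied by it. This is handled by observing that $\textbf{E}_{i\beta}=fI^h_{;i}f_h v_\beta$ and $\textbf{h}_{i\beta}=-F^{-2}f^2 y_i v_\beta$ both carry a common $v_\beta$ factor, and after cancelling it one is left with a scalar identity in the index $i$; comparing the coefficient of $v_\beta$ and using $\textbf{y}_i=y_i$ reproduces \eqref{wmean3} up to the normalization $c(n+1)F^{-3}y_i$. Since these are the only blocks (the $\alpha\beta$, $ij$ and $i\beta$ components exhaust the independent entries of a symmetric $(0,2)$ tensor on the product), the three equations are jointly equivalent to the isotropic mean Berwald condition, completing the proof.
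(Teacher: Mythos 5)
Your proposal is correct and is essentially the paper's own (implicit) argument: the paper offers no proof beyond invoking Lemma \ref{Lemma2}, and the intended reasoning is exactly your block-by-block comparison of $\textbf{E}_{ab}=\frac{n+1}{2}cF^{-1}\textbf{h}_{ab}$ using $\textbf{h}_{ij}=g_{ij}-F^{-2}y_iy_j$, $\textbf{h}_{\alpha\beta}=f^2\bigl(g_{\alpha\beta}-f^2F^{-2}v_\alpha v_\beta\bigr)$ and $\textbf{h}_{i\beta}=-f^2F^{-2}y_iv_\beta$. One remark: your mixed-block computation actually yields $\frac{n+1}{2}cf^2F^{-3}y_i+fI^h_{;i}f_h=0$, so the stated equation (\ref{wmean3}), which reads $c(n+1)F^{-3}y_i+fI^h_{;i}f_h=0$, drops a factor of $\frac{f^2}{2}$ relative to the normalization kept in (\ref{wmean1}); this discrepancy is an inaccuracy of the paper rather than a gap in your proof, and it is harmless for the subsequent application, where only the conclusion $c=0$ is used.
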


\bigskip

\begin{thm}
Every twisted product Finsler manifold $(M_1\times_{f}M_2, F)$
with isotropic mean Berwald curvature is a weakly Berwald
manifold.
\end{thm}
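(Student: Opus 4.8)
The plan is to reduce the whole statement to the single claim that the scalar function $c=c(\textbf{x})$ attached to the isotropic mean Berwald condition vanishes identically. Once $c\equiv 0$, the defining relation $\textbf{E}=\frac{n+1}{2}cF^{-1}\textbf{h}$ forces $\textbf{E}=0$, which is exactly the weakly Berwald property. By the preceding lemma, the hypothesis of isotropic mean Berwald curvature is equivalent to the three identities (\ref{wmean1})--(\ref{wmean3}), so it is enough to extract $c=0$ from one of them. The mixed equation (\ref{wmean3}) is the natural candidate, because it is the most rigid: it couples the full metric $F$ against quantities living on $M_1$ alone.

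First I would contract (\ref{wmean3}) with $y^i$. Here the mean Cartan torsion $I^h=I^h(x,y)$ of $(M_1,F_1)$ is positively homogeneous of degree $-1$ in $y$ (since $C_{ijk}$ is of degree $-1$ and raising/lowering indices by $g$ is of degree $0$), so Euler's theorem gives $I^h_{;i}y^i=-I^h$; combined with $y_iy^i=g_{ij}y^iy^j=F_1^2$, the contraction collapses (\ref{wmean3}) to the scalar identity
\[
c(n+1)F^{-3}F_1^2=fI^hf_h .
\]
The point to emphasize is the structure of the right-hand side: $I^h$, $f$, and $f_h=\partial f/\partial x^h$ are all functions of $(x,u,y)$ only and carry \emph{no} dependence on the $M_2$-fibre coordinate $v$.

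The decisive step is then a dependence argument. Fix a base point $(x,u)$ and a direction $y\neq 0$. On the left-hand side the factors $c$ and $F_1^2$ are independent of $v$, whereas $F^{-3}=(F_1^2+f^2F_2^2)^{-3/2}$ is a genuinely nonconstant function of $v$, because $f>0$ and $F_2^2(v)$ sweeps out a range of positive values as $v$ varies over $T_uM_2^{\circ}$. Hence the left-hand side can equal the $v$-independent right-hand side only if $c(n+1)F_1^2=0$; since $n+1>0$ and $F_1^2>0$ for $y\neq 0$, this yields $c(x,u)=0$. As $(x,u)$ was arbitrary, $c\equiv 0$ on $M$, and therefore $\textbf{E}=0$, so $(M_1\times_f M_2,F)$ is weakly Berwald.

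I expect the only genuine obstacle to be the clean justification of this last move — establishing that $F^{-3}$ truly varies with $v$ so that the identity cannot balance unless $c$ vanishes — rather than any lengthy computation. The homogeneity reduction and the contraction with $y^i$ are routine once the degree of $I^h$ is pinned down, so the argument is short; the care is entirely in the final separation of the $v$-dependent and $v$-independent parts.
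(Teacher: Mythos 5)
Your proposal is correct. It rests on the same key lemma as the paper (the characterization of isotropic mean Berwald twisted product metrics by equations (\ref{wmean1})--(\ref{wmean3})) and attacks the same equation (\ref{wmean3}); the underlying insight --- that $fI^h_{;i}f_h$ carries no $v$-dependence while $F^{-3}$ does --- is also exactly what the paper exploits. Where you differ is in how $c=0$ is extracted. The paper simply differentiates (\ref{wmean3}) with respect to $v^\gamma$: the $v$-independent term is annihilated and one is left with $c(n+1)f^2F^{-5}v_\gamma y_i=0$, whence $c=0$ at once. You instead first contract with $y^i$ (using the degree $-1$ homogeneity of $I^h$ and Euler's theorem to get $I^h_{;i}y^i=-I^h$ and $y_iy^i=F_1^2$), obtaining the scalar identity $c(n+1)F^{-3}F_1^2=fI^hf_h$, and then run a functional-dependence argument: $F^{-3}=(F_1^2+f^2F_2^2)^{-3/2}$ is genuinely nonconstant along the $M_2$-fibre (e.g.\ under the scaling $v\mapsto\lambda v$, which sweeps $F_2^2$ over all of $(0,\infty)$), while the right-hand side is fibre-constant, forcing $c(n+1)F_1^2=0$ and hence $c=0$. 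Both arguments are valid; the paper's differentiation is shorter and purely mechanical, while yours makes the separation-of-dependence idea explicit at the cost of having to justify the nonconstancy of $F^{-3}$ in $v$, which you do correctly via homogeneity of $F_2$. Note also that your contraction step is, strictly speaking, dispensable: the same dependence argument could be run on (\ref{wmean3}) directly, or one can differentiate your contracted identity in $v$ to recover the paper's one-line computation.
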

\begin{proof}
Suppose that $F$ is isotropic mean Berwald twisted product Finsler
metric. Then differentiating (\ref{wmean3}) with respect
$v^\gamma$ gives us
\[
c(n+1)f^2F^{-5}v_\gamma y_i=0
\]
Thus, we conclude that $c=0$. This implies that $F$  reduces to a  weakly
Berwald metric.
\end{proof}

%----------------------------------------------------------------------------------
\section{Locally Dually Flat Twisted Product Finsler Manifolds}
%-------------------------------------------------------------------------
In \cite{amna}, Amari-Nagaoka introduced the notion of dually flat Riemannian metrics when they study the information geometry on Riemannian manifolds. Information geometry has emerged from investigating the geometrical structure of a family of probability distributions and has been applied successfully to various areas including statistical inference, control system theory and multi-terminal information theory. In Finsler geometry, Shen extends the notion of locally dually flatness for Finsler metrics \cite{shenLec}. Dually flat Finsler metrics form a special and valuable class of Finsler metrics in Finsler information geometry, which play a very   important role in studying flat Finsler information structure \cite{CSZ}\cite{X2}.

In this section, we study locally dually flat  twisted product
Finsler metrics. It is remarkable that, a Finsler metric
$F=F(\textbf{x},\textbf{y})$ on a manifold $M$ is said to be
locally dually flat if at any point there is a standard coordinate
system
 $(\textbf{x}^a,\textbf{y}^a)$ in $TM$ such that it  satisfies
\be \frac{\partial^2 F^2}{\partial \textbf{x}^b \partial
\textbf{y}^a}\ \textbf{y}^b=2\ \frac{\partial F^2}{\partial
\textbf{x}^a}.\label{100} \ee In this case, the coordinate
$(\textbf{x}^a)$ is called an adapted local coordinate system. By
using (\ref{100}) we can obtain the following lemma.
\begin{lem}
Let $(M_1\times_{f}M_2, F)$ be a twisted product Finsler manifold.
Then $F$ is locally dually flat if and only if $F_1$ and $F_2$
satisfy in the following equations
\begin{eqnarray}
&&\frac{\partial^2F_1^2}{\partial x^k\partial
y^l}y^k=2\frac{\partial F_1^2}{\partial
x^l}+4ff_lF_2^2,\label{Im1}\\
&&4f_kv_\beta y^k+f\frac{\partial^2F_2^2}{\partial u^\alpha\partial
v^\beta}v^\alpha+4f_\alpha v_\beta v^\alpha= 2f\frac{\partial
F_2^2}{\partial u^\beta}+4f_\beta F_2^2. \label{Im2}
\end{eqnarray}
\end{lem}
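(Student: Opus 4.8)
The plan is to insert the explicit form $F^2=F_1^2(x,y)+f^2(x,u)F_2^2(u,v)$ into the dual-flatness criterion (\ref{100}) and then separate the free index $a$ into its two blocks, $a=l\in\{1,\dots,n_1\}$ and $a=\beta\in\{1,\dots,n_2\}$, in each case splitting the contracted index $b$ (and hence $\textbf{y}^b$) over the $M_1$-directions $y^k$ and the $M_2$-directions $v^\gamma$. The entire argument rests on the dependence structure $F_1=F_1(x,y)$, $F_2=F_2(u,v)$ and $f=f(x,u)$: in particular $F_1$ is independent of $(u,v)$, $F_2$ is independent of $(x,y)$, and $f$ is independent of the fibre coordinates $(y,v)$. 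First I would record the first derivatives $\partial F^2/\partial x^l=\partial F_1^2/\partial x^l+2ff_lF_2^2$ and $\partial F^2/\partial u^\beta=2ff_\beta F_2^2+f^2\,\partial F_2^2/\partial u^\beta$, together with the fibre derivatives $\partial F^2/\partial y^l=\partial F_1^2/\partial y^l$ and $\partial F^2/\partial v^\beta=f^2\,\partial F_2^2/\partial v^\beta$.

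For the block $a=l$, differentiating $\partial F^2/\partial y^l=\partial F_1^2/\partial y^l$ once more kills every $M_2$-contribution: the $b=\gamma$ term vanishes because $F_1$ does not depend on $u^\gamma$, so the left-hand side of (\ref{100}) collapses to $(\partial^2F_1^2/\partial x^k\partial y^l)\,y^k$. Matching this against $2\,\partial F^2/\partial x^l=2\,\partial F_1^2/\partial x^l+4ff_lF_2^2$ produces exactly (\ref{Im1}).

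The block $a=\beta$ is the computationally substantial one. Starting from $\partial F^2/\partial v^\beta=f^2\,\partial F_2^2/\partial v^\beta$, the $b=k$ and $b=\gamma$ pieces of (\ref{100}) give, respectively, $2ff_k(\partial F_2^2/\partial v^\beta)y^k$ and $2ff_\gamma v^\gamma(\partial F_2^2/\partial v^\beta)+f^2(\partial^2F_2^2/\partial u^\gamma\partial v^\beta)v^\gamma$. The step that puts these into the stated shape is the homogeneity identity $\partial F_2^2/\partial v^\beta=2g_{\beta\gamma}v^\gamma=2v_\beta$, which follows from Euler's theorem applied to the $2$-homogeneous function $F_2^2$. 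Substituting this, equating with $2\,\partial F^2/\partial u^\beta=4ff_\beta F_2^2+2f^2\,\partial F_2^2/\partial u^\beta$, and dividing through by $f>0$ yields (\ref{Im2}). Since (\ref{100}) must hold for every index $a$, and its $a=l$ and $a=\beta$ instances are equivalent to (\ref{Im1}) and (\ref{Im2}) respectively, the stated equivalence follows in both directions. The only real obstacle is the bookkeeping: deciding which mixed partials drop out under the dependence structure, and recognising the identity $\partial_{v^\beta}F_2^2=2v_\beta$ that converts the bare derivative of $F_2^2$ into $v_\beta$ so that the factor of $f$ can be cancelled.
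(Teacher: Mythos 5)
Your proof is correct and follows exactly the route the paper intends (the paper merely states the lemma follows "by using" the criterion (\ref{100})): substitute $F^2=F_1^2+f^2F_2^2$ into the dual-flatness equation, split the free index into the $M_1$- and $M_2$-blocks, and use the dependence structure together with the identity $\partial F_2^2/\partial v^\beta=2v_\beta$ to reduce the two blocks to (\ref{Im1}) and (\ref{Im2}). No gaps; the bookkeeping and the cancellation of the factor $f>0$ are handled correctly.
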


Now, let $F$ be a locally dually flat Finsler metric. Taking
derivative with respect to $v^{\gamma}$ from (\ref{Im1}) yields $f_l=0$, which means that $f$ is a constant function on $M_1$. In
this case, the relations (\ref{Im1}) and (\ref{Im2}) reduce to the following
\begin{equation}
\frac{\partial^2F_1^2}{\partial x^k\partial
y^l}y^k=2\frac{\partial F_1^2}{\partial x^l},\label{Im3}
\end{equation}
\begin{equation}
f\frac{\partial^2F_2^2}{\partial u^\alpha\partial
v^\beta}v^\alpha+4f_\alpha v_\beta v^\alpha=2f\frac{\partial
F_2^2}{\partial u^\beta}+4f_\beta F_2^2. \label{Im4}
\end{equation}
By (\ref{Im3}),  we deduce that $F_1$ is locally dually flat.

Now, we assume that $F_1$ and $F_2$ are locally dually flat Finsler
metrics. Then we have
\begin{eqnarray}
&&\frac{\partial^2F_1^2}{\partial x^k\partial
y^l}y^k=2\frac{\partial F_1^2}{\partial x^l},\label{IM115}\\
&&\frac{\partial^2F_2^2}{\partial u^\alpha\partial
v^\beta}v^\alpha=2\frac{\partial F_2^2}{\partial
u^\beta}.\label{Im5}
\end{eqnarray}
By (\ref{IM115}) and (\ref{Im5}),  we derive that (\ref{Im1}) and (\ref{Im2})
are hold if and only if the following hold
\begin{equation}
f_l=0, \ \ \ \  \ f_\alpha v_\beta v^\alpha=f_\beta F_2^2. \label{Im6}
\end{equation}
Therefore we can conclude the following.
\begin{thm}\label{THM1}
Let $(M_1\times_{f}M_2, F)$ be a twisted product Finsler
manifold.\\
(i)\ If $F$ is locally dually flat then $F_1$ is locally dually
flat, $f$ is a function with respect $(u^\alpha)$ only and $F_2$
satisfy in (\ref{Im4}).\\
(ii) If $F_1$ and $F_2$ are locally dually flat. Then $F$ is
locally dually flat if and only if $f$ is a function with respect
$(u^\alpha)$ only and $F_2$ satisfies in (\ref{Im6}).
\end{thm}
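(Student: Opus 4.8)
The plan is to invoke the characterization lemma proved just above, which asserts that $F$ is locally dually flat if and only if the twisting function and the two factor metrics satisfy (\ref{Im1}) and (\ref{Im2}). Both parts of the theorem then reduce to algebraic manipulation of these two identities, combined with the dual-flatness criterion $(F_i^2)_{x^ky^l}y^k=2(F_i^2)_{x^l}$ of Shen for the factors $F_1$ and $F_2$.

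For part (i), I would start from the hypothesis that $F$ is locally dually flat, so that (\ref{Im1}) and (\ref{Im2}) hold. The decisive step is to differentiate (\ref{Im1}) with respect to the fibre coordinate $v^\gamma$ of the second factor. Since the left-hand side and the term $2\,\partial F_1^2/\partial x^l$ involve only the coordinates $(x,y)$ of $M_1$, and since $f$ together with $f_l=\partial f/\partial x^l$ depend only on $(x,u)$, the only surviving contribution comes from the factor $F_2^2$, giving $4ff_l\,\partial F_2^2/\partial v^\gamma=0$. Because $f>0$ and, by positive $1$-homogeneity, $v^\gamma\,\partial F_2^2/\partial v^\gamma=2F_2^2>0$ forces $\partial F_2^2/\partial v^\gamma$ not to vanish identically, I can conclude $f_l=0$; that is, $f$ is a function of $(u^\alpha)$ only. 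Feeding $f_l=0$ back into (\ref{Im1}) collapses it to (\ref{Im3}), which is precisely the dual-flatness condition for $F_1$, while (\ref{Im2}) simplifies to (\ref{Im4}), completing the first statement.

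For part (ii), I would assume that $F_1$ and $F_2$ are both locally dually flat, i.e. that (\ref{IM115}) and (\ref{Im5}) hold, and again test the characterization (\ref{Im1})--(\ref{Im2}). Substituting (\ref{IM115}) into (\ref{Im1}) cancels the $F_1$-terms and leaves $4ff_lF_2^2=0$, hence $f_l=0$; substituting (\ref{Im5}) into (\ref{Im2}), with $f_k=0$ now known, cancels the $F_2$-terms and leaves $4f_\alpha v_\beta v^\alpha=4f_\beta F_2^2$. Thus, granted dual flatness of the two factors, the conditions (\ref{Im1})--(\ref{Im2}) are equivalent to (\ref{Im6}), which is exactly the claimed criterion.

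The computations here are short, so the point requiring the most care is the non-degeneracy argument in part (i): one must justify that $ff_l\,\partial F_2^2/\partial v^\gamma=0$ genuinely forces $f_l=0$, rather than merely holding on a thin subset of the slit tangent bundle. This is where the positive homogeneity of $F_2^2$ is essential, and it is the only step that is not pure bookkeeping.
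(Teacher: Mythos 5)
Your proposal is correct and follows essentially the same route as the paper: both reduce the theorem to the characterization (\ref{Im1})--(\ref{Im2}), obtain $f_l=0$ in part (i) by differentiating (\ref{Im1}) with respect to $v^\gamma$, and in part (ii) substitute the dual-flatness identities (\ref{IM115}) and (\ref{Im5}) to see that (\ref{Im1})--(\ref{Im2}) collapse to (\ref{Im6}). The only difference is that you spell out the non-degeneracy step (contracting $\partial F_2^2/\partial v^\gamma$ with $v^\gamma$ and using homogeneity to force $f_l=0$), which the paper asserts without justification; that is a welcome but minor sharpening of the same argument.
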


\bigskip

By Theorem \ref{THM1}, we conclude the following.
\begin{cor}
There is not exist  any locally dually flat proper twisted product Finsler manifold.
\end{cor}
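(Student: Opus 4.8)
The plan is to obtain the corollary directly from part (i) of Theorem~\ref{THM1}; essentially all of the analytic work is already contained there, and what remains is to read its conclusion correctly. First I would recall the key consequence of local dual flatness established just before that theorem: differentiating the dual-flatness equation (\ref{Im1}) with respect to a fibre variable $v^\gamma$, and using that $\partial F_2^2/\partial v^\gamma=2v_\gamma$ does not vanish identically while $f>0$, forces $f_l=\partial f/\partial x^l=0$ at every point. Thus if $F$ is locally dually flat then $f$ is independent of the first factor, i.e. $f=f(u)$ is a function of the $M_2$-coordinates only.

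Next I would make precise why this rules out a \emph{proper} twisted product. When $f_i\equiv 0$ the twisting function no longer couples the two factors: since $f$ does not depend on the fibre momenta $v$, the quantity $\widetilde{F}_2:=fF_2$ is still positively $1$-homogeneous in $v$ and has positive definite fundamental tensor $f^2 g_{\alpha\beta}$, so it is again a Finsler metric on $M_2$. Hence $F^2=F_1^2+f^2F_2^2=F_1^2+\widetilde{F}_2^{\,2}$ is precisely the direct product metric of $(M_1,F_1)$ and $(M_2,\widetilde{F}_2)$. A proper twisted product, by contrast, is one in which $f$ genuinely depends on $M_1$ (so that it reduces neither to a direct nor to a warped product); a metric with $f_i\equiv 0$ therefore degenerates and is not proper.

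Putting these together completes the argument: were $(M_1\times_f M_2,F)$ simultaneously locally dually flat and a proper twisted product, Theorem~\ref{THM1}(i) would give $f_i\equiv 0$, contradicting properness. Consequently no locally dually flat proper twisted product Finsler manifold exists.

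I do not anticipate any serious obstacle, as the entire analytic content is packaged in Theorem~\ref{THM1}. The only points requiring care are fixing the meaning of ``proper'' and checking the elementary fact that $fF_2$ remains a genuine Finsler metric once $f$ is independent of $v$, so that the case $f_i\equiv 0$ really does collapse to a direct product and not to some other nondegenerate twisted structure.
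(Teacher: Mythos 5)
Your proposal is correct and follows essentially the same route as the paper: the paper likewise obtains the corollary directly from Theorem~\ref{THM1}(i), whose content is exactly the step you reproduce (differentiating (\ref{Im1}) in $v^\gamma$ to force $f_l=0$, so $f$ depends on the $M_2$-coordinates only). Your additional observation that one may then absorb $f$ into $F_2$ via $\widetilde{F}_2=fF_2$, so the metric collapses to a genuine direct product and cannot be a proper twisted product, merely makes explicit the final implication that the paper leaves implicit.
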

%--------------------------------------------------------------------------------------------------------------

%-------------------------------------------------------------------------------------------------------------
\noindent
Esmail Peyghan and Leila Nourmohammadi Far\\
Faculty  of Science, Department of Mathematics\\
Arak University\\
Arak 38156-8-8349,  Iran\\
Email: epeyghan@gmail.com

\bigskip

\noindent
Akbar Tayebi\\
Faculty  of Science, Department of Mathematics\\
University of Qom \\
Qom, Iran\\
Email: akbar.tayebi@gmail.com


\begin{thebibliography}{MaHo}
%--------------------------------------------------------------------------------------------------------------
%----------------------------------------------------------------------------------
\bibitem{amna} S.-I. Amari and H. Nagaoka, {\it Methods of Information Geometry}, AMS Translation of Math. Monographs,  Oxford University Press, 2000.
%--------------------------------------------------------------------------------------------------------------
\bibitem {BCS} D. Bao, S.S. Chern and Z. Shen, {\it An Introduction to Riemannian-Finsler Geometry},
Spinger-Verlag,  2000.
%-------------------------------------------------------------------------------------------------------------
\bibitem{BEE} J. K. Beem, P. E. Ehrlich and K. L. Easley, {\it Global Lorentzian Geometry}, Marcel Dekker,
Second Edition, NewYork, 1996.
%-------------------------------------------------------------------------------------------------------------
\bibitem{Be} L. Berwald, {\it \"{U}ber Parallel\"{u}bertragung in R\"{a}umen mit allgemeiner Massbestimmung}, Jber. Deutsch. Math.-Verein. {\bf 34}(1926), 213-220.
%--------------------------------------------------------------------------------------------------------------
%\bibitem{BP} J. B. Beem and T. G. Powell, {\it Geodesic completeness and maximality in Lorentzian warped
%products}, Tensor (N. S.) {\bf 39}, 1982, 31-36.
%--------------------------------------------------------------------------------------------------------------
%\bibitem{Bi} R. L. Bishop and B. O'Nill, {\it Manifolds of negative curvature}, Trans, Amer. Math. Soc. {\bf 145} (1969) 1-49.
%---------------------------------------------------------------------------------------------------------------
\bibitem{C} B. Y. Chen, {\it Geometry of submanifolds and its application}, Tokyo: Science University of Tokyo,
 $III$, 1981.
%------------------------------------------------------------------------------------------------------------
\bibitem{C1} B. Y. Chen, {\it Twisted product CR-submanifolds in K\"{a}hler manifolds}, Tamsui Oxf. J. Math. Sci. {\bf 16}(2000), 105-121.
%-------------------------------------------------------------------------------------------------------------
\bibitem{CDVA} B. Y. Chen, F. Dillen, L. Verstraelen and L. Vranken, {\it Lagrangian isometric immersions
of a real-space-form $M^n(c)$ into a complex-space-form $M^n(4c)$}, Math. Proc. Cambridge
Philos. Soc. {\bf 124}(1998), 107-125.
%--------------------------------------------------------------------------------------------------------------
\bibitem{ChSh} X. Chen and Z. Shen, {\it  On Douglas Metrics},  Publ. Math. Debrecen.  \textbf{66}(2005), 503-512.
%--------------------------------------------------------------------------------------------------------------
\bibitem{CSZ} X. Cheng, Z. Shen and Y. Zhou, {\it On a class of locally dually flat Finsler metrics}, Int. J. Math. {\bf 21}(11) (2010), 1-13.
%--------------------------------------------------------------------------------------------------------------
\bibitem{GM} G. Ganchev and V. Mihova, {\it Riemannian manifolds of quasi-constant sectional curvatures},
J. Reine Angew. Math. {\bf 522}(2000), 119-141.
%--------------------------------------------------------------------------------------------------------------
\bibitem{Ich} Y. Ichijy\={o}, {\it  Finsler spaces modeled on a Minkowski space},  J. Math. Kyoto Univ. {\bf 16}(1976), 639-652.
%--------------------------------------------------------------------------------------------------------------
\bibitem{K} N. Koike, {\it The decomposition of curvature netted hypersurfaces}, Geom. Dedicata. {\bf 54}(1995), 1-11 .
%--------------------------------------------------------------------------------------------------------------
\bibitem{Koz} L. Kozma, I. R. Peter and H. Shimada, {\it On the twisted product of
Finsler manifolds}, Rep. Math. Phys. {\bf 57}(2006), 375-383.
%--------------------------------------------------------------------------------------------------------------
\bibitem{LR} M. Lohnherr and H. Reckziegel, {\it On ruled real hypersurfaces in complex space forms},
Geom. Dedicata, {\bf 74}(1999), 267-286.
%------------------------------------------------------------------------------------------------------------------
\bibitem{M3} M. Matsumoto,  {\it On Finsler spaces with Randers metric and special forms of important tensors}, J. Math. Kyoto Univ. {\bf 14}(1974), 477-498.
%------------------------------------------------------------------------------------------------------------------
 \bibitem{MH} M. Matsumoto and S. H\={o}j\={o}, {\it A conclusive theorem for C-reducible Finsler spaces}, Tensor. N. S. {\bf 32}(1978), 225-230.
%------------------------------------------------------------------------------------------------------------------
\bibitem{Mat4} M. Matsumoto and  C. Shibata,  {\it On semi-C-reducibility, T-tensor and S4-1ikeness of Finsler spaces}, J. Math. Kyoto Univ. {\bf 19}(1979), 301-314.
%--------------------------------------------------------------------------------------------------------------
\bibitem{O} B. O'Neill, {\it Semi-Riemannian Geometry with Applications to Relativity}, New York:
Academic Press, 1983.
%--------------------------------------------------------------------------------------------------------------
\bibitem{NST2} B. Najafi, Z. Shen and A. Tayebi, {\it Finsler metrics of scalar flag curvature with special non-Riemannian curvature properties}, Geom. Dedicata. {\bf 131}(2008), 87-97.
%---------------------------------------------------------------------------------------------------------
\bibitem{PH} E. Peyghan and A. Heydari, {\it Conformal vector fields on tangent bundle of a Riemannian manifold},  J. Math. Analysis. Appl. {\bf 347}(1) (2008),  136-142.
%---------------------------------------------------------------------------------------------------------
\bibitem{PTN}  E. Peyghan, A. Tayebi and B. Najafi, {\it Doubly warped product Finsler manifolds with some non-Riemannian curvature properties},  Ann. Polonici. Math, {\bf 105}(2012), 293-311.
%-------------------------------------------------------------------------------------------------------------------
\bibitem{PR} R. Ponge and H. Reckziegel, {\it Twisted products in pseudo-Riemannian geometry}, Geom. Dedicata, {\bf 48}(1993), 15-25.
%--------------------------------------------------------------------------------------------------------------
\bibitem{ShDiff} Z. Shen, {\it Differential Geometry of Spray and Finsler Spaces}, Kluwer Academic Publishers,  2001.
%-----------------------------------------------------------------------------------
\bibitem{shenLec} Z. Shen, {\it Riemann-Finsler geometry with applications to information geometry}, Chin. Ann. Math. {\bf 27}(2006), 73-94.
%--------------------------------------------------------------
\bibitem{TN} A. Tayebi and B. Najafi, {\it On isotropic Berwald metrics}, Ann. Polonici. Math. {\bf 103}(2012), 109-121.
%--------------------------------------------------------------
\bibitem{TP1} A. Tayebi and E. Peyghan, {\it  Special Berwald metrics},  Sym. Int. Geom: Meth. Appl, (SIGMA),  {\bf 6}(2010),  008, 9 pages.
%---------------------------------------------------------------------------------------------------------
\bibitem{TP} A. Tayebi and E. Peyghan, {\it On Ricci tensors of Randers metrics}, J. Geom. Phys. {\bf 60}(2010), 1665-1670.
%--------------------------------------------------------------
\bibitem{X2}  Q. Xia, {\it On a class of locally dually flat Finsler metrics of isotropic flag curvature}, Publ. Math. Debrecen. {\bf 78}(1) (2011), 169-190.
%-------------------------------------------------------------------------------------------------------------
\end{thebibliography}
\end{document}